\newtheorem{theorem}{Theorem}[section]
\newtheorem{lemma}[theorem]{Lemma}
\newtheorem{proposition}[theorem]{Proposition}
\newtheorem{corollary}[theorem]{Corollary}
\theoremstyle{definition}
\newtheorem{example}[theorem]{Example}
\newtheorem{remark}[theorem]{Remark}
\newtheorem{definition}[theorem]{Definition}
\newtheorem*{definition*}{Definition}
\newtheorem*{theorem*}{Theorem}
\newtheorem*{proposition*}{Proposition}
\newtheorem*{corollary*}{Corollary}
\newtheorem{question}[theorem]{Question}
\renewcommand{\S}{\mathfrak{S}}
\newcommand{\id}{\mathrm{id}}
\newcommand{\Z}{\mathbb{Z}}
\newcommand{\Ind}{\mathrm{Ind}}
\newcommand{\R}{\mathbb{R}}
\newcommand{\Supp}{\mathrm{Supp}}
\newcommand{\htt}{\mathrm{ht}}
\DeclareMathOperator{\spn}{span}
\author{Christian Gaetz}
\thanks{C.G. is supported by a National Science Foundation Graduate Research Fellowship under Grant No. 1122374.}
\address{Department of Mathematics, Massachusetts Institute of Technology, Cambridge, MA 02139}
\email{\href{mailto:gaetz@mit.edu}{{\tt gaetz@mit.edu}}}
\author{Yibo Gao}
\email{\href{mailto:gaoyibo@mit.edu}{{\tt gaoyibo@mit.edu}}}
\begin{document}
\title{Separable Elements in Weyl Groups}
\date{\today}

\begin{abstract}
We define the notion of a \emph{separable} element in a finite Weyl group, generalizing the well-studied class of separable permutations.  We prove that the upper and lower order ideals in weak Bruhat order generated by a separable element are rank-symmetric and rank-unimodal, and that the product of their rank generating functions gives that of the whole group, answering an open problem of Fan Wei.  We also prove that separable elements are characterized by pattern avoidance in the sense of Billey and Postnikov.
\end{abstract}
\maketitle

\section{Introduction and Preliminaries} \label{sec:intro}
A permutation is \emph{separable} if it avoids the patterns 3142 and 2413.  This well-studied class of permutations arose in the study of pop-stack sorting \cite{Avis} and has found applications in algorithmic pattern matching \cite{Bose} and bootstrap percolation \cite{Shapiro}.  These permutations have a remarkable recursive combinatorial structure and are enumerated by the Schr\"{o}der numbers \cite{West}. 

Fan Wei \cite{Wei2012Product} showed that if $w$ is separable, the weak Bruhat intervals $[\id,w]$ and $[w,w_0]$ are rank-symmetric and rank unimodal, and the product of their rank generating functions is $[n]_q !$, the $q$-analog of the factorial of $n$, which is the rank generating function of the weak (or strong) Bruhat order on the symmetric group.  

In this paper we define separable elements of any finite Weyl group $W$ (see Definition \ref{def:separable}) which coincide exactly with separable permutations in the case of the symmetric group.  These elements also have a recursive combinatorial structure and can also be characterized by pattern avoidance, now in the generalized sense of Billey and Postnikov \cite{Billey2005Smoothness} (see Theorem \ref{thm:pattern-classification}).  We solve an open problem of Wei \cite{Wei2012Product} by showing in Theorem \ref{thm:main} that for $w$ separable the weak order intervals $[e,w]$ and $[w,w_0]$ are rank-symmetric and rank-unimodal and that 
\[
\left( \sum_{u \in [e,w]} q^{\ell(u)} \right) \left( \sum_{u \in [w,w_0]} q^{\ell(u)-\ell(w)} \right) = \sum_{u \in W} q^{\ell(u)}.
\]
A second paper \cite{second-separable} establishes further algebraic and combinatorial properties of separable elements.

The remainder of Section \ref{sec:intro} recalls some necessary background on posets, root systems, and Weyl groups.  Section \ref{sec:w0J} discusses the special case of the longest element $w_0^J$ in a parabolic quotient.  Section \ref{sec:defs-and-main-result} introduces the general notion of a \textit{separable} element in an arbitrary finite Weyl group, which coincides with the definition of a separable permutation in type $A$. In Section \ref{sec:defs-and-main-result}, we present our first main result, which answers an open problem of Fan Wei by generalizing Wei's theorem \cite{Wei2012Product} to other types; Section \ref{sec:proof-of-main-thm} contains the proof of this theorem.  Section \ref{sec:pattern-avoidance} states that separable permutations are characterized by pattern avoidance in the sense of Billey and Postnikov \cite{Billey2005Smoothness}, and this Theorem is proven in Section \ref{sec:proof-of-pattern-thm}.

\subsection{Rank functions of posets}
Let $P$ be a finite ranked poset with rank decomposition $P_0\sqcup P_1\sqcup\cdots\sqcup P_r$. Define its \textit{rank generating function}, denoted $F(P)$, to be $F(P):=\sum_{i=0}^r |P_i| \cdot q^i \in \Z[q]$. 

We say that a sequence $a_0,\ldots,a_r$ is \textit{symmetric} if $a_i=a_{r-i}$ for all $i=0,\ldots,r$ and is \textit{unimodal} if there exists $m$ such that 
\[
a_0\leq\cdots\leq a_{m-1}\leq a_m\geq a_{m+1}\geq\cdots\geq a_r.
\]
Similarly, we say that a polynomial $f=\sum_{i=0}^r a_iq^i$ is \textit{symmetric} (sometimes called \textit{palindromic}) if the sequence $a_0,\ldots,a_r\neq0$ is symmetric, and is \textit{unimodal} if this sequence is unimodal. And we say a poset $P$ is \textit{rank-symmetric} if the sequence $|P_0|,|P_1|,\ldots,|P_r|$ is symmetric, and is \textit{rank-unimodal} if this sequence is unimodal.

The following simple lemma can be found in \cite{Stanley1989Log}.
\begin{lemma}\label{lem:sym}
Let $f,g\in\Z[q]$ be two polynomials with nonnegative coefficients. If both are symmetric and unimodal, then $fg$ is symmetric and unimodal.
\end{lemma}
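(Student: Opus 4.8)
The plan is to treat the two assertions separately, with symmetry immediate and unimodality requiring a small structural decomposition. For \emph{symmetry of $fg$}: since $f$ is symmetric it has nonzero constant term, and writing $d=\deg f$, symmetry is exactly the identity $f(q)=q^{d}f(1/q)$; likewise $g(q)=q^{e}g(1/q)$ with $e=\deg g$. Because the leading coefficients of $f$ and $g$ are positive, $\deg(fg)=d+e$, so multiplying these identities gives $(fg)(q)=q^{d+e}(fg)(1/q)$, i.e. $fg$ is symmetric, with center of symmetry $(d+e)/2$.

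For \emph{unimodality of $fg$} I would rely on the standard structural fact that a polynomial with nonnegative coefficients is symmetric and unimodal with a prescribed center $\gamma$ if and only if it is a nonnegative linear combination of the ``centered interval'' polynomials $V_t := q^{\gamma - t/2} + q^{\gamma - t/2 + 1} + \cdots + q^{\gamma + t/2}$, as $t$ ranges over nonnegative integers with $\gamma - t/2 \in \Z_{\geq 0}$. Indeed, if $\sum_m a_m q^m$ is symmetric about $\gamma$ and unimodal, the coefficients in such an expansion are the consecutive differences $a_m - a_{m-1}$ for $m \le \gamma$ (with $a_{-1}:=0$), which are nonnegative by unimodality; conversely each $V_t$ is plainly symmetric and unimodal about $\gamma$, and a nonnegative combination of polynomials all symmetric and unimodal about the \emph{same} center $\gamma$ is again such, since symmetry about $\gamma$ is a linear condition on coefficient vectors and the coefficient of $q^m$ in any such polynomial is a weakly decreasing function of $|m-\gamma|$, a property preserved under nonnegative linear combinations.

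Granting this, I would write $f = \sum_s c_s\, V^{f}_s$ with center $\gamma_f = d/2$ and $g = \sum_t c'_t\, V^{g}_t$ with center $\gamma_g = e/2$, all $c_s,c'_t\ge 0$, so that $fg = \sum_{s,t} c_s c'_t\, V^{f}_s V^{g}_t$. It then suffices to check that each product $V^{f}_s V^{g}_t$ is symmetric and unimodal about $\gamma_f+\gamma_g$, for then $fg$ is a nonnegative combination of polynomials symmetric and unimodal about the common center $\gamma_f+\gamma_g$ and the displayed structural fact finishes the argument. But $V^{f}_s V^{g}_t$ is a product of two ``interval'' polynomials, and the coefficient of $q^m$ in it counts the integers in the overlap of an interval of length $s$ with one of length $t$ whose relative displacement is controlled by $m$; as $m$ varies this is the familiar trapezoidal sequence (rising by $1$, then constant at $\min(s,t)+1$, then falling by $1$), which is manifestly symmetric about $\gamma_f+\gamma_g$ and unimodal. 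The only mildly delicate point is the need to carry along the center of symmetry throughout: unimodality by itself is not preserved under addition, and it is precisely the fact that all the pieces $V^{f}_s V^{g}_t$ share a common center that rescues the argument.
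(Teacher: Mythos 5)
Your proof is correct; the paper itself gives no proof of Lemma~\ref{lem:sym}, simply citing \cite{Stanley1989Log}, and your argument --- writing each factor as a nonnegative combination of interval polynomials centered at $d/2$ resp.\ $e/2$ and checking that the product of two such intervals has a trapezoidal, hence symmetric and unimodal, coefficient sequence about the common center $(d+e)/2$ --- is exactly the standard proof found in that reference. One small point: the nonnegativity of the consecutive differences $a_m-a_{m-1}$ for $m\le\gamma$ uses symmetry together with unimodality (a symmetric unimodal sequence is weakly increasing up to its center of symmetry), not unimodality alone, but this is a one-line observation and does not affect the argument.
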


For an element $x$ of $P$, let $V_{x}:=\{y\in P:y\geq x\}$ denote the principal upper order ideal and $\Lambda_x:=\{y\in P:y\leq x\}$ the principal lower order ideal.

\subsection{Root systems and Weyl groups}\label{sub:root}

In the rest of the section, we provide some background on classical theory of root systems and Weyl groups. The reader is referred to \cite{Humphreys1978Introduction} for a detailed exposition. 

Let $\Phi$ be a root system with a chosen set of simple roots $\Delta$ and the corresponding set of positive roots $\Phi^+$, and let $W=W(\Phi)$ be its Weyl group.  Throughout the paper, all roots systems and Weyl groups are assumed to be finite.  Recall the usual (Coxeter) \textit{length} $\ell(w)$ of $w$ is the smallest nonnegative integer $\ell$ such that $w$ can be written as a product of $\ell$ simple transpositions $w=s_{i_1}\cdots s_{i_{\ell}}$.  Such a minimal-length expression for $w$ is called a \emph{reduced expression}.  Any finite Weyl group contains a unique element, denoted $w_0$, of maximum length, called the \emph{longest element}.

For $w\in W(\Phi)$, define its \textit{inversion set} $$I_{\Phi}(w):=\{\alpha\in\Phi^+:w\alpha\in\Phi^-\}.$$ The longest element $w_0$ has inversion set equal to $\Phi^+$.  It is a standard fact that $\ell(w)=|I_{\Phi}(w)|$.  

We will be interested in the \emph{left weak order} $(W, \leq)$ on a Weyl group $W$: $w\leq uw$ if and only if $\ell(w)+\ell(u)=\ell(uw)$. In other words, the left weak order is the transitive closure of the covering relations $w\lessdot s_iw$, where $s_i$ is a simple transposition (reflection across a simple root) and $\ell(w)=\ell(s_iw)-1$. The following propositions are well-known (see for example Proposition 2.1 of \cite{Hohlweg2016On}).
\begin{proposition}\label{prop:weak}
$w\leq u$ if and only if $I_{\Phi}(w)\subseteq I_{\Phi}(u).$
\end{proposition}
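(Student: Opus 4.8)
The plan is to prove both directions by tracking how the inversion set $I_\Phi(\cdot)$ changes along a single covering step $w\lessdot s_iw$ of the left weak order, using throughout the standard criterion that for $\alpha_i\in\Delta$ one has $\ell(s_iw)>\ell(w)$ exactly when $w^{-1}\alpha_i\in\Phi^+$ (this also follows from $\ell(v)=|I_\Phi(v)|$ together with the computation below). The basic input is a \emph{cover lemma}: whenever $\ell(s_iw)=\ell(w)+1$ one has $I_\Phi(s_iw)=I_\Phi(w)\sqcup\{w^{-1}\alpha_i\}$. To see this I would use that $s_i$ permutes $\Phi^+\setminus\{\alpha_i\}$ and negates $\alpha_i$: for $\beta\in I_\Phi(w)$ the root $w\beta$ is negative and is not $-\alpha_i$ (else $\beta=-w^{-1}\alpha_i\in\Phi^-$), so $s_iw\beta\in\Phi^-$; meanwhile $w^{-1}\alpha_i$ is a new inversion of $s_iw$ (since $w\cdot w^{-1}\alpha_i=\alpha_i\in\Phi^+$ but $s_iw\cdot w^{-1}\alpha_i=-\alpha_i\in\Phi^-$, and $w^{-1}\alpha_i\in\Phi^+$ by hypothesis), and a cardinality count via $|I_\Phi(\cdot)|=\ell(\cdot)$ shows there are no others. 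The forward direction then follows at once: if $w\le u$, pick a saturated chain of covers from $w$ to $u$ and compose the inclusions supplied by the cover lemma to get $I_\Phi(w)\subseteq I_\Phi(u)$.

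For the converse I would induct on $\ell(u)-\ell(w)=|I_\Phi(u)|-|I_\Phi(w)|\ge 0$. When this is $0$, $I_\Phi(w)=I_\Phi(u)$ and it suffices to know that $v\mapsto I_\Phi(v)$ is injective; this is standard, and can be seen by observing that any $v\ne e$ has a simple root in $I_\Phi(v)$ (the minimal-height element of $I_\Phi(v)$ is simple, since a non-simple positive root $\gamma$ can be written $\alpha_j+\eta$ with $\alpha_j\in\Delta$, $\eta\in\Phi^+$, and $v\gamma\in\Phi^-$ then forces $v\alpha_j\in\Phi^-$ or $v\eta\in\Phi^-$), and then stripping that reflection off on the right and inducting on length. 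When $\ell(u)-\ell(w)>0$ the whole argument comes down to the following claim: there is a simple root $\alpha_i$ with $w^{-1}\alpha_i\in I_\Phi(u)\setminus I_\Phi(w)$ --- equivalently, $w(I_\Phi(u))$ contains a simple root. Given the claim, set $\gamma:=w^{-1}\alpha_i$: then $w\gamma=\alpha_i\in\Phi^+$ forces $\gamma\notin I_\Phi(w)$ and, with $\gamma\in I_\Phi(u)\subseteq\Phi^+$, gives $\ell(s_iw)=\ell(w)+1$, so $w\lessdot s_iw$; the cover lemma yields $I_\Phi(s_iw)=I_\Phi(w)\sqcup\{\gamma\}\subseteq I_\Phi(u)$, and since $\ell(u)-\ell(s_iw)=\ell(u)-\ell(w)-1$ the inductive hypothesis gives $s_iw\le u$, hence $w\le u$.

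Proving the claim is the step I expect to be the main obstacle. Since $I_\Phi(w)\subsetneq I_\Phi(u)$, the set $S:=I_\Phi(u)\setminus I_\Phi(w)$ is nonempty, and every $\gamma\in S$ has $w\gamma\in\Phi^+$. I would choose $\gamma\in S$ minimizing $\htt(w\gamma)$ and argue that $w\gamma$ must be simple. If not, write $w\gamma=\alpha_i+\eta$ with $\alpha_i\in\Delta$, $\eta\in\Phi^+$, $\htt(\eta)<\htt(w\gamma)$; from $u\gamma\in\Phi^-$ and $u\gamma=uw^{-1}\alpha_i+uw^{-1}\eta$ at least one of $uw^{-1}\alpha_i$, $uw^{-1}\eta$ is negative, and letting $\delta$ be the corresponding one of $w^{-1}\alpha_i$, $w^{-1}\eta$ (so $u\delta\in\Phi^-$ and $w\delta\in\{\alpha_i,\eta\}\subseteq\Phi^+$, with $\htt(w\delta)<\htt(w\gamma)$ since $w\gamma$ is not simple): if $\delta\in\Phi^+$ then $\delta\in S$ with $\htt(w\delta)<\htt(w\gamma)$, contradicting minimality; if $\delta\in\Phi^-$ then $-\delta\in\Phi^+$ and $w(-\delta)\in\Phi^-$, so $-\delta\in I_\Phi(w)\subseteq I_\Phi(u)$, forcing $u(-\delta)\in\Phi^-$ and hence $u\delta\in\Phi^+$, a contradiction. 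Hence $w\gamma$ is simple, establishing the claim and completing the induction.
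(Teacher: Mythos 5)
Your proof is correct. Note that the paper does not prove this proposition at all: it is quoted as a well-known fact with a citation (Proposition 2.1 of Hohlweg's survey), so there is no internal argument to compare against. Your self-contained proof follows the standard route found in the references: the cover lemma $I_{\Phi}(s_iw)=I_{\Phi}(w)\sqcup\{w^{-1}\alpha_i\}$ when $\ell(s_iw)=\ell(w)+1$ gives the forward direction along a saturated chain, and for the converse you induct on $\ell(u)-\ell(w)$, using the minimal-height argument to produce a simple root in $w\bigl(I_{\Phi}(u)\setminus I_{\Phi}(w)\bigr)$ and hence a cover $w\lessdot s_iw$ staying below $u$. All the auxiliary facts you invoke (that $\ell(s_iw)>\ell(w)$ iff $w^{-1}\alpha_i\in\Phi^+$, that $\ell(v)=|I_{\Phi}(v)|$, that a non-simple positive root $\gamma$ admits a simple root $\alpha_j$ with $\gamma-\alpha_j\in\Phi^+$, and injectivity of $v\mapsto I_{\Phi}(v)$, which you also prove) are standard and used correctly; the case analysis in your claim (whether $\delta=w^{-1}\alpha_i$ or $w^{-1}\eta$ is positive or negative) is complete, since the negative case contradicts $I_{\Phi}(w)\subseteq I_{\Phi}(u)$ and the positive case contradicts minimality of $\htt(w\gamma)$. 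So your write-up supplies a full proof where the paper only cites one, and it is essentially the classical argument.
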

\begin{proposition}\label{prop:biconvex}
The inversion set uniquely characterizes an element of the Weyl group. Moreover, $A\subseteq \Phi^+$ is the inversion set of some element if and only if it is \emph{biconvex}; that is, if and only if:
\begin{enumerate}
\item whenever $\alpha,\beta\in A$ and $\alpha+\beta\in\Phi^+$, then $\alpha+\beta\in A$ and,
\item whenever $\alpha,\beta\notin A$ and $\alpha+\beta\in\Phi^+$, then $\alpha+\beta\notin A$.
\end{enumerate}
\end{proposition}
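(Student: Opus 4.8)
The plan is to treat the two assertions in turn. \emph{Uniqueness} is immediate from Proposition~\ref{prop:weak}: if $I_\Phi(w)=I_\Phi(u)$ then $w\le u$ and $u\le w$, and since the weak order is generated by length-increasing covering relations it is genuinely a partial order, so $w=u$.

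For the \emph{forward implication}, suppose $A=I_\Phi(w)$. I would invoke the elementary fact that, in the basis of simple roots, every root has all coordinates $\ge 0$ or all coordinates $\le 0$; hence if $\gamma_1,\gamma_2$ are both negative (resp.\ both positive) roots and $\gamma_1+\gamma_2\in\Phi$, then $\gamma_1+\gamma_2$ is negative (resp.\ positive). Now if $\alpha,\beta\in A$ and $\alpha+\beta\in\Phi^+$, then $w\alpha,w\beta\in\Phi^-$, so $w(\alpha+\beta)=w\alpha+w\beta\in\Phi^-$, i.e.\ $\alpha+\beta\in A$; this is condition (1). Condition (2) is the mirror image, applied to $\alpha,\beta\in\Phi^+\setminus A$, for which $w\alpha,w\beta\in\Phi^+$.

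For the \emph{reverse implication} I would induct on $|A|$, with $A=\emptyset$ giving $w=e$. The first key step is that a nonempty biconvex set $A$ contains a simple root: choosing $\alpha\in A$ of minimal height, if $\alpha$ were not simple we could write $\alpha=\beta+\gamma$ with $\beta,\gamma\in\Phi^+$ of strictly smaller height, and condition (2) in contrapositive form would force $\beta\in A$ or $\gamma\in A$, contradicting minimality. Given such a simple root $\alpha_i\in A$, set $A':=s_i(A\setminus\{\alpha_i\})$; since $s_i$ permutes $\Phi^+\setminus\{\alpha_i\}$ we get $A'\subseteq\Phi^+\setminus\{\alpha_i\}$ with $|A'|=|A|-1$. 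The second key step is to check that $A'$ is again biconvex, after which induction produces $v$ with $I_\Phi(v)=A'$; computing the inversion set of $vs_i$ (it sends $\alpha_i$ to $-v\alpha_i$ and acts on the remaining positive roots through $s_i$, and $\alpha_i\notin I_\Phi(v)=A'$) gives $I_\Phi(vs_i)=s_i(A')\sqcup\{\alpha_i\}=(A\setminus\{\alpha_i\})\sqcup\{\alpha_i\}=A$, closing the induction.

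The main obstacle is this second key step, the stability of biconvexity under $A\mapsto s_i(A\setminus\{\alpha_i\})$. The verification splits into cases according to whether $\alpha_i$ occurs among the two summands being tested; the degenerate possibilities are ruled out because a simple root is never a sum of two positive roots, and the case where $\alpha_i$ does occur is exactly where one must use \emph{both} clauses of biconvexity of $A$ together — this is the one place the argument has genuine content rather than being a routine transport of structure through $s_i$.
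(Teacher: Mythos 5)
The paper does not prove this proposition at all: it is quoted as a standard fact, with a pointer to Proposition 2.1 of Hohlweg--Labb\'e, so there is no internal proof to compare yours against. Your argument is the standard one and it is sound. Uniqueness via Proposition~\ref{prop:weak} and antisymmetry is fine; the forward direction via the sign-coherence of root coordinates is fine; and the induction on $|A|$, peeling off a simple root $\alpha_i\in A$ of minimal height and passing to $A'=s_i(A\setminus\{\alpha_i\})$, is exactly the classical proof (the recursion $I_\Phi(vs_i)=s_iI_\Phi(v)\sqcup\{\alpha_i\}$ you use is the same identity the paper invokes inside the proof of Lemma~\ref{lem:restrictProduct}). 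The one step you leave as a sketch, biconvexity of $A'$, does go through as you indicate: condition (1) for $A'$ is immediate since $\alpha_i$ can occur neither as a summand (it is not in $A'$) nor as the sum (a simple root is not a sum of two positive roots), so one just transports clause (1) of $A$ through $s_i$; for condition (2), if neither summand is $\alpha_i$ one transports clause (2) of $A$ the same way, while if one summand is $\alpha_i$ and $\alpha_i+\gamma'\in A'$, then $s_i(\alpha_i+\gamma')=s_i\gamma'-\alpha_i\in A$ together with $\alpha_i\in A$ and clause (1) of $A$ forces $s_i\gamma'\in A$, contradicting $\gamma'\notin A'$. So the case analysis you predicted is correct (with the $\alpha_i$-summand case using clause (1) of $A$ plus $\alpha_i\in A$, the other case using clause (2)); writing out those few lines would make the proof complete.
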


Proposition~\ref{prop:biconvex} allows one to define a \textit{restriction} map. Suppose that $\Delta'\subset \Delta$ is a subset of our simple roots, and let $\Phi'$ be the root system generated by $\Delta'$ and let $(\Phi')^+$ be the corresponding positive roots. Then for $w\in W(\Phi)$, $I_{\Phi}(w)$ is a biconvex set so its restriction $I_{\Phi}(w)\cap (\Phi')^+$ to a smaller dimensional subspace is also biconvex. Let $w|_{\Phi'}\in W(\Phi')$ be the unique element such that $I_{\Phi'}(w|_{\Phi'})=I_{\Phi}(w)\cap (\Phi')^+.$ We call $w|_{\Phi'}$ the \textit{restriction} of $w$ to $\Phi'$.

\begin{example}
Let $e_i$ denote the $i$-th standard basis vector in $\mathbb{R}^n$ and let $\Delta=\{e_1-e_2,e_2-e_3,\ldots,e_6-e_7\}$ generate a root system of type $A_6$ (see Section \ref{sec:irreducible-classification}), whose Weyl group is the symmetric group $\S_7$. Let $w=4623157$ and let $\Delta'=\{e_2-e_3,e_3-e_4\}$ generate $\Phi'$. Recall that for $i<j$ the positive root $e_i-e_j$ is an inversion of $w$ if $w(i)>w(j)$. It can be seen that $I_{\Phi}(w)\cap (\Phi')^+=\{e_2-e_3,e_2-e_4\}$. Therefore, $w|_{\Phi'}=312$; this corresponds to the relative order in which 6,2, and 3 appear in the one-line notation of $w$.
\end{example}

The restriction map was introduced by Billey and Postnikov \cite{Billey2005Smoothness} (there called the \emph{flattening map}) to study smoothness of Schubert varieties. We are here using the restriction map in a more restrictive sense by only considering root subsystems generated by a subset of the simple roots, rather than the more general notion of subsystem considered in that work; see Section \ref{sec:pattern-avoidance} for a discussion of the more general notion.

For convenience, we also define the \textit{root poset}, which is the partial $(\Phi^+, \leq)$ such that $\alpha\leq\beta$ if $\beta-\alpha$ is a nonnegative linear combination of the simple roots. Minimal elements in the root poset are precisely the simple roots $\Delta$.

\subsubsection{The classification of irreducible root systems} \label{sec:irreducible-classification}
Throughout this paper we will refer to the well-known Cartan-Killing classification of irreducible root systems (see, for example, \cite{Humphreys1978Introduction}).  This classification consists of the infinite families of types $A_n, B_n, C_n,$ and $D_n$ as well as the exceptional types $G_2, F_4, E_6, E_7,$ and $E_8$.  Our conventions for realizations of the infinite families are below:
\begin{itemize}
    \item Type $A_n$: $\Phi=\{e_i - e_j \: | \: 1 \leq i,j \leq n+1\} \subset \mathbb{R}^{n+1}$ with positive roots $\Phi^+=\{e_i-e_j \: | \: i<j\}$ and simple roots $\alpha_i=e_i-e_{i+1}$ for $i=1,...,n$.
    \item Type $B_n$: $\Phi=\{\pm e_i \pm e_j, \: \pm e_i \: | \: 1 \leq i,j \leq n\} \subset \mathbb{R}^{n}$ with positive roots $\Phi^+=\{e_i \pm e_j, \: e_i \: | \: i<j\}$ and simple roots $\alpha_i=e_i-e_{i+1}$ for $i=1,...,n-1$ and $\alpha_n=e_n$.
    \item Type $C_n$: $\Phi=\{\pm e_i \pm e_j, \: \pm 2e_i \: | \: 1 \leq i,j \leq n\} \subset \mathbb{R}^{n}$ with positive roots $\Phi^+=\{e_i \pm e_j, \: 2e_i \: | \: i<j\}$ and simple roots $\alpha_i=e_i-e_{i+1}$ for $i=1,...,n-1$ and $\alpha_n=2e_n$.
    \item Type $D_n$: $\Phi=\{\pm e_i \pm e_j \: | \: 1 \leq i,j \leq n\} \subset \mathbb{R}^{n}$ with positive roots $\Phi^+=\{e_i \pm e_j, \: | \: i<j\}$ and simple roots $\alpha_i=e_i-e_{i+1}$ for $i=1,...,n-1$ and $\alpha_n=e_{n-1}+e_n$.
\end{itemize}

When all irreducible components of a root system $\Phi$ are of type $A,D,$ or $E$ (or equivalently, when all roots have the same length), $\Phi$ is said to be \emph{simply laced}.

\section{A Special Case: $w_0^J$} \label{sec:w0J}
Before introducing the general notion of a separable element in a Weyl group, we first consider a special case.

The following facts about parabolic subgroups and quotients are well-known (see, for example, Chapter 2 of \cite{Bjorner-Brenti}).
Let $W$ be a Weyl group, $\Phi$ the corresponding root system, and $\Delta$ a choice of simple roots.  For $J \subseteq \Delta$ let $W_J$ be the \emph{parabolic subgroup} of $W$ generated by the reflections associated to the roots in $J$.  The \emph{parabolic quotient} $W^J \subseteq W$ is a particular choice of coset representatives for $W_J$ defined as follows: $w \in W^J$ if and only if no reduced expression $w=s_{i_1}\cdots s_{i_{\ell}}$ for $w$ has $s_{i_{\ell}} \in J$.  Since $W^J$ forms a complete system of coset representatives, every element $w \in W$ can be uniquely written $w=w^J w_J$ with $w^J \in W^J$ and $w_J \in W_J$; in this decomposition we have
\begin{equation} \label{eq:lengths-add}
    \ell(w)=\ell(w^J) + \ell(w_J).
\end{equation} 
Viewed as a poset under the induced left weak order, $W^J$ is ranked by length.  Thus:
\begin{equation} \label{eq:subgroup-times-quotient}
    F(W)=F(W^J)F(W_J).
\end{equation}
When applying the above decomposition to the longest element $w_0$, we write $w_0=w_0^J w_0(J)$.  It is clear that $w_0(J)$ is the longest element of $W_J$, viewed as a Weyl group in its own right.

The following proposition follows from \cite{Bjorner-Wachs}.

\begin{proposition} \label{prop:quotient-is-ideal}
In the setup from above, let $J \subseteq \Delta$.  Then $W^J$, viewed as a poset under the induced left weak order, is exactly the order ideal $\Lambda_{w_0^J}$ in the left weak order on $W$.
\end{proposition}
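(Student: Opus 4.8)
The plan is to show both inclusions $W^J \subseteq \Lambda_{w_0^J}$ and $\Lambda_{w_0^J} \subseteq W^J$ using the characterization of the left weak order by containment of inversion sets (Proposition~\ref{prop:weak}).

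First I would record the standard description of the parabolic quotient in terms of inversion sets: for $w \in W$, one has $w \in W^J$ if and only if $I_{\Phi}(w)$ contains no simple root of $J$, equivalently $I_{\Phi}(w) \cap \Phi_J^+ = \emptyset$, where $\Phi_J$ is the root subsystem spanned by $J$. (This follows from the fact that $w$ has a reduced word ending in $s_i$ iff $\alpha_i \in I_\Phi(w)$, together with $W_J$ being the pointwise stabilizer behavior on $\Phi_J$.) Dually, the decomposition $w_0 = w_0^J w_0(J)$ with lengths adding, together with $w_0(J)$ being the longest element of $W_J$, gives that $I_\Phi(w_0^J) = \Phi^+ \setminus \Phi_J^+$: indeed $w_0^J$ is the unique element with inversion set $\Phi^+ \setminus \Phi_J^+$ since that set is biconvex (it is the complement of the biconvex set $\Phi_J^+$, and complements of biconvex sets are biconvex by the symmetry of the two conditions in Proposition~\ref{prop:biconvex}).

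With these two facts, the proof is immediate. For the inclusion $W^J \subseteq \Lambda_{w_0^J}$: if $w \in W^J$ then $I_\Phi(w) \cap \Phi_J^+ = \emptyset$, so $I_\Phi(w) \subseteq \Phi^+ \setminus \Phi_J^+ = I_\Phi(w_0^J)$, hence $w \leq w_0^J$ by Proposition~\ref{prop:weak}, i.e.\ $w \in \Lambda_{w_0^J}$. Conversely, if $w \in \Lambda_{w_0^J}$ then $I_\Phi(w) \subseteq I_\Phi(w_0^J) = \Phi^+ \setminus \Phi_J^+$, so $I_\Phi(w)$ meets $\Phi_J^+$ trivially, in particular contains no simple root of $J$, so $w \in W^J$. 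This also shows $W^J$ and $\Lambda_{w_0^J}$ coincide as subposets of the left weak order, since both carry the induced order; ranking by length for $W^J$ was noted in the preamble and for $\Lambda_{w_0^J}$ follows from the fact that lower intervals in weak order are ranked by length.

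The only real point requiring care is the identification $I_\Phi(w_0^J) = \Phi^+ \setminus \Phi_J^+$, and more precisely the claim that $\Phi^+ \setminus \Phi_J^+$ is biconvex so that such an element exists and is unique. Concretely: if $\alpha, \beta \notin \Phi_J^+$ and $\alpha + \beta \in \Phi^+$, one must check $\alpha + \beta \notin \Phi_J^+$, which holds because $\Phi_J$ is the set of roots in the span of $J$ and at least one of $\alpha,\beta$ has a nonzero coefficient on a simple root outside $J$ (that coefficient survives in the sum since all coefficients are nonnegative); and if $\alpha, \beta \in \Phi_J^+$ then $\alpha + \beta$, if a root, lies in the span of $J$ hence in $\Phi_J^+$. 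Thus both biconvexity conditions hold, so $\Phi^+\setminus\Phi_J^+$ is biconvex, and by Proposition~\ref{prop:biconvex} there is a unique $w$ with this inversion set; that $w = w_0^J$ follows since $w_0 = w_0^J w_0(J)$, $\ell(w_0) = \ell(w_0^J) + \ell(w_0(J))$, and $I_\Phi(w_0(J)) = \Phi_J^+$ (as $w_0(J)$ is the longest element of $W_J$), which forces $I_\Phi(w_0^J)$ to be exactly the remaining positive roots. Alternatively, one can simply cite \cite{Bjorner-Wachs} as the source for Proposition~\ref{prop:quotient-is-ideal} and use the inversion-set argument above as the self-contained verification.
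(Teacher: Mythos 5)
Your proof is correct, but it is worth pointing out that the paper does not actually prove Proposition \ref{prop:quotient-is-ideal}: it is quoted without proof from \cite{Bjorner-Wachs}. Your proposal therefore supplies a self-contained verification resting only on tools already set up in Section \ref{sec:intro}, namely Proposition \ref{prop:weak} (weak order is containment of inversion sets) and Proposition \ref{prop:biconvex} (inversion sets are exactly the biconvex sets). The two facts you isolate --- that $w\in W^J$ if and only if $I_\Phi(w)\cap\Phi_J^+=\emptyset$, and that $I_\Phi(w_0^J)=\Phi^+\setminus\Phi_J^+$ --- are indeed the whole content, and your biconvexity check for $\Phi^+\setminus\Phi_J^+$ is fine. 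Two small points of polish: the parenthetical justification of the first fact (``pointwise stabilizer behavior'') is garbled as written; the clean argument is that $I_\Phi(w)\cap\Phi_J^+$ is biconvex in $\Phi_J^+$, hence equals $I_{\Phi_J}$ of some element of $W_J$, and any nonidentity element of $W_J$ inverts a simple root of $J$, so having no inversions among the simple roots of $J$ already forces $I_\Phi(w)\cap\Phi_J^+=\emptyset$. Likewise, the identification $I_\Phi(w_0^J)=\Phi^+\setminus\Phi_J^+$ is finished most quickly by noting that $w_0^J\in W^J$ gives the inclusion $I_\Phi(w_0^J)\subseteq\Phi^+\setminus\Phi_J^+$, while $|I_\Phi(w_0^J)|=\ell(w_0^J)=\ell(w_0)-\ell(w_0(J))=|\Phi^+|-|\Phi_J^+|$ forces equality; this avoids having to track how $w_0(J)$ acts on $\Phi^+\setminus\Phi_J^+$. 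With those sentences filled in, your argument stands on its own, which is arguably preferable to the bare citation given in the paper.
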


\begin{proposition} \label{prop:w0J-case}
For any $J \subseteq \Delta$, the intervals $\Lambda_{w_0^J}$ and $V_{w_0^J}$ are rank-symmetric and rank-unimodal and 
\[
F(\Lambda_{w_0^J})F(V_{w_0^J})=F(W).
\]
\end{proposition}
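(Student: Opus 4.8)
The plan is to identify each interval, up to a rank shift, with a poset whose rank generating function is already understood, and then to invoke classical facts about Poincar\'e polynomials together with one application of hard Lefschetz. Write $\Phi_J\subseteq\Phi$ for the sub-root-system spanned by $J$, so that $W_J=W(\Phi_J)$ and $\Phi_J^+=\Phi^+\cap\Phi_J$. For the lower interval, Proposition~\ref{prop:quotient-is-ideal} identifies $\Lambda_{w_0^J}$ with $W^J$ as posets, so $F(\Lambda_{w_0^J})=F(W^J)$. For the upper interval, I would first observe that $I_\Phi(w_0^J)=\Phi^+\setminus\Phi_J^+$: since $w_0^J\in W^J$ it sends every root of $\Phi_J^+$ to a positive root, which gives ``$\subseteq$'', and equality follows by counting, as $\ell(w_0^J)=\ell(w_0)-\ell(w_0(J))=|\Phi^+|-|\Phi_J^+|$ by \eqref{eq:lengths-add} and the fact that $w_0(J)$ is the longest element of $W_J$. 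Hence $V_{w_0^J}=\{w:I_\Phi(w)\supseteq\Phi^+\setminus\Phi_J^+\}$, and I claim the restriction map $w\mapsto w|_{\Phi_J}$ is an isomorphism of ranked posets from $V_{w_0^J}$ onto $W_J$, carrying the minimum $w_0^J$ to the identity. It respects ranks because $\ell(w)=|\Phi^+\setminus\Phi_J^+|+|I_\Phi(w)\cap\Phi_J^+|=\ell(w_0^J)+\ell(w|_{\Phi_J})$ for $w\in V_{w_0^J}$, and it is order-preserving in both directions by Proposition~\ref{prop:weak}, since for $w,w'\in V_{w_0^J}$ the containment $I_\Phi(w)\subseteq I_\Phi(w')$ is equivalent to $I_\Phi(w)\cap\Phi_J^+\subseteq I_\Phi(w')\cap\Phi_J^+$. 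Surjectivity is the one step that uses Proposition~\ref{prop:biconvex}: for any biconvex $B\subseteq\Phi_J^+$, the set $(\Phi^+\setminus\Phi_J^+)\cup B$ is biconvex in $\Phi^+$, the two conditions of biconvexity reducing to the corresponding conditions for $B$ inside $\Phi_J^+$ (using that a root of $\Phi$ is positive iff it has a positive coefficient on some simple root, and that a sum of two roots of $\Phi_J$ which is a root lies in $\Phi_J$). This gives $F(V_{w_0^J})=F(W_J)$, so by \eqref{eq:subgroup-times-quotient},
\[
F(\Lambda_{w_0^J})\,F(V_{w_0^J})=F(W^J)\,F(W_J)=F(W),
\]
which is the product identity.

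For symmetry and unimodality, recall that for any finite Weyl group $G$ the polynomial $F(G)$ factors as a product of polynomials of the form $1+q+\cdots+q^{d-1}$ (one for each degree $d$ of $G$), each of which is symmetric and unimodal, so $F(G)$ is symmetric and unimodal by Lemma~\ref{lem:sym}. Applying this to $G=W_J$, which is a finite Weyl group in its own right, shows that $V_{w_0^J}$ is rank-symmetric and rank-unimodal, and applying it to $G=W$ shows that $F(W)$ is symmetric and unimodal. For $\Lambda_{w_0^J}\cong W^J$, symmetry of $F(W^J)=F(W)/F(W_J)$ is then formal, since a ratio of palindromic polynomials is palindromic whenever it is a polynomial (substitute $q\mapsto 1/q$ and compare degrees). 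Unimodality of $F(W^J)$ is the one point that needs genuine input: $F(W^J)=\sum_{w\in W^J}q^{\ell(w)}$ is the Poincar\'e polynomial of the cohomology ring of the partial flag variety $G/P_J$, whose Schubert cells are indexed by $W^J$ with the cell of $w$ in complex dimension $\ell(w)$, and multiplication by powers of an ample class (the hard Lefschetz theorem) yields its unimodality. In type $A$ one can avoid geometry: there $F(W^J)$ is a $q$-multinomial coefficient, a product of symmetric unimodal $q$-binomial coefficients.

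The step I expect to be the main obstacle is exactly this unimodality of $F(W^J)$. It does not follow formally from the unimodality of $F(W)$ and $F(W_J)$: a factor of a symmetric unimodal polynomial with nonnegative coefficients need not itself be unimodal, as $1+q+q^2+2q^3+q^4+q^5+q^6=(1+q^3)(1+q+q^2+q^3)$ shows, the first factor $1+q^3$ being non-unimodal. So some genuine geometric or combinatorial input is unavoidable; everything else in the argument is bookkeeping with inversion sets and biconvexity.
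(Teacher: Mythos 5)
Your proof is correct, and its skeleton is the same as the paper's: identify $\Lambda_{w_0^J}$ with $W^J$ via Proposition \ref{prop:quotient-is-ideal}, identify $V_{w_0^J}$ (after the rank shift by $\ell(w_0^J)$) with $W_J$, multiply using \eqref{eq:subgroup-times-quotient}, and appeal to the classical rank-symmetry and rank-unimodality of $W^J$ and $W_J$. Where you diverge is in the second identification: the paper obtains $F(V_{w_0^J})=F(W_J)$ in two lines by observing that multiplication by $w_0$ is an antiautomorphism of the weak order carrying $V_{w_0^J}$ onto $\Lambda_{w_0(J)}\cong W_J$ and then invoking self-duality of the weak order on the finite group $W_J$, whereas you compute $I_\Phi(w_0^J)=\Phi^+\setminus\Phi_J^+$ (with $\Phi_J^+$ the positive roots spanned by $J$) and construct an explicit rank-preserving poset isomorphism $V_{w_0^J}\to W_J$ by restricting inversion sets, with biconvexity supplying surjectivity; both arguments are valid, the paper's being shorter and yours more self-contained and more transparent about the rank shift. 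For the final step the paper simply cites Stanley's 1980 result for the symmetry and unimodality of $F(W^J)$ and $F(W_J)$; your route --- the factorization of $F(W_J)$ into terms $1+q+\cdots+q^{d-1}$, formal palindromicity of the polynomial quotient for the symmetry of $F(W^J)$, and hard Lefschetz on $G/P_J$ for its unimodality --- is essentially the content of that citation, and your closing observation that unimodality of $F(W^J)$ cannot be deduced formally from that of $F(W)$ and $F(W_J)$ correctly pinpoints why some genuine geometric or combinatorial input is required at exactly that point.
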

\begin{proof}
By Proposition \ref{prop:quotient-is-ideal}, $\Lambda_{w_0^J}=W^J$ as ranked posets, so $F(\Lambda_{w_0^J})=F(W^J)$.  Similarly, we have $F(V_{w_0^J})=F(W_J)$ (here the rank function on $V_{w_0^J}$ comes from viewing it as a poset in it's own right, so that $w_0^J$ is in rank zero).  To see this, note that multiplication by $w_0$ is an antiautomorphism of $W$ sending $V_{w_0^J}$ to $\Lambda_{w_0(J)} \cong W_J$.  Since this latter poset, as the weak order on a finite Weyl group, is self-dual, we get the desired result by applying (\ref{eq:subgroup-times-quotient}).

Finally, we note that $F(W^J)$ and $F(W_J)$ are known to be rank-symmetric and rank-unimodal, for example by \cite{Stanley1980Weyl}.
\end{proof}

\section{Separable elements} \label{sec:defs-and-main-result}
Let $\Phi,\Delta,W(\Phi)$ be as in Section~\ref{sub:root}. We start by defining \textit{separable} elements of a Weyl group in a recursive manner, which befits its name ``separable".  A nonrecursive characterization of these elements in terms of pattern avoidance is provided by Theorem \ref{thm:pattern-classification}.

\begin{definition}\label{def:separable}
Let $w\in W(\Phi)$. Then $w$ is \textit{separable} if one of the following holds:
\begin{itemize}
\item $\Phi$ is of type $A_1$;
\item $\Phi=\bigoplus\Phi_i$ is reducible and $w|_{\Phi_i}$ is separable for each $i$;
\item $\Phi$ is irreducible and there exists a \textit{pivot} $\alpha_i\in\Delta$ such that $w|_{\Phi'}\in W(\Phi')$ is separable where $\Phi'$ is generated by $\Delta'=\Delta\setminus\{\alpha_i\}$ and such that either 
\begin{align*}
    & \{\beta\in\Phi^+:\beta\geq\alpha_i\}\subset I_{\Phi}(w), \text{ or} \\
    & \{\beta\in\Phi^+:\beta\geq\alpha_i\}\cap I_{\Phi}(w)=\emptyset.
\end{align*}
\end{itemize}
Since this definition depends not only on $w$ but also on $\Phi$, we sometimes say that $(w,\Phi)$ is separable for clarity.
\end{definition}

\begin{remark}
With some modifications, one could make a similar definition for elements of any finite Coxeter group, a class which includes the finite Weyl groups.  However not much is to be gained by doing this: the only infinite family of finite Coxeter groups which are not Weyl groups are the dihedral groups, and the separable elements in this case are not interesting, they are only the elements $u$ of length zero or one, and their complements $w_0u$.  The only other cases are the two exceptional Coxeter groups of types $H_3$ and $H_4$; these are small enough so that results of the kind that interest us here can easily be investigated by computer.  Therefore we have chosen not to deal with the additional technical complications of working in this only very slightly more general setting.
\end{remark}

This notion of separability is well-defined, as every element in the Weyl group of type $A_1$ (the unique rank-one Weyl group) is separable, and to check whether $w\in W(\Phi)$ is separable, we end up checking a separable condition on root systems with strictly smaller rank. Secondly, this definition suggests a natural way to construct separable elements.
\begin{lemma}\label{lem:construct}
Let $\Phi$ be an irreducible root system with a set of simple roots $\Delta$ and positive roots $\Phi^+$. Let $\alpha_i\in\Delta$, $\Delta':=\Delta\setminus\{\alpha_i\}$, $\Phi'$ be the root subsystem generated by $\Delta'$. Let $A$ be a biconvex set in $(\Phi')^+$. Then $A$ is biconvex in $\Phi^+$ (and dually, $A\cup\{\beta\in\Phi^+:\beta\geq\alpha_i\}$ is biconvex in $\Phi^+$).
\end{lemma}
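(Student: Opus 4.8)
The plan is to verify the two biconvexity conditions from Proposition \ref{prop:biconvex} directly for $A$ viewed as a subset of $\Phi^+$, using the key structural fact that $\Phi' = \Phi \cap \spn(\Delta')$ and that any positive root $\beta \in \Phi^+ \setminus (\Phi')^+$ must have a strictly positive coefficient on $\alpha_i$ when expanded in the simple root basis. Write each root $\gamma \in \Phi^+$ as $\gamma = \sum_j c_j(\gamma)\, \alpha_j$ with $c_j(\gamma) \geq 0$; then $\gamma \in (\Phi')^+$ iff $c_i(\gamma) = 0$, and $\gamma \geq \alpha_i$ in the root poset implies $c_i(\gamma) \geq 1$. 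I would record this as a preliminary observation, noting in particular that if $\alpha, \beta \in \Phi^+$ and $\alpha + \beta \in \Phi^+$, then $c_i(\alpha+\beta) = c_i(\alpha) + c_i(\beta)$.

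First I would check condition (1): suppose $\alpha, \beta \in A$ and $\alpha + \beta \in \Phi^+$. Since $A \subseteq (\Phi')^+$, we have $c_i(\alpha) = c_i(\beta) = 0$, hence $c_i(\alpha+\beta) = 0$, so $\alpha + \beta \in (\Phi')^+$. Because $A$ is biconvex in $(\Phi')^+$ and $\alpha,\beta \in A$, condition (1) applied inside $\Phi'$ gives $\alpha + \beta \in A$. Next, condition (2): suppose $\alpha, \beta \notin A$, $\alpha + \beta \in \Phi^+$; I must show $\alpha+\beta \notin A$. If $\alpha + \beta \notin (\Phi')^+$ this is immediate since $A \subseteq (\Phi')^+$. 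If $\alpha + \beta \in (\Phi')^+$, then $c_i(\alpha) + c_i(\beta) = 0$, forcing $c_i(\alpha) = c_i(\beta) = 0$, so both $\alpha, \beta \in (\Phi')^+$; now $\alpha, \beta \notin A$ together with biconvexity of $A$ in $(\Phi')^+$ yields $\alpha + \beta \notin A$. This establishes that $A$ is biconvex in $\Phi^+$.

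For the dual statement, let $B := \{\beta \in \Phi^+ : \beta \geq \alpha_i\}$ and consider $A \cup B$; the cleanest route is to observe that $A \cup B = \Phi^+ \setminus A'$ where $A' := (\Phi')^+ \setminus A$ is the complement of $A$ inside $(\Phi')^+$ (using that $\Phi^+ = (\Phi')^+ \sqcup \{\gamma : c_i(\gamma) \geq 1\}$ and that every $\gamma$ with $c_i(\gamma) \geq 1$ lies in $B$ — this last point needs the observation that such a $\gamma$ dominates some simple root with positive $\alpha_i$-coefficient, and since $\Phi$ is irreducible and $\Delta' = \Delta \setminus \{\alpha_i\}$, connectivity of the Dynkin diagram forces $\gamma \geq \alpha_i$, which I should justify carefully). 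Since $A'$ is biconvex in $(\Phi')^+$ (complements of biconvex sets are biconvex), the first part shows $A'$ is biconvex in $\Phi^+$, and then its complement $A \cup B$ is biconvex in $\Phi^+$ as well. Alternatively, $A \cup B$ is the inversion set complement $I_\Phi(w_0) \setminus I_\Phi(w|_{\Phi'})$-type object, so one can invoke that $w \mapsto w_0 w$ sends biconvex sets to biconvex sets.

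The main obstacle I anticipate is the claim that every positive root $\gamma$ with $c_i(\gamma) \geq 1$ satisfies $\gamma \geq \alpha_i$ in the root poset, i.e. that $B$ really is all of $\Phi^+ \setminus (\Phi')^+$; this is where irreducibility of $\Phi$ is genuinely used (it can fail if $\alpha_i$ is a disconnected node). The argument is that if $\gamma = \sum_j c_j \alpha_j$ with $c_i \geq 1$, one can peel off simple roots one at a time — staying within $\Phi^+$ and keeping $c_i$-coefficient $\geq 1$ at each step, using the standard fact that a positive root of height $\geq 2$ can always be written as a positive root plus a simple root — until reaching a root of the form $c_i' \alpha_i$ with $c_i' \geq 1$, which for $\gamma \in \Phi$ must be $\alpha_i$ itself (or, in non-reduced-free conventions, handled by the specific root system); chaining these steps shows $\gamma \geq \alpha_i$. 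I would state this peeling argument as a short sublemma, or cite the analogous standard fact about root posets of irreducible root systems having $\alpha_i$ below every root not supported on $\Delta'$.
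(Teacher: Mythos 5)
Your proof is correct, and it is somewhat more self-contained than the paper's own argument, which is only a sketch. The paper argues via the Dynkin diagram: it invokes the classical fact that the support of any positive root is connected in the diagram (a tree), deduces from this that two roots of $A$ supported on different components of $\Delta'$ cannot sum to a root and that $\Phi^+\setminus(\Phi')^+=\{\beta\in\Phi^+:\beta\geq\alpha_i\}$, and then treats condition (2) only in the case where both summands contain $\alpha_i$ in their support, leaving the dual half to the word ``dually.'' You instead do coefficient bookkeeping on $c_i$ together with the parabolic fact $\Phi'=\Phi\cap\spn_{\R}(\Delta')$ (which you rightly isolate as the one external input; the paper's connected-support fact plays the same role), check both biconvexity conditions in all cases, and obtain the dual statement cleanly by complementation: $(\Phi')^+\setminus A$ is biconvex in $(\Phi')^+$, hence in $\Phi^+$ by the first part, hence its complement $A\cup\{\beta\geq\alpha_i\}$ is biconvex in $\Phi^+$. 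This buys a complete case analysis (the paper's sketch of condition (2) silently omits the case where both roots lie in $(\Phi')^+$, which is covered by biconvexity of $A$ in $(\Phi')^+$, as you note) at the cost of slightly more notation; the paper's version buys brevity by leaning on cited classical facts.

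One remark: the ``main obstacle'' you anticipate is not actually an obstacle. With the paper's definition of the root poset ($\alpha\leq\beta$ iff $\beta-\alpha$ is a nonnegative linear combination of simple roots), the condition $\beta\geq\alpha_i$ is literally equivalent to $c_i(\beta)\geq 1$, so $\{\beta\in\Phi^+:\beta\geq\alpha_i\}=\Phi^+\setminus(\Phi')^+$ follows immediately from $\Phi'=\Phi\cap\spn_{\R}(\Delta')$; no peeling through intermediate roots is needed (that would only be required if $\leq$ were defined via saturated chains of roots). Relatedly, irreducibility of $\Phi$ is not what this step uses --- the identity, and indeed the whole lemma, holds for any parabolic $\Delta'\subseteq\Delta$; irreducibility appears in the statement because of how the lemma is used in Definition \ref{def:separable}, and in the paper's sketch only through its tree-structure phrasing.
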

\begin{proof}
We sketch the proof as the lemma follows easily from classical theory on Dynkin diagrams. Any Dynkin diagram of an irreducible root system is a tree with nodes labeled by the elements of $\Delta$. So removing the node corresponding to $\alpha_i$ from $\Delta$ separates this tree to multiple trees. As a well-known fact (see for example \cite{Humphreys1978Introduction}), we know that for any root $\beta\in\Phi^+$, its support $\{\alpha\in\Delta:\alpha\leq\beta\}$ must form a connected subgraph of the Dynkin diagram. This fact implies that if $\alpha,\beta\in A$ are supported on different connected components of $\Delta'$, then $\alpha+\beta$ cannot be a root. So $A$ is convex. This fact also gives $\Phi^+\setminus(\Phi')^+=\{\beta\in\Phi^+:\beta\geq\alpha_i\}$. Thus, if $\alpha,\beta\notin A$ and $\alpha+\beta\in\Phi^+$, then $\alpha_i\leq \alpha,\beta$ and $\alpha_i\leq\alpha+\beta$ so $\alpha+\beta\notin A$.
\end{proof}

\begin{corollary}
If we start with separable elements in each connected component of $\Delta'=\Delta\setminus\{\alpha_i\}$, then we can obtain two separable elements in $W(\Phi)$ from them by either adding all of $\{\beta\in\Phi^+:\beta\geq\alpha_i\}$ to $I_{\Phi}(w)$ or adding none of it to $I_{\Phi}(w)$.
\end{corollary}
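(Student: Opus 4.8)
The plan is to assemble the given separable elements into a single separable element of $W(\Phi')$, and then to read off the two desired elements of $W(\Phi)$ from Lemma~\ref{lem:construct} and Definition~\ref{def:separable}. Write $\Phi'=\bigoplus_j\Phi'_j$, where $\Phi'_j$ is the irreducible root subsystem generated by the $j$-th connected component of $\Delta'$, and let $w_j\in W(\Phi'_j)$ be the given separable element, with (biconvex) inversion set $A_j:=I_{\Phi'_j}(w_j)\subseteq(\Phi'_j)^+$. Put $A:=\bigsqcup_j A_j$.

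First I would check that $A$ is biconvex in $(\Phi')^+$. This is essentially the support-connectivity argument already used to prove Lemma~\ref{lem:construct}: every positive root of $\Phi'$ has connected support contained in a single component of $\Delta'$, and if $\gamma=\alpha+\beta$ is a sum of two positive roots then $\Supp(\alpha),\Supp(\beta)\subseteq\Supp(\gamma)$, so any relation $\alpha+\beta\in(\Phi')^+$ with $\alpha,\beta\in(\Phi')^+$ already takes place within one $(\Phi'_j)^+$; biconvexity of $A$ then follows componentwise from biconvexity of the $A_j$. By Proposition~\ref{prop:biconvex} there is a unique $u\in W(\Phi')$ with $I_{\Phi'}(u)=A$, and by construction $u|_{\Phi'_j}=w_j$ for every $j$. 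Hence $(u,\Phi')$ is separable: it satisfies the reducible clause of Definition~\ref{def:separable} when $\Delta'$ is disconnected, and equals one of the given $w_j$ directly when $\Delta'$ is connected (the case $\Delta'=\emptyset$ being the type $A_1$ base case).

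Next I would apply Lemma~\ref{lem:construct} to $A$: both $A$ and $B:=A\cup\{\beta\in\Phi^+:\beta\geq\alpha_i\}$ are biconvex in $\Phi^+$, so by Proposition~\ref{prop:biconvex} they are the inversion sets $I_\Phi(w)$ and $I_\Phi(w')$ of unique elements $w,w'\in W(\Phi)$. Since $\Phi^+\setminus(\Phi')^+=\{\beta\in\Phi^+:\beta\geq\alpha_i\}$ — again from the proof of Lemma~\ref{lem:construct} — intersecting with $(\Phi')^+$ gives $I_\Phi(w)\cap(\Phi')^+=A=I_\Phi(w')\cap(\Phi')^+$, so $w|_{\Phi'}=w'|_{\Phi'}=u$, which we have shown is separable. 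Finally $\{\beta\in\Phi^+:\beta\geq\alpha_i\}\cap I_\Phi(w)=\emptyset$ whereas $\{\beta\in\Phi^+:\beta\geq\alpha_i\}\subseteq I_\Phi(w')$, so $w$ and $w'$ satisfy, respectively, the second and the first alternative in the irreducible clause of Definition~\ref{def:separable} with pivot $\alpha_i$. Therefore $(w,\Phi)$ and $(w',\Phi)$ are both separable, and these are the two claimed elements.

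I do not expect a genuine obstacle here. The only point needing care is biconvexity of the disjoint union $A=\bigsqcup_j A_j$, and this is immediate from the Dynkin-diagram facts recalled in the proof of Lemma~\ref{lem:construct}; everything else is bookkeeping against the definition. In effect the corollary is simply a translation of Lemma~\ref{lem:construct} into the language of separability.
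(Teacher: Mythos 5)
Your proof is correct and is exactly the argument the paper intends: the corollary is stated without proof as an immediate consequence of Lemma~\ref{lem:construct}, Proposition~\ref{prop:biconvex}, and Definition~\ref{def:separable}, and your assembly of the componentwise inversion sets into a biconvex set, followed by checking the pivot condition at $\alpha_i$ for both choices, is precisely that omitted verification.
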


\begin{proposition} \label{prop:symmetries}
Suppose that $(w,\Phi)$ is separable, and let $w_0$ denote the longest element in $W(\Phi)$, then $w_0w$ is also separable.
\end{proposition}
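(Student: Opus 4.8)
The plan is to induct on the rank $|\Delta|$ of $\Phi$, working entirely with inversion sets. The key elementary fact is that left multiplication by $w_0$ complements the inversion set: for any $u\in W(\Phi)$,
\[
I_\Phi(w_0 u)=\Phi^+\setminus I_\Phi(u),
\]
since $(w_0u)\alpha=w_0(u\alpha)$ lies in $\Phi^-$ exactly when $u\alpha\in\Phi^+$, i.e.\ exactly when $\alpha\notin I_\Phi(u)$.

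Next I would record how this interacts with the restriction map. Let $\Psi\subseteq\Phi$ be the root subsystem generated by a subset of $\Delta$, and let $w_0(\Psi)$ be the longest element of $W(\Psi)$. Then for any $u\in W(\Phi)$,
\[
I_\Psi\bigl((w_0u)|_\Psi\bigr)=I_\Phi(w_0u)\cap\Psi^+=\Psi^+\setminus\bigl(I_\Phi(u)\cap\Psi^+\bigr)=\Psi^+\setminus I_\Psi(u|_\Psi)=I_\Psi\bigl(w_0(\Psi)\,(u|_\Psi)\bigr),
\]
where the first and fourth equalities are the definition of restriction, the second is the complementation identity above (together with $\Psi^+\subseteq\Phi^+$), and the last is the same identity applied inside $W(\Psi)$. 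By Proposition~\ref{prop:biconvex} an element is determined by its inversion set, so $(w_0u)|_\Psi=w_0(\Psi)\,(u|_\Psi)$; that is, restriction commutes with the complementation map $u\mapsto w_0u$.

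With these two facts in hand the induction is short. If $\Phi$ is of type $A_1$ then every element, in particular $w_0w$, is separable. If $\Phi=\bigoplus_i\Phi_i$ is reducible, then $(w_0w)|_{\Phi_i}=w_0(\Phi_i)(w|_{\Phi_i})$; since $w|_{\Phi_i}$ is separable and $\mathrm{rank}(\Phi_i)<\mathrm{rank}(\Phi)$, the inductive hypothesis gives that each $(w_0w)|_{\Phi_i}$ is separable, hence $w_0w$ is separable. Finally, suppose $\Phi$ is irreducible with pivot $\alpha_i\in\Delta$ and let $\Phi'$ be generated by $\Delta\setminus\{\alpha_i\}$. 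Then $(w_0w)|_{\Phi'}=w_0(\Phi')(w|_{\Phi'})$ is separable by the inductive hypothesis (as $w|_{\Phi'}$ is separable and $\mathrm{rank}(\Phi')=\mathrm{rank}(\Phi)-1$); moreover, since $I_\Phi(w_0w)=\Phi^+\setminus I_\Phi(w)$, the set $\{\beta\in\Phi^+:\beta\geq\alpha_i\}$ is contained in $I_\Phi(w_0w)$ if and only if it is disjoint from $I_\Phi(w)$, and is disjoint from $I_\Phi(w_0w)$ if and only if it is contained in $I_\Phi(w)$. So exactly one of the two pivot alternatives holds for $w_0w$ (namely the opposite one to the alternative holding for $w$), making $\alpha_i$ a pivot for $w_0w$. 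Therefore $w_0w$ is separable.

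I do not expect a genuine obstacle here; the one step that needs care is the commutation of restriction with $w_0$-multiplication, i.e.\ verifying the displayed chain of equalities and, in particular, that $w_0(\Psi)\,(u|_\Psi)$ is the correct element — which reduces to applying the complementation identity within the smaller Weyl group $W(\Psi)$ and invoking uniqueness of an element given its inversion set.
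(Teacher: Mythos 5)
Your proof is correct and takes essentially the same route as the paper, which simply notes that $I_{\Phi}(w_0w)=\Phi^+\setminus I_{\Phi}(w)$ and that the claim then follows immediately from the recursive definition of separability. You have merely spelled out the induction (the compatibility of restriction with complementation and the flipping of the pivot condition) that the paper leaves implicit.
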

\begin{proof}
It is clear from the definitions that $I_{\Phi}(w_0w)=\Phi^+ \setminus I_{\Phi}(w)$; the proposition then follows immediately from the recursive definition of separable.
\end{proof}

\begin{example}[Type $A_{n-1}$] \label{ex:type-A}
Let us trace through the definition in type $A_{n-1}$ and see that it coincides with separable permutations. Let $\Delta=\{\alpha_1,\ldots,\alpha_{n-1}\}$ where $\alpha_i$ can be represented as $e_i-e_j$ written in coordinate vectors. Let $w\in\S_n$ be a candidate for separable element and let $\alpha_m$ be its pivot. Now, $\Delta'=\{\alpha_1,\ldots,\alpha_{m-1}\}\cup\{\alpha_{m+1},\ldots,\alpha_{n-1}\}$ generates a root system of type $A_{m-1}\oplus A_{n-m-1}$. We need that $w|_{A_{m-1}}$, which is the permutation restricted to the first $m$ indices, and $w|_{A_{n-m-1}}$, which is the permutation restricted to the last $n-m$ indices, are both separable. Moreover, from the condition on $I_{\Phi}(w)$, we also require that either $w(i)>w(j)$ for all $1\leq i\leq m<m+1\leq j\leq n$ or $w(i)<w(j)$ for all $1\leq i\leq m<m+1\leq j\leq n$.  That is, the permutation matrix of $w$ can be recursively constructed by applying the operations of direct sum (placing the matrices of two smaller separable permutations as blocks on the diagonal) and skew sum (placing these blocks on the antidiagonal), starting with the unique permutation of one element as a base case.  This was the original definition of separable permutations, and is easily shown to be equivalent to avoiding 3142 and 2413 (see, for example, Lemma 2.3 of \cite{Wei2012Product}).
\end{example}
\begin{example}[Type $B_2$]\label{ex:B2}
Assume $\Delta=\{\alpha_1,\alpha_2\}$ where $\alpha_1=e_1-e_2$ and $\alpha_2=e_2$. Then $\Phi^+=\{\alpha_1,\alpha_2,\alpha_1+\alpha_2,\alpha_1+2\alpha_2\}$. Let us write down all possible separable elements by their inversion sets as suggested in Lemma~\ref{lem:construct}. If we choose $\alpha_1$ as a pivot, then $\Delta'=\{\alpha_2\}$ has rank 1 so $I_{\Phi'}(w|_{\Phi'})$ is either $\emptyset$ or $\{\alpha_2\}$. Now to obtain a separable elements, we either add to $I_{\Phi}(w)$ all positive roots supported on $\alpha_1$, or add nothing. We then end up with $I_{\Phi}(w)$ equal to one of $\emptyset$, $\{\alpha_2\}$, $\{\alpha_1,\alpha_1+\alpha_2,\alpha_1+2\alpha_2\}$ or $\Phi^+$. Similarly, if we choose the pivot at $\alpha_2$, we end up with $I_{\Phi}(w)$ equal to one of $\emptyset$, $\{\alpha_1\}$, $\{\alpha_2,\alpha_1+\alpha_2,\alpha_1+2\alpha_2\}$ or $I_{\Phi}(w)$. They are shown in Figure~\ref{fig:B2} via inversion sets.

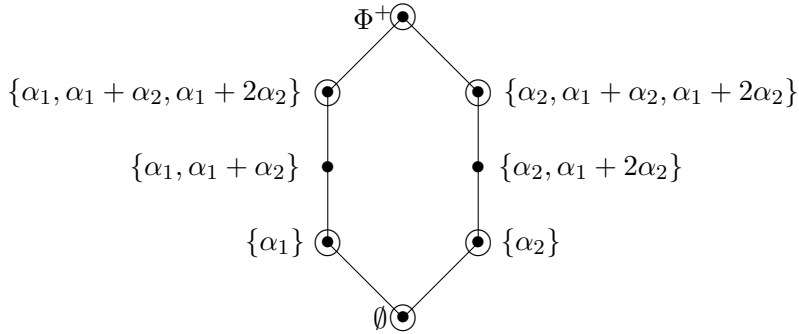
\begin{figure}[ht]
\centering
\begin{tikzpicture}
\node at (0,0){$\bullet$};
\node at (0,0){$\bigcirc$};
\node at (-1,1){$\bullet$};
\node at (-1,1){$\bigcirc$};
\node at (-1,2){$\bullet$};
\node at (-1,3){$\bullet$};
\node at (-1,3){$\bigcirc$};
\node at (1,1){$\bullet$};
\node at (1,1){$\bigcirc$};
\node at (1,2){$\bullet$};
\node at (1,3){$\bullet$};
\node at (1,3){$\bigcirc$};
\node at (0,4){$\bullet$};
\node at (0,4){$\bigcirc$};
\draw (0,0)--(-1,1)--(-1,3)--(0,4)--(1,3)--(1,1)--(0,0);
\node at (-1.7,1){$\{\alpha_1\}$};
\node at (-2.5,2){$\{\alpha_1,\alpha_1+\alpha_2\}$};
\node at (-3.3,3){$\{\alpha_1,\alpha_1+\alpha_2,\alpha_1+2\alpha_2\}$};
\node at (1.7,1){$\{\alpha_2\}$};
\node at (2.5,2){$\{\alpha_2,\alpha_1+2\alpha_2\}$};
\node at (3.3,3){$\{\alpha_2,\alpha_1+\alpha_2,\alpha_1+2\alpha_2\}$};
\node at (-0.3,0){$\emptyset$};
\node at (-0.4,4){$\Phi^+$};
\end{tikzpicture}
\caption{Weak order of type $B_2$ labeled by inversion sets, where separable elements are circled.}
\label{fig:B2}
\end{figure}
\end{example}

\begin{example}
For any root system $\Phi$ and any $J \subseteq \Delta$ the elements $w_0(J)$ and $w_0^J$ discussed in Section \ref{sec:w0J} are separable.  For $w_0(J)$, all elements in $\Delta \setminus J$ are pivots, and, once these are removed, the restriction of $w_0(J)$ to each irreducible component will be the longest element of the corresponding Weyl group, which is clearly separable.  For $w_0^J$ a similar argument applies, or one can note that $w_0^J=w_0w_0(J)$ and apply Proposition \ref{prop:symmetries}.  Theorem \ref{thm:main} below greatly generalizes Proposition \ref{prop:w0J-case} to all separable elements.
\end{example}

We are now ready to state our first main theorem, which generalizes a result of Fan Wei \cite{Wei2012Product} for separable permutations, answering the open problem of how to extend this result to other Weyl groups.

\begin{theorem}\label{thm:main}
Let $w\in W(\Phi)$ be separable. Then the weak order upper order ideal $V_{w}$ and lower order ideal $\Lambda_w$ are both rank symmetric and rank unimodal. In addition,
\[
F(V_w)F(\Lambda_w)=F(W(\Phi)).
\]
\end{theorem}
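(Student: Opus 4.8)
The plan is to induct on the rank of $\Phi$, following the recursive structure of Definition~\ref{def:separable}. The base case $\Phi$ of type $A_1$ is trivial: $W$ has two elements, $V_w$ and $\Lambda_w$ are singletons or the whole chain, and the identity $F(V_w)F(\Lambda_w)=1\cdot(1+q)=F(W)$ (or the symmetric variant) holds. For the inductive step there are two cases corresponding to the two non-base clauses of the definition.

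\textbf{The reducible case.} Suppose $\Phi=\bigoplus_i\Phi_i$ and each $w|_{\Phi_i}$ is separable. One knows that the weak order on $W(\Phi)$ is the direct product of the weak orders on the $W(\Phi_i)$, with $w$ corresponding to the tuple $(w|_{\Phi_i})_i$; hence $V_w=\prod_i V_{w|_{\Phi_i}}$ and $\Lambda_w=\prod_i\Lambda_{w|_{\Phi_i}}$ as ranked posets, so $F(V_w)=\prod_iF(V_{w|_{\Phi_i}})$ and likewise for $\Lambda_w$. Applying the inductive hypothesis to each factor gives rank-symmetry and rank-unimodality of each factor (so Lemma~\ref{lem:sym} yields it for the products) and the product identity $F(V_{w|_{\Phi_i}})F(\Lambda_{w|_{\Phi_i}})=F(W(\Phi_i))$; multiplying over $i$ and using $F(W(\Phi))=\prod_iF(W(\Phi_i))$ finishes this case.

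\textbf{The irreducible case with a pivot $\alpha_i$.} Let $\Delta'=\Delta\setminus\{\alpha_i\}$, $\Phi'$ the subsystem it generates, and set $U:=\{\beta\in\Phi^+:\beta\geq\alpha_i\}$, so by Lemma~\ref{lem:construct} (its proof) $\Phi^+=(\Phi')^+\sqcup U$. Take the case $U\subseteq I_\Phi(w)$ (the case $U\cap I_\Phi(w)=\emptyset$ follows by applying Proposition~\ref{prop:symmetries} and duality). I would argue that in this case $w$ lies in the parabolic quotient-type position relative to $\alpha_i$: more precisely, I want to show $V_w\cong V_{w|_{\Phi'}}$ computed inside $W(\Phi')$ while $\Lambda_w$ fibers over $\Lambda_{w|_{\Phi'}}$ with fiber the interval below $w_0(\{\alpha_i\}\cup\cdots)$ — i.e. I want a factorization $F(\Lambda_w)=F(\Lambda_{w|_{\Phi'}})\cdot g$ and $F(V_w)=F(V_{w|_{\Phi'}})$ where the ``missing'' factor $g$ together with $F(V_{w|_{\Phi'}})$ reassembles $F(W(\Phi))/F(W(\Phi'))$. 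Concretely, I expect to use that any biconvex set containing $U$ decomposes as $U$ together with a biconvex subset of $(\Phi')^+$, and that biconvex subsets of $\Phi^+$ contained in $(\Phi')^+$ are exactly inversion sets of elements of $W(\Phi')$ (again Lemma~\ref{lem:construct}); this sets up an order isomorphism on the relevant ideals. The rank generating function of the complementary factor should be recognized as $F(W^J)$ or $F(W_J)$ for an appropriate $J$, which is rank-symmetric and rank-unimodal by \cite{Stanley1980Weyl}, exactly as in the proof of Proposition~\ref{prop:w0J-case}. Then rank-symmetry and rank-unimodality of $F(V_w)$ and $F(\Lambda_w)$ follow by combining the inductive hypothesis with Lemma~\ref{lem:sym}, and the product identity $F(V_w)F(\Lambda_w)=F(W(\Phi))$ follows by multiplying the two factorizations and using \eqref{eq:subgroup-times-quotient} together with the inductive identity in $W(\Phi')$.

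\textbf{Main obstacle.} The delicate point is the irreducible case: establishing precisely how $V_w$ and $\Lambda_w$ factor through the restriction to $\Phi'$ and identifying the complementary factor with a parabolic quotient's rank generating function. One must carefully track which positive roots can be added to or removed from $I_\Phi(w)$ while staying biconvex, using that $U=\Phi^+\setminus(\Phi')^+$ is ``upward closed'' in the root poset and that $w$ either contains all of $U$ or none of it. I would not expect this to require case-by-case work across the Cartan--Killing types — the Dynkin-diagram/support argument behind Lemma~\ref{lem:construct} should suffice — but making the poset isomorphism and the resulting polynomial identity precise, and correctly handling the interaction between the pivot clause and the restriction $w|_{\Phi'}$ (which may itself be reducible), is where the real content lies.
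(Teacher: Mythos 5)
Your proposal is correct in substance and follows essentially the same route as the paper: induction on rank, the direct-product argument in the reducible case, and, in the irreducible case with pivot $\alpha_i$, the factorizations $F(V_w)=F(V_{w|_{\Phi'}})$ and $F(\Lambda_w)=F(\Lambda_{w|_{\Phi'}})\cdot F(W(\Phi))/F(W(\Phi'))$ when $U=\{\beta\in\Phi^+:\beta\geq\alpha_i\}\subseteq I_{\Phi}(w)$, with the opposite case handled via $w\mapsto w_0w$. The one step you leave as an expectation is precisely the paper's Lemma \ref{lem:notdepend}: for \emph{every} $u'\in W(\Phi')$, not just $u'=\id$, the fiber $\{u\in W(\Phi):u|_{\Phi'}=u'\}$ of the restriction map has rank generating function $F(W(\Phi))/F(W(\Phi'))=F(W^{\Delta'})$, which is symmetric and unimodal; this constancy over $u'$ is what makes $F(\Lambda_w)$ factor. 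The paper proves it by a length-preserving bijection between fibers (Lemma \ref{lem:restrictProduct}); your plan to recognize the complementary factor as a parabolic quotient also works, but note that the mechanism is not the biconvex decomposition you cite (that argument only gives the isomorphism $V_w\cong V_{w|_{\Phi'}}$, the ideal that does \emph{not} fiber): rather one uses that restriction to the parabolic subsystem $\Phi'$ coincides with $u\mapsto u_{\Delta'}$ in the factorization $u=u^{\Delta'}u_{\Delta'}$ of (\ref{eq:lengths-add}), so the fibers are the cosets $W^{\Delta'}u'$ with lengths adding. Finally, the complementary factor $g$ must equal $F(W(\Phi))/F(W(\Phi'))$ on its own (not ``together with $F(V_{w|_{\Phi'}})$''); with that correction, multiplying the two factorizations and invoking the inductive identity in $W(\Phi')$, Lemma \ref{lem:sym}, and (\ref{eq:subgroup-times-quotient}) yields the theorem exactly as in the paper.
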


In light of Theorem \ref{thm:main}, it makes sense to ask whether the posets $\Lambda_w$ and $V_w$ are \emph{strongly Sperner} for $w$ separable.  A poset is strongly Sperner if for all $k=1,2,...$ no union of $k$ antichains is larger than the union of the largest $k$ ranks.  A poset which is rank-symmetric, rank-unimodal, and strongly Sperner is called \emph{Peck}.  Parabolic quotients in the strong order \cite{Stanley1980Weyl} and the whole weak order in type $A$ \cite{sperner-paper} are known to be Peck. 

\begin{question}
Let $(w,\Phi)$ be separable; are $\Lambda_w$ and $V_w$ strongly Sperner?
\end{question}

\section{Proof of Theorem \ref{thm:main}} \label{sec:proof-of-main-thm}
We need a few lemmas to start with. 
\begin{lemma}\label{lem:restrictProduct}
Let $\Phi$, $\Delta$, $\Delta'\subset\Delta$, $\Phi'$ be as in Section~\ref{sub:root} and let $w\in W(\Phi)$. Take a simple root $\alpha_i\in\Delta$ and let $s_{\alpha_i}$ be the corresponding simple transposition. Assume $\ell(ws_{\alpha_i})=\ell(w)+1$. Then
\begin{enumerate}
\item if $\alpha_i\in\Delta'$, $(ws_{\alpha_i})|_{\Phi'}=(w|_{\Phi'})s_{\alpha_i}$ and $\ell((w|_{\Phi'})s_{\alpha_i})=\ell(w|_{\Phi'})+1$;
\item if $\alpha_i$ is orthogonal to all roots in $\Delta'$, that is, if $\alpha_i\notin\Delta'$ and there are no edges between $\alpha_i$ and $\Delta'$ in the Dynkin diagram, $(ws_{\alpha_i})|_{\Phi'}=w|_{\Phi'}.$
\end{enumerate}
\end{lemma}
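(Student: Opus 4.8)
The plan is to work entirely at the level of inversion sets, using Proposition \ref{prop:weak} and Proposition \ref{prop:biconvex} together with the definition of the restriction map. Recall that for any $u \in W(\Phi)$ and any simple reflection $s_{\alpha_i}$ with $\ell(us_{\alpha_i}) = \ell(u)+1$, right multiplication by $s_{\alpha_i}$ acts on inversion sets by $I_\Phi(us_{\alpha_i}) = s_{\alpha_i}^{-1}(I_\Phi(u)) \cup \{\alpha_i\} = I_\Phi(u) \sqcup \{\alpha_i\}$ when we identify inversion sets with subsets of $\Phi^+$ in the standard way; more precisely, the new inversion set is obtained from $I_\Phi(u)$ by adjoining $\alpha_i$ and then applying the permutation $s_{\alpha_i}$ of $\Phi^+ \setminus \{\alpha_i\}$ induced by the reflection. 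The key combinatorial input, which I would state cleanly first, is that the restriction map commutes with this operation in the two stated cases because of how $s_{\alpha_i}$ interacts with the subspace spanned by $\Delta'$.

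For part (1), where $\alpha_i \in \Delta'$, I would argue as follows. Since $s_{\alpha_i}$ with $\alpha_i \in \Delta'$ preserves the subspace $\mathrm{span}(\Delta')$ and permutes $(\Phi')^+ \setminus \{\alpha_i\}$, we have $s_{\alpha_i}(I_\Phi(u) \cap (\Phi')^+) = s_{\alpha_i}(I_\Phi(u)) \cap (\Phi')^+$ for roots other than $\alpha_i$, and $\alpha_i \in (\Phi')^+$. Combining this with the inversion-set description of right multiplication gives $I_\Phi(ws_{\alpha_i}) \cap (\Phi')^+ = \bigl(I_\Phi(w) \cap (\Phi')^+\bigr) \sqcup \{\alpha_i\}$ with the appropriate $s_{\alpha_i}$-twist, which is exactly $I_{\Phi'}\bigl((w|_{\Phi'})s_{\alpha_i}\bigr)$ provided $\ell((w|_{\Phi'})s_{\alpha_i}) = \ell(w|_{\Phi'})+1$. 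That length statement is equivalent, by the standard criterion, to $\alpha_i \notin I_{\Phi'}(w|_{\Phi'})$, i.e. $\alpha_i \notin I_\Phi(w) \cap (\Phi')^+$; but $\alpha_i \notin I_\Phi(w)$ holds precisely because $\ell(ws_{\alpha_i}) = \ell(w)+1$. So the length claim comes for free from the hypothesis, and then the identity of inversion sets forces $(ws_{\alpha_i})|_{\Phi'} = (w|_{\Phi'})s_{\alpha_i}$ by uniqueness (Proposition \ref{prop:biconvex}).

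For part (2), where $\alpha_i$ is orthogonal to every root in $\Delta'$, the point is that $s_{\alpha_i}$ fixes $\mathrm{span}(\Delta')$ pointwise, so it fixes every root of $\Phi'$; also $\alpha_i \notin (\Phi')^+$ since $\alpha_i \notin \Delta'$ and, being simple, $\alpha_i$ is not a nontrivial combination of other simple roots, and having no edges to $\Delta'$ means no root of $\Phi'$ can involve $\alpha_i$. Therefore passing from $I_\Phi(w)$ to $I_\Phi(ws_{\alpha_i})$ — which only adjoins $\alpha_i$ and permutes the rest via $s_{\alpha_i}$ — has no effect after intersecting with $(\Phi')^+$: the adjoined root $\alpha_i$ is outside $(\Phi')^+$, and the reflection $s_{\alpha_i}$ acts trivially on $(\Phi')^+$. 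Hence $I_\Phi(ws_{\alpha_i}) \cap (\Phi')^+ = I_\Phi(w) \cap (\Phi')^+$, and uniqueness gives $(ws_{\alpha_i})|_{\Phi'} = w|_{\Phi'}$.

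The main obstacle I anticipate is being careful and correct about the exact formula for how right multiplication by $s_{\alpha_i}$ transforms an inversion set when $\ell(ws_{\alpha_i}) = \ell(w)+1$ — in particular the fact that $s_{\alpha_i}$ permutes $\Phi^+ \setminus \{\alpha_i\}$ — and then checking that this permutation restricts correctly to $(\Phi')^+$ in case (1) and acts trivially on $(\Phi')^+$ in case (2). Everything else is bookkeeping with biconvex sets and the standard length criterion; I would isolate the inversion-set formula as a preliminary observation (citing, e.g., \cite{Bjorner-Brenti} or \cite{Humphreys1978Introduction}) so the two cases become short.
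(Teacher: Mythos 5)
Your proposal is correct and follows essentially the same route as the paper's proof: both work entirely with inversion sets, use the formula $I_{\Phi}(ws_{\alpha_i})=s_{\alpha_i}I_{\Phi}(w)\cup\{\alpha_i\}$, exploit that $s_{\alpha_i}$ permutes $(\Phi')^+\setminus\{\alpha_i\}$ in case (1) and fixes $(\Phi')^+$ pointwise in case (2), and obtain the length claim in (1) from $\alpha_i\notin I_{\Phi}(w)$, concluding by uniqueness of the element with a given biconvex inversion set. The only thing to fix in writing it up is the loose intermediate equality $s_{\alpha_i}^{-1}(I_{\Phi}(u))\cup\{\alpha_i\}=I_{\Phi}(u)\sqcup\{\alpha_i\}$, which is false as stated; keep only your subsequent precise formulation (adjoin $\alpha_i$ and apply the permutation of $\Phi^+\setminus\{\alpha_i\}$ to $I_{\Phi}(u)$).
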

\begin{proof}
In general, for $u\in W(\Phi)$ and $\alpha_i\in\Delta$, if $\ell(us_{\alpha_i})=\ell(u)+1$, then $I_{\Phi}(us_{\alpha_i})=s_{\alpha_i}I_{\Phi}(u)\cup\{\alpha_i\}$ (see \cite{Humphreys1978Introduction}). We also know that $s_{\alpha_i}$ permutes $\Phi^+\setminus\{\alpha_i\}$ and sends $\alpha_i$ to $-\alpha_i$. 

We show two Weyl group elements are the same by comparing their inversion sets.  For (1), 
\begin{align*}
    I_{\Phi'}\big((ws_{\alpha_i}|_{\Phi'})\big)&=I_{\Phi}(ws_{\alpha_i})\cap(\Phi')^+\\
    &=\big(s_{\alpha_i}I_{\Phi}(w)\cup\{\alpha_i\}\big)\cap(\Phi')^+\\
    &=\big(s_{\alpha_i}I_{\Phi}(w)\cap(\Phi')^+\big)\cup\{\alpha_i\},
\end{align*}
where the last step follows from the fact that $\alpha_i\in(\Phi')^+$. Recall also that $s_{\alpha_i}$ permutes $(\Phi')^+\setminus\{\alpha_i\}$. Since $\alpha_i\notin s_{\alpha_i}I_{\Phi}(w)$, we have 
\begin{align*}
    s_{\alpha_i}I_{\Phi}(w)\cap(\Phi')^+&=s_{\alpha_i}I_{\Phi}(w)\cap s_{\alpha_i}(\Phi')^+\\
    &=s_{\alpha_i}\big(I_{\Phi}(w)\cap(\Phi')^+\big)\\
    &=s_{\alpha_i}I_{\Phi'}(w|_{\Phi'}).
\end{align*}
Therefore, $I_{\Phi'}\big((ws_{\alpha_i}|_{\Phi'})\big)=s_{\alpha_i}I_{\Phi'}(w|_{\Phi'})\cup\{\alpha_i\}$. Since $\alpha_i\notin I_{\Phi}(w)$, we have that $\alpha_i\notin I_{\Phi'}(w|_{\Phi'})$ and so $\ell((w|_{\Phi'})s_{\alpha_i})=\ell(w|_{\Phi'})+1$. With this, $s_{\alpha_i}I_{\Phi'}(w|_{\Phi'})\cup\{\alpha_i\}=I_{\Phi'}((w|_{\Phi'})s_{\alpha_i})$. We can thus conclude as desired that $(ws_{\alpha_i})|_{\Phi'}=(w|_{\Phi'})s_{\alpha_i}$.

For (2), we similarly observe that 
\begin{align*}
    I_{\Phi'}\big((ws_{\alpha_i}|_{\Phi'})\big)&=\big(s_{\alpha_i}I_{\Phi}(w)\cup\{\alpha_i\}\big)\cap(\Phi')^+\\
    &=s_{\alpha_i}I_{\Phi}(w)\cap(\Phi')^+,
\end{align*}
as $\alpha_i\notin(\Phi')^+$.  Since $\alpha_i$ is orthogonal to all roots in $\Phi'$, $s_{\alpha_i}$ fixes every root in $(\Phi')^+$. Thus, 
\begin{align*}
    s_{\alpha_i}I_{\Phi}(w)\cap(\Phi')^+ &=s_{\alpha_i}I_{\Phi}(w)\cap s_{\alpha_i}(\Phi')^+ \\
    &=s_{\alpha_i}\big(I_{\Phi}(w)\cap(\Phi')^+\big) \\
    &=I_{\Phi}(w)\cap(\Phi')^+=I_{\Phi'}(w|_{\Phi'}).
\end{align*}
Therefore $(ws_{\alpha_i})|_{\Phi'}=w|_{\Phi'}$. 
\end{proof}

\begin{lemma}\label{lem:notdepend}
Let $\Phi$, $\Delta$, $\Delta'\subset\Delta$, $\Phi'$ be as in Section~\ref{sub:root}. Fix $w'\in W(\Phi')$. Then
$$\sum_{w\in W(\Phi),\ w|_{\Phi'}=w'}q^{\ell(w)-\ell(w')}=\frac{F(W(\Phi))}{F(W(\Phi'))}$$
where $\ell$ denotes the lengths in respective Weyl groups. Moreover, this is a symmetric and unimodal polynomial.
\end{lemma}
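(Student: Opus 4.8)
The plan is to recognize the restriction map $w\mapsto w|_{\Phi'}$ as the projection onto the parabolic component, and thereby identify the sum with the rank generating function of a parabolic quotient. Set $J:=\Delta'$, so that $W(\Phi')=W_J$ is a (standard) parabolic subgroup of $W:=W(\Phi)$ and the length function of $W_J$ agrees with that of $W$. Write $w=w^Jw_J$ for the parabolic decomposition of $w\in W$. The main step is to prove the identity $w|_{\Phi'}=w_J$. Granting this, for fixed $w'\in W(\Phi')$ the fiber $\{w\in W(\Phi): w|_{\Phi'}=w'\}$ is, by uniqueness of parabolic decompositions, exactly $\{uw': u\in W^J\}$; and for $u\in W^J$ the decomposition of $uw'$ is $u\cdot w'$, so $\ell(uw')-\ell(w')=\ell(u)$ by \eqref{eq:lengths-add}. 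Hence
\[
\sum_{\substack{w\in W(\Phi)\\ w|_{\Phi'}=w'}} q^{\ell(w)-\ell(w')}=\sum_{u\in W^J}q^{\ell(u)}=F(W^J)=\frac{F(W(\Phi))}{F(W(\Phi'))},
\]
the last equality being \eqref{eq:subgroup-times-quotient}; in particular the left-hand side does not depend on $w'$.

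To establish $w|_{\Phi'}=w_J$ I would compare inversion sets and invoke Proposition~\ref{prop:biconvex}: it suffices to show $I_\Phi(w)\cap(\Phi')^+=I_{\Phi'}(w_J)$. For $\alpha\in(\Phi')^+$ we have $w\alpha=w^J(w_J\alpha)$ with $w_J\alpha\in\Phi'$ (as $W_J$ preserves $\Phi'$), so the claim reduces to the standard fact that $w^J\in W^J$ --- having no right descent in $J$ --- sends $(\Phi')^+$ into $\Phi^+$, and hence $(\Phi')^-$ into $\Phi^-$. That fact in turn follows by induction on the root poset of $\Phi'$: a non-simple positive root of $\Phi'$ can be written as a simple root in $\Delta'$ plus a positive root of $\Phi'$, the transformation $w^J$ sends both summands into $\Phi^+$ (the first by the base case, the second by induction), and a root that is a sum of positive roots is positive. (Alternatively, one can simply cite this compatibility of the flattening map with parabolic projection, which is standard.)

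Finally, the symmetry and unimodality of the polynomial require nothing new: $F(W^J)$ is rank-symmetric and rank-unimodal by Stanley's theorem \cite{Stanley1980Weyl}, exactly as already used in the proof of Proposition~\ref{prop:w0J-case}. Since the argument is essentially an assembly of standard facts about parabolic quotients, I do not expect a serious obstacle; the one point needing genuine (if routine) care is the identification $w|_{\Phi'}=w_J$ of the previous paragraph, because the restriction map is defined inversion-theoretically whereas the parabolic decomposition is defined through reduced words, so one must bridge the two descriptions.
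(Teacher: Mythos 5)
Your argument is correct, but it follows a genuinely different route from the paper's. You prove the identification $w|_{\Phi'}=w_J$ (with $J=\Delta'$ and $w=w^Jw_J$ the parabolic decomposition), using the standard fact that the minimal coset representative $w^J$ maps $(\Phi')^+$ into $\Phi^+$; your induction on the root poset of $\Phi'$ for that fact is fine. Granting this, every fiber of the restriction map is a coset $W^Jw'$ with lengths adding by \eqref{eq:lengths-add}, so the fiber polynomial equals $F(W^J)=F(W(\Phi))/F(W(\Phi'))$ by \eqref{eq:subgroup-times-quotient}, and symmetry and unimodality follow from Stanley's theorem, exactly as in Proposition~\ref{prop:w0J-case}. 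The paper never establishes $w|_{\Phi'}=w_J$: it instead uses Lemma~\ref{lem:restrictProduct} to transport fibers along simple reflections --- for $\alpha_i\in\Delta'$ with $\ell(w's_{\alpha_i})=\ell(w')+1$ the map $w\mapsto ws_{\alpha_i}$ is a length-raising bijection $\Ind(w')\to\Ind(w's_{\alpha_i})$ --- so the fiber polynomial $f_{w'}$ is constant on $W(\Phi')$; summing over $w'$ then forces $f=F(W(\Phi))/F(W(\Phi'))$, and only for the symmetry and unimodality claim does the paper identify the single fiber $\Ind(\id_{\Phi'})$ with the parabolic quotient $W(\Phi)^{\Delta'}$. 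What your approach buys is a sharper structural statement: every fiber, not just the one over the identity, is graded-isomorphic to $W^J$, which delivers the formula, its independence of $w'$, and the symmetry statement in one stroke; the cost is precisely the bridge you flag between the inversion-theoretic definition of restriction and the reduced-word definition of the parabolic decomposition, which you handle correctly. The paper's transport-by-simple-reflections argument avoids that compatibility statement altogether, reusing its Lemma~\ref{lem:restrictProduct}, at the price of obtaining the value of the fiber polynomial only indirectly, by dividing $F(W(\Phi))$ by $F(W(\Phi'))$.
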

\begin{proof}
By definition, $I_{\Phi}(w)\supset I_{\Phi'}(w|_{\Phi'})$ so $\ell(w)\geq\ell(w|_{\Phi'})$. This means our left hand side is indeed a polynomial. For $w\in W(\Phi')$, let $\Ind(w'):=\{w\in W(\Phi):w|_{\Phi'}=w'\}$. Suppose $\ell(w's_{\alpha_i})=\ell(w')+1$ for some $\alpha_i\in\Delta'$. We claim that there is a bijection $\varphi:\Ind(w')\rightarrow\Ind(w's_{\alpha_i})$ via $w\mapsto ws_{\alpha_i}$ such that $\ell(ws_{\alpha_i})=\ell(w)+1$. This follows immediately from Lemma~\ref{lem:restrictProduct}. Namely, for $w\in\Ind(w')$, $(ws_{\alpha_i})|_{\Phi'}=(w|_{\Phi'})s_{\alpha_i}=w's_{\alpha_i}$ so $ws_{\alpha_i}\in\Ind(w's_{\alpha_i})$. The inverse map is given in the same way. As for the length, if $\ell(w)=\ell(ws_{\alpha_i})+1$, by using Lemma~\ref{lem:restrictProduct} on $ws_{\alpha_i}$, we find $\ell(w')=\ell(w's_{\alpha_i})+1$, which is a contradiction. Thus, we must have $\ell(ws_{\alpha_i})=\ell(w)+1$ instead.

To interpret this bijection in another way, let $f_{w'}:=\sum_{w\in\Ind(w')}q^{\ell(w)-\ell(w')}$ be the left hand side of the equation in the lemma statement. Then $f_{w'}=f_{w's_{\alpha_i}}$. Since $s_{\alpha_i}$'s generate $W(\Phi')$ for $\alpha_i\in\Delta'$, $f_{w'}=f$ is constant on $W(\Phi')$. As a result,
\begin{align*}
F(W(\Phi))=&\sum_{w\in W(\Phi)}q^{\ell(w)}=\sum_{w'\in W(\Phi')}\sum_{w\in\Ind(w')}q^{\ell(w)}\\
=&\sum_{w'\in W(\Phi')}q^{\ell(w')}\sum_{w\in\Ind(w')}q^{\ell(w)-\ell(w')}\\
=&\sum_{w'\in W(\Phi')}q^{\ell(w')}\cdot f=f\cdot F(W(\Phi')).
\end{align*}
This means $f_{w'}=f=F(W(\Phi))/F(W(\Phi'))$ as desired. And this shows $f$ is indeed a polynomial.

To show that $f$ is symmetric and unimodal, let us consider the special case where $w'=\id$. By definition, $\Ind(\id_{\Phi'})$ consists of $w\in W(\Phi)$ such that $w|_{\Phi'}=\id_{\Phi'}$. These are precisely the set of $w$'s which do not have inversions at $\alpha_j$ for all $\alpha_j\in\Delta'$. In other words, the set $\Ind(\id_{\Phi'})$ equals the parabolic quotient $W(\Phi)^{\Delta'}$. By Proposition \ref{prop:w0J-case}:
$$f=f_{\id_{\Phi'}}=\sum_{w\in\Ind(\id_{\Phi'})}q^{\ell(w)}=F(W^{\Delta'})$$
is symmetric and unimodal.
\end{proof}

We are now ready to finish the proof of Theorem \ref{thm:main}.

\begin{proof}[Proof of Theorem~\ref{thm:main}]
Unsurprisingly as Definition~\ref{def:separable} suggests, we are going to proceed by induction on the rank of $\Phi$. The base case is $\Phi=A_1$, whose Weyl group consists of two elements $\id<w_0$ and both are separable. Clearly the results hold.

Now assume $\Phi=\bigoplus\Phi_i$ is reducible. Then $W(\Phi)=\prod W(\Phi_i)$ as groups and as posets. Therefore, $F(V_w)=\prod F(V_{w|_{\Phi_i}})$ and $F(\Lambda_w)=\prod F(\Lambda_{w|_{\Phi_i}})$. By induction hypothesis and Lemma~\ref{lem:sym}, both are symmetric and unimodal. Moreover, $$F(V_w)F(\Lambda_w)=\prod_i F(V_{w|_{\Phi_i}})F(\Lambda_{w|_{\Phi_i}})=\prod_i F(W(\Phi_i))=F(W(\Phi))$$.

The key case is when $\Phi$ is irreducible. Let the pivot be $\alpha_i\in\Delta$. As before, let $\Delta'=\Delta\setminus\{\alpha_i\}$ which generates a root subsystem $\Phi'$ of $\Phi$. There are two analogous cases. First assume that $\{\beta\in\Phi^+:\beta>\alpha_i\}\cap I_{\Phi}(w)=\emptyset$. This also means $I_{\Phi}(w)\subset(\Phi')^+$. By characterization of Weyl group elements using inversion sets, we obtain $F(\Lambda_w)=F(\Lambda_{w|_{\Phi'}})$. For the upper order ideal $V_w$, we know $u\in V_w$ if and only if $I_{\Phi}(u)\supset I_{\Phi}(w)$. But $I_{\Phi}(w)\subset(\Phi')^+$. Therefore, $I_{\Phi}(u)\supset I_{\Phi}(w)$ if and only if $$I_{\Phi'}(u|_{\Phi'})=I_{\Phi}(u)\cap (\Phi')^+\supset I_{\Phi}(w)\cap (\Phi')^+=I_{\Phi'}(w|_{\Phi'}),$$ 
which is equivalent to $u|_{\Phi'}\geq w|_{\Phi'}$. By Lemma~\ref{lem:notdepend}, where $\Ind(u')=\{u\in W(\Phi):u|_{\Phi'}=u'\}$ for $u'\in W(\Phi')$,
\begin{align*}
F(V_w)=&\sum_{u\geq w}q^{\ell(u)}=\sum_{u'\geq w|_{\Phi'}}\sum_{u\in\Ind(u')}q^{\ell(u)}\\
=&\sum_{u'\geq w|_{\Phi'}}q^{\ell(u')}\sum_{u\in\Ind(u')}q^{\ell(u)-\ell(u')}\\
=&\sum_{u'\geq w|_{\Phi'}}q^{\ell(u')}\frac{F(W(\Phi))}{F(W(\Phi'))}\\
=&F(V_{w|_{\Phi'}})\frac{F(W(\Phi))}{F(W(\Phi'))}.
\end{align*}
Here, by Lemma~\ref{lem:notdepend}, we see that both $F(V_w)$ and $F(\Lambda_w)$ are symmetric and unimodal. Moreover, 
\begin{align*}
    F(V_w)F(\Lambda_w)&=F(V_{w|_{\Phi'}})F(\Lambda_{w|_{\Phi'}})\frac{F(W(\Phi))}{F(W(\Phi'))}\\
    &=F(W(\Phi'))\frac{F(W(\Phi))}{F(W(\Phi'))}\\
    &=F(W(\Phi))
\end{align*}
by the induction hypothesis. In the alternative case where we instead have $\{\beta\in\Phi^+:\beta>\alpha_i\}\subset I_{\Phi}(w)$, we obtain $F(V_w)=F(V_{w|_{\Phi'}})$ and $F(\Lambda_w)=F(\Lambda_{w|_{\Phi'}})\frac{F(W(\Phi))}{F(W(\Phi'))}$, and the same conclusion can be reached.
\end{proof}

\section{Pattern avoidance} \label{sec:pattern-avoidance}

We saw in Example~\ref{ex:type-A} that separable elements in type $A_{n-1}$ are exactly those permutations avoiding the patterns 3142 and 2413.  Billey and Postnikov \cite{Billey2005Smoothness} introduced a notion of pattern avoidance in general Weyl groups.  In this section we will see that separable elements in general are characterized by pattern avoidance.

Let $\Phi$ be a root system spanning a real vector space $V$, with positive roots $\Phi^+$.  A subset $\Phi' \subset \Phi$ is a \emph{subsystem} of $\Phi$ if $\Phi'=\Phi \cap U$ for some linear subspace $U \subset V$ (this notion generalizes the \emph{parabolic} subsystems considered earlier, where $\Phi'$ is generated by a subset of the simple roots).  It is clear that any such $\Phi'$ is itself a root system.  For $w \in W(\Phi)$, we say $w$ \emph{contains the pattern} $(w',\Phi')$ if $I_{\Phi}(w)\cap U = I_{\Phi'}(w')$ (and we extend our earlier notation by writing $w|_{\Phi'}=w'$ in this case).  We say $w$ avoids $(w',\Phi')$ if it does not contain any pattern isomorphic to $(w',\Phi')$.

\begin{lemma} \label{lem:complement-pattern}
Let $w_0(\Phi)$ and $w_0(\Psi)$ denote the longest elements in $W(\Phi)$ and $W(\Psi)$ respectively, then $(w,\Phi)$ contains the pattern $(u, \Psi)$ if and only if $(w_0(\Phi)w, \Phi)$ contains $(w_0(\Psi)w, \Psi)$.
\end{lemma}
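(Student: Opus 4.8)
The plan is to reduce the statement to the complementation identity $I_{\Phi}(w_0(\Phi)w) = \Phi^+ \setminus I_{\Phi}(w)$ — already used in the proof of Proposition~\ref{prop:symmetries} — together with its analogue inside the subsystem. Recall (the standard Billey--Postnikov convention) that if $\Psi = \Phi \cap U$ is a subsystem then its positive roots are $\Psi^+ = \Phi^+ \cap U$; applying the complementation identity inside $W(\Psi)$ then gives $I_{\Psi}(w_0(\Psi)u) = \Psi^+ \setminus I_{\Psi}(u)$ for every $u \in W(\Psi)$. (Here and below I read the second pattern in the lemma statement as $(w_0(\Psi)u,\Psi)$, the complement of $u$ within $W(\Psi)$.)

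The core of the argument is a single computation. Suppose $(w,\Phi)$ contains $(u,\Psi)$, witnessed by the subspace $U$, so that $I_{\Phi}(w) \cap U = I_{\Psi}(u)$ as subsets of $\Psi^+ = \Phi^+ \cap U$. Intersecting the identity $I_{\Phi}(w_0(\Phi)w) = \Phi^+ \setminus I_{\Phi}(w)$ with $U$ yields
\[
I_{\Phi}(w_0(\Phi)w) \cap U = (\Phi^+ \cap U) \setminus (I_{\Phi}(w) \cap U) = \Psi^+ \setminus I_{\Psi}(u) = I_{\Psi}(w_0(\Psi)u),
\]
so $(w_0(\Phi)w,\Phi)$ contains $(w_0(\Psi)u,\Psi)$ via the same $U$. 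For the converse I would apply this to the pair $(w_0(\Phi)w,\Phi)$ and use $w_0(\Phi)\big(w_0(\Phi)w\big)=w$ and $w_0(\Psi)\big(w_0(\Psi)u\big)=u$; in other words, $(u,\Psi)\mapsto(w_0(\Psi)u,\Psi)$ is an involution intertwining containment in $(w,\Phi)$ with containment in $(w_0(\Phi)w,\Phi)$.

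Finally, although the lemma fixes $\Psi$, I would also record that this correspondence descends to isomorphism classes of patterns, which is the form needed for avoidance statements later: an isomorphism of root systems preserves positive roots, hence sends $w_0$ to $w_0$ and inversion sets to inversion sets, so $(u,\Psi)\cong(u',\Psi')$ implies $(w_0(\Psi)u,\Psi)\cong(w_0(\Psi')u',\Psi')$. Combined with the involution above, this gives that $(w,\Phi)$ avoids $(u,\Psi)$ if and only if $(w_0(\Phi)w,\Phi)$ avoids $(w_0(\Psi)u,\Psi)$.

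I do not expect a serious obstacle: the argument is bookkeeping with inversion sets. The two points that need care are (i) making sure the set-theoretic complement in the displayed line is taken inside the ambient set $\Psi^+ = \Phi^+ \cap U$ rather than inside $\Phi^+$ — this is exactly where the subsystem's induced positive system is used — and (ii) checking that an abstract pattern isomorphism commutes with $u\mapsto w_0 u$; both are immediate from the definitions but are the natural places for a convention slip.
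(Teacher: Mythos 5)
Your proposal is correct and follows essentially the same route as the paper: both complement inversion sets via $I_{\Phi}(w_0(\Phi)w)=\Phi^+\setminus I_{\Phi}(w)$, intersect with the subspace $U$ defining $\Psi$, and obtain the converse by applying the same computation to $w_0(\Phi)w$. Your reading of the second pattern as $(w_0(\Psi)u,\Psi)$ is the intended one, and your care about taking the complement inside $\Psi^+=\Phi^+\cap U$ matches (indeed slightly tidies) the paper's computation.
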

\begin{proof}
Let $U \subset \spn_{\R}(\Phi)$ be a linear subspace such that $\Psi=\Phi\cap U$ and $u=w|_{\Psi}$, meaning that $I_{\Psi}(u)=I_{\Phi}(w) \cap U$.  Then, 
\begin{align*}
    I_{\Psi}(w_0(\Psi)u)&=\Psi \setminus I_{\Psi}(u) \\
    &=(\Phi \cap U) \setminus (I_{\Phi}(w) \cap U) \\
    &=(\Phi \setminus I_{\Phi}(w)) \cap U \\
    &=I_{\Phi}(w_0(\Phi)w) \cap U
\end{align*}
And so $w_0(\Phi)w$ contains $w_0(\Psi)u$ as a pattern.  The converse is similar.
\end{proof}

Lemma \ref{lem:complement-pattern} is relevant to separable elements, as the forbidden patterns will be shown to be closed under left multiplication by $w_0$.  For example, notice that $4321 \cdot 3142 = 2413$.

\begin{proposition} \label{prop:characterized-by-avoidance}
If $(w,\Phi)$ is separable and $(u, \Psi)$ occurs as a pattern in $(w,\Phi)$, then $(u, \Psi)$ is separable.  Thus there exists a set $\mathcal{P}$ of patterns, independent of $W$, such that an element $w$ in any finite Weyl group $W(\Phi)$ is separable if and only if $w$ avoids all patterns in $\mathcal{P}$.
\end{proposition}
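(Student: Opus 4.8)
The plan is to deduce the second statement formally from the first, and to prove the first by induction on $\mathrm{rank}(\Phi)=\dim V$.

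\textit{Second statement from the first.} Let $\mathcal{P}$ be the set of isomorphism classes of pairs $(u,\Psi)$ that are not separable but in which every properly contained pattern is separable. Since a proper subsystem $\Psi=\Phi\cap U$ spans a subspace of $U\subsetneq V$, every proper pattern has strictly smaller rank; hence $\mathcal{P}$ is well defined and every non-separable pair contains a member of $\mathcal{P}$. Granting the first statement: a separable $w$ cannot contain any $(u,\Psi)\in\mathcal{P}$, as that pattern would be separable; and if $w$ is not separable, a non-separable pattern of $(w,\Phi)$ of minimal rank lies in $\mathcal{P}$ and is contained in $(w,\Phi)$. Thus $w$ is separable if and only if $w$ avoids $\mathcal{P}$, and $\mathcal{P}$ is manifestly independent of $W$. (By Lemma~\ref{lem:complement-pattern} and Proposition~\ref{prop:symmetries}, $\mathcal{P}$ is closed under left multiplication by $w_0$, matching the example $4321\cdot 3142=2413$.)

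\textit{First statement.} Induct on $\mathrm{rank}(\Phi)$, the case of rank $\le 1$ being trivial. If $\Phi=\bigoplus_k\Phi_k$ is reducible, a subsystem $\Psi=\Phi\cap U$ splits as $\bigoplus_k\Psi_k$ with $\Psi_k=\Phi_k\cap(U\cap\spn(\Phi_k))$ and $w|_{\Psi_k}=(w|_{\Phi_k})|_{\Psi_k}$; since each $(w|_{\Phi_k},\Phi_k)$ is separable with $\mathrm{rank}(\Phi_k)<\mathrm{rank}(\Phi)$, the inductive hypothesis and Definition~\ref{def:separable} give that $(w|_\Psi,\Psi)$ is separable. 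Suppose now $\Phi$ is irreducible, with pivot $\alpha_i$. Two lemmas are needed: (A) restriction to a standard parabolic subsystem preserves separability --- when $\alpha_i$ is deleted this is immediate from Definition~\ref{def:separable}, and otherwise $\alpha_i$ remains a pivot for the restriction, since the recursive condition on the smaller parabolic descends by induction and the inversion condition $\{\beta\in\Phi^+:\beta\ge\alpha_i\}\subseteq I_\Phi(w)$ (or $=\emptyset$) intersects correctly; iterating handles the deletion of any subset of $\Delta$. And (B) separability of a pair depends only on its isomorphism type, hence is $W(\Phi)$-equivariant, because Definition~\ref{def:separable} refers only to the inversion set and the root poset.

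With (A) and (B), fix $\Psi\subsetneq\Phi$ and induct downward on $\mathrm{rank}(\Phi)-\mathrm{rank}(\Psi)$. If $\mathrm{rank}(\Psi)<\mathrm{rank}(\Phi)-1$, choose a root $v\in\Phi$ outside $\spn(\Psi)$ (which exists as $\Phi\cap\spn(\Psi)=\Psi$) and put $\Psi'=\Phi\cap(\spn(\Psi)+\R v)$, so $\Psi\subsetneq\Psi'\subsetneq\Phi$ with $\mathrm{rank}(\Psi')=\mathrm{rank}(\Psi)+1$; then $(w|_{\Psi'},\Psi')$ is separable by the downward induction, and since $\mathrm{rank}(\Psi')<\mathrm{rank}(\Phi)$ the outer inductive hypothesis gives that $(w|_\Psi,\Psi)=((w|_{\Psi'})|_\Psi,\Psi)$ is separable. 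The remaining base case is $\Psi=\Phi\cap H$ of corank one, $H=\spn(\Psi)$ a hyperplane. Here the classification of root subsystems (Borel--de Siebenthal) identifies $\Psi$, equipped with its induced positive system $\Psi\cap\Phi^+$, as $W(\Phi)$-conjugate to a maximal standard parabolic subsystem, so $(w|_\Psi,\Psi)$ is separable by (A) and (B). The main obstacle is exactly this corank-one step: determining which subsystems can occur and checking that restriction of a separable element to each of them stays separable --- this uses the subsystem classification, and for any configurations that are not standard parabolics a direct verification. The reducible case and Lemma~(A) are, by contrast, routine consequences of the recursive form of Definition~\ref{def:separable}.
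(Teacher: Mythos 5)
Your outer scaffolding is fine: deducing the second statement from the first, the induction on rank, the reducible case, the transitivity-of-restriction/downward induction on corank, and claim (A) (standard parabolic restrictions of separable elements are separable, with the same or the deleted pivot) are all routine and correct. The gap is in the corank-one base case, which is exactly where the entire content of the proposition lives. First, Borel--de Siebenthal is not the relevant classification: it describes full-rank closed subsystems via the extended Dynkin diagram, not hyperplane sections. The statement you actually need --- that $\Psi=\Phi\cap H=\{\alpha\in\Phi:(\alpha,y)=0\}$, with $y$ spanning $H^{\perp}$, is $W(\Phi)$-conjugate to a standard parabolic subsystem $\Phi_J$, positive systems included --- is true (move $y$ into the closed fundamental chamber). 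But, fatally, conjugation moves the element along with the subsystem: if $v\in W(\Phi)$ carries $(\Psi,\Psi\cap\Phi^{+})$ onto $(\Phi_J,\Phi_J\cap\Phi^{+})$, then for $\beta=v\gamma\in\Phi_J\cap\Phi^+$ one has $\beta\in I_{\Phi}(wv^{-1})$ if and only if $\gamma\in I_{\Phi}(w)$, so the transported pair is $\big((wv^{-1})|_{\Phi_J},\Phi_J\big)$ --- a parabolic restriction of the \emph{different} element $wv^{-1}$, about which separability of $w$ says nothing, since separability is not preserved by right multiplication. Your claim (B) is true but does not help: what you would need is that $(w|_{\Psi},\Psi)$ is isomorphic, as a pair, to a parabolic pattern of some separable element, and that is precisely what is being proved. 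Type $A$ already shows the issue: a pattern sits in non-consecutive positions and is never a parabolic restriction of $w$ itself, yet closure of separable permutations under arbitrary patterns is the heart of the matter. Your closing remark that non-parabolic configurations require ``a direct verification'' concedes the problem without carrying it out.

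For contrast, the paper never conjugates. It inducts on rank using the pivot $\alpha$ of the separable element: in the case $\{\beta\geq\alpha\}\cap I_{\Phi}(w)=\emptyset$ it writes $I_{\Phi}(w)=I_{\Phi'}(w')\sqcup I_{\Phi''}(w'')$, intersects directly with $U$, and observes that $w'|_{\Phi'\cap U}$ and $w''|_{\Phi''\cap U}$ are honest patterns in the lower-rank separable elements $w',w''$, so induction applies; the opposite case $\{\beta\geq\alpha\}\subseteq I_{\Phi}(w)$ is handled by complementation via Lemma \ref{lem:complement-pattern} and Proposition \ref{prop:symmetries}. If you want to salvage your plan, you must replace the conjugation step by an argument of this kind that works with the induced positive system of $\Psi$ inside $\Phi^{+}$ directly.
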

\begin{proof}
Suppose without loss of generality that $\Phi$ is irreducible, and let $w \in W(\Phi)$ be separable with pivot $\alpha \in \Delta$.  Let $\Delta', \Delta''$ be the connected components of the Dynkin diagram on $\Delta \setminus \{\alpha\}$ so that 
\[
\Delta = \Delta' \sqcup \{\alpha\} \sqcup \Delta'',
\]
and let $\Phi', \Phi''$ be the root systems generated by $\Delta'$ and $\Delta''$ (it is possible that removing $\alpha$ leaves one, two, or three connected components in the Dynkin diagram; the other cases are exactly analogous).  Let $w'=w|_{\Phi'}$ and $w''=w|_{\Phi''}$; by the definition of separable there are two cases:
\begin{enumerate}
    \item $I_{\Phi}(w) = I_{\Phi'}(w') \sqcup \emptyset \sqcup I_{\Phi''}(w'')$ with $w',w''$ separable, or 
    \item $I_{\Phi}(w) = I_{\Phi'}(w') \sqcup \{\beta \in \Phi \: | \: \beta \geq \alpha\} \sqcup I_{\Phi''}(w'')$ with $w',w''$ separable.
\end{enumerate}
Suppose first that we are in case (1) and suppose $(u, \Phi \cap U)$ appears as a pattern in $(w, \Phi)$.  This means that 
\begin{align*}
I_{\Phi \cap U}(u)&= I_{\Phi}(w) \cap U \\
&= (I_{\Phi'}(w') \cap U) \sqcup (I_{\Phi''}(w'') \cap U) \\
&= I_{\Phi' \cap U}(w'|_{\Phi' \cap U}) \sqcup I_{\Phi'' \cap U}(w''|_{\Phi'' \cap U})
\end{align*}
Now, since $\Phi'=\Phi \cap \spn_{\R}(\Delta')$ and $\Phi''=\Phi \cap \spn_{\R}(\Delta'')$, we see that $w'|_{\Phi' \cap U}$ and $w''|_{\Phi'' \cap U}$ occur as patterns in $w'$ and $w''$ respectively.  By induction on the rank, we may assume that for $(v,\Psi)$ separable and of smaller rank than $(w,\Phi)$ any pattern occuring in $(v,\Psi)$ is also separable.  Applying this to $w'$ and $w''$ we conclude that $w'|_{\Phi' \cap U}$ and $w''|_{\Phi'' \cap U}$ are separable, and therefore that $u$ is separable.

In case (2), suppose $(u,\Phi \cap U)$ appears as a pattern in $w$.  This implies that $w_0(\Phi \cap U)u$ appears as a pattern in $w_0(\Phi)w$ by Lemma \ref{lem:complement-pattern}.  By Proposition~\ref{prop:symmetries}, $w_0(\Phi)w$ is separable because $w$ is, and furthermore this element falls under case (1), so that we may conclude that $w_0(\Phi \cap U)u$ is separable by the preceding argument.  One more application of Proposition~\ref{prop:symmetries} then implies that $u$ is separable, completing the proof.

The set $\mathcal{P}$ is the set of all minimal non-separable elements in all finite Weyl groups (under the pattern containment order).
\end{proof}

In fact, it is possible to completely classify the patterns in $\mathcal{P}$; it turns out that $\mathcal{P}$ is finite, which is not clear a priori from Proposition \ref{prop:characterized-by-avoidance}.  Remarkably, in the simply-laced case there are no new patterns to consider.

\begin{theorem} \label{thm:pattern-classification}
Let $\mathcal{P}$ be the set of minimal non-separable patterns, as in Proposition \ref{prop:characterized-by-avoidance}.  Then $\mathcal{P}$ consists of:
\begin{enumerate}[label=\roman*]
    \item The patterns of type $A_3$ given by the permutations 3142 and 2413,
    \item the two patterns of length two in type $B_2$ (see Figure \ref{fig:B2}), and
    \item the six patterns of lengths two, three, and four in type $G_2$.
\end{enumerate}
In particular, an element $(w,\Phi)$ in a root system of simply-laced type is separable if and only if it avoids the type $A_3$ patterns 3142 and 2413.
\end{theorem}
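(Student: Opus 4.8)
The plan is to establish two complementary facts and deduce the theorem from them. Fact one: each of the ten elements in the list --- the permutations 3142 and 2413 in $W(A_3)$, the two elements of length two in $W(B_2)$, and the six non-separable elements of $W(G_2)$ --- is a minimal non-separable pattern. Fact two: every element of every finite Weyl group that avoids all ten of these patterns is separable. Together these force $\mathcal{P}$ to be exactly this set of ten, since any $(v,\Psi)\in\mathcal{P}$ is non-separable, so by Fact two it contains one of the ten, and then minimality of $(v,\Psi)$ forces it to equal that one. Two preliminary observations will be used throughout. First, pattern containment is transitive: if $\Psi=\Phi\cap U_1$ and $\Psi'=\Psi\cap U_2$ then $\Psi'=\Phi\cap(U_1\cap U_2)$ and the restriction maps compose, so if $(w,\Phi)$ avoids all ten patterns then so does every restriction $w|_{\Psi}$, in particular every parabolic restriction $w|_{\Delta\setminus\{\alpha_i\}}$. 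Second, a minimal non-separable $(w,\Phi)$ must have $\Phi$ irreducible, since if $\Phi=\bigoplus_i\Phi_i$ then some $w|_{\Phi_i}$ is non-separable and is a proper pattern in $w$.

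For Fact one, I would inspect $W(A_3)$, $W(B_2)$ (equivalently $W(C_2)$), and $W(G_2)$ directly against Definition~\ref{def:separable}: the non-separable elements turn out to be exactly 3142 and 2413 in $W(A_3)$; the two length-two elements of $W(B_2)$ already drawn in Figure~\ref{fig:B2}; and six elements of $W(G_2)$, whose lengths are $2,2,3,3,4,4$. Minimality of all ten is then automatic: every proper subsystem of $A_3$ has all irreducible components of type $A_1$ or $A_2$, and every proper subsystem of $B_2$ or of $G_2$ has all components of type $A_1$ (for $G_2$ because it has rank two, so every proper subspace of its span is a line or the origin), and in all of these root systems every element is separable. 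The ten patterns are pairwise non-isomorphic, since within a fixed type distinct elements have distinct inversion sets and hence are non-isomorphic as patterns, while none of $A_3$, $B_2$, $G_2$ occurs as a subsystem of another (for reasons of root length and cardinality).

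For Fact two I would induct on the rank of $\Phi$; the rank-one case and the reducible case are immediate, the latter by restricting to the irreducible components, which again avoid the ten patterns and have smaller rank. So let $\Phi$ be irreducible. By the first preliminary observation and the inductive hypothesis, \emph{every} restriction $w|_{\Delta\setminus\{\alpha_i\}}$ obtained by deleting a single simple root is separable; hence to conclude that $w$ is separable it suffices to produce a \emph{pivot}, that is, a simple root $\alpha_i$ for which $\{\beta\in\Phi^+:\alpha_i\in\Supp(\beta)\}$ lies entirely in, or entirely out of, $I_{\Phi}(w)$. In type $A_{n-1}$ this is exactly Example~\ref{ex:type-A}: avoiding 3142 and 2413 is the same as separability. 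In type $G_2$ it is Fact one: avoiding the six $G_2$ patterns means $w$ is not one of the six non-separable elements. In types $B_n$, $C_n$, $D_n$ I would translate the pivot condition at each simple root into a statement about the signed one-line notation of $w$, generalizing the direct-sum/skew-sum dichotomy of Example~\ref{ex:type-A} with the special simple root governing sign changes, and then argue by a direct case analysis that if $w$ has no pivot at all then one can single out positions and values of $w$ realizing a 3142 or a 2413 configuration, or --- in types $B_n$ and $C_n$ only --- an order-and-sign configuration realizing one of the two $B_2$ patterns. Finally, for $F_4$, $E_6$, $E_7$, $E_8$ I would check by computer that every element avoiding the ten patterns is separable; here it suffices to examine those $w$ all of whose restrictions $w|_{\Delta\setminus\{\alpha_i\}}$ are already separable, of which there are comparatively few, so the search is feasible.

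The ``in particular'' clause follows at once: a subsystem $\Phi\cap U$ of a simply-laced $\Phi$ consists of roots of a single length, hence is itself simply-laced, hence is neither $B_2$ nor $G_2$, so in simply-laced type avoiding $\mathcal{P}$ reduces to avoiding 3142 and 2413. I expect the main obstacle to be the case analysis in types $B_n$, $C_n$, $D_n$: one must determine precisely how the simultaneous failure of the pivot condition at every simple root constrains the one-line notation, and --- in the non-simply-laced cases --- correctly recognize whether the resulting obstruction is a genuine $A_3$ pattern or one of the new $B_2$ patterns. (Type $C_n$ need not be handled separately from $B_n$: $W(C_n)=W(B_n)$, and the natural bijection of positive roots identifies the two notions of separability.)
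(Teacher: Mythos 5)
Your overall architecture matches the paper's: identify the ten patterns as minimal non-separable by direct inspection of $A_3$, $B_2$, $G_2$, then show by induction on rank that avoidance forces the existence of a pivot (the parabolic restrictions being separable by induction and transitivity of pattern containment). The easy parts of your argument --- minimality of the ten patterns, the reducible and rank-one cases, the $G_2$ case, the reduction of type $C_n$ to $B_n$, and the ``in particular'' clause for simply-laced type --- are all fine and essentially coincide with the paper. But the heart of the theorem is precisely the step you leave as ``argue by a direct case analysis'' in types $B_n$, $C_n$, $D_n$: you never actually show that the simultaneous failure of the pivot condition at every simple root produces a forbidden $A_3$ or $B_2$ pattern. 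This is where all the work in the paper lies. For the simply-laced case the paper proves a propagation lemma for $A_3$ subsystems (Lemma~\ref{lem:simply-ind-main}), then shows there is a simple root $\alpha_t$ such that all \emph{small} roots supported on $\alpha_t$ share one color (Lemma~\ref{lem:simply-step1}, itself a delicate induction through an $A_{n-1}$ chain plus the extra leaf), and finally pushes the color up the root poset by height induction using a decomposition statement for full-support roots (Lemma~\ref{lem:simply-step2}). For type $B_n$ the paper proves that each root has the same color as the small root with the same support (Lemma~\ref{lem:same-color-as-hat}) and transfers the problem to an auxiliary type-$A$ element $w_A$ which is \emph{not} a pattern of $w$, requiring a separate argument that $w_A$ still avoids $3142$ and $2413$. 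None of these ideas, nor any concrete substitute, appears in your proposal, so as written the claim that patternless elements of $B_n$, $C_n$, $D_n$ have a pivot is unsupported.

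A second, smaller gap concerns $E_6$, $E_7$, $E_8$: you propose a computer search over those $w$ all of whose maximal parabolic restrictions are separable, asserting these are ``comparatively few'' and the search feasible. You give no way to enumerate this set without traversing the Weyl group, and the paper explicitly notes that a search through all of $W(E_8)$ (order about $7\times 10^8$, with $120$ positive roots per inversion-set computation) was judged infeasible; this is exactly why the paper routes the $E$ types through the simply-laced Lemmas~\ref{lem:simply-step1} and \ref{lem:simply-step2}, so that the only machine verification needed is of Lemma~\ref{lem:simply-step2}, a statement about roots rather than Weyl group elements. If you want to keep a computational fallback for the $E$ types you would at minimum need to describe a fiberwise enumeration (e.g.\ building candidates parabolic-by-parabolic) and bound its size; as stated, both the type $B/D$ case analysis and the type $E$ verification are promissory notes rather than proofs.
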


Theorem \ref{thm:pattern-classification} is proven in Section \ref{sec:proof-of-pattern-thm}; this proof necessarily requires some type by type checking, although all simply-laced types are considered together.

\section{Proof Theorem \ref{thm:pattern-classification}} \label{sec:proof-of-pattern-thm}
The proof of Theorem \ref{thm:pattern-classification} which occupies this section is divided into several cases, depending on the type of $W(\Phi)$.  In each case we show that any element avoiding the patterns from $\mathcal{P}$ has a pivot $\alpha_i$ as in Definition \ref{def:separable}, and is thus separable by induction on rank.  Since patterns of separable elements are separable by Proposition \ref{prop:characterized-by-avoidance}, this implies that separable elements are exactly those which avoid the patterns from $\mathcal{P}$.

We keep the same setting as above: $\Phi$ is a root system with a choice of positive roots $\Phi^+$ which determines the simple roots $\Delta=\{\alpha_1,\ldots,\alpha_n\}$, and $W(\Phi)$ is the Weyl group of $\Phi$.  Recall that $\Delta$ forms a basis for the ambient vector space. 

\begin{definition}\label{def:small}
For a positive root $\alpha\in\Phi^+$, write it as the uniquely linear combination of simple roots $\alpha=\sum_{i=1}^n c_i\alpha_i$. Then the \textit{support} $\Supp(\alpha)$ of $\alpha$ is $\{\alpha_i|c_i>0\}$. In addition, we say that $\alpha$ is \textit{small}, if $c_i\in\{0,1\}$ for all $i=1,\ldots,n$.
\end{definition}

Note that all roots are small in type $A$, since $e_i-e_j=\sum_{k=i}^{j-1} (e_i-e_{i+1})$ and the roots $e_i-e_{i+1}$ are simple.

Recall the previously mentioned classical fact that the support of any positive root in an irreducible root system forms a connected subgraph of the Dynkin diagram of $\Phi$.  Notice also that all simple roots are small, and, moreover, small roots of an irreducible root system are in bijection with connected subgraphs of its Dynkin diagram (see, e.g. Chapter VI Section 1 of \cite{Bourbaki}).  

By Proposition \ref{prop:symmetries} and Lemma \ref{lem:complement-pattern}, all patterns in $\mathcal{P}$ come in pairs: $u$ and $w_0u$.  Since $I_{\Phi}(w_0w)=\Phi^+ \setminus I_{\Phi}(w)$, and since the biconvexity condition in Proposition \ref{prop:biconvex} is symmetric with respect to switching the roles of inversions and noninversions, many arguments in what follows carry through just as well with these roles reversed.  To highlight this symmetry and for conciseness, given a Weyl group element $w$, we will say that a positive root $\alpha$ is colored \textit{black} if $\alpha\in I_{\Phi}(w)$ and is colored \textit{white} if $\alpha\notin I_{\Phi}(w)$. The symmetry of swapping the roles of these two colors corresponds to complementing inversion sets, that is, to replacing $w$ with $w_0w$.  This terminology also allows us the convenience of saying that several roots \emph{have the same color}, meaning that they are all white or all black.

\subsection{The simply-laced case} \label{sec:simply-laced}
In this section, let $\Phi$ be a simply-laced irreducible root system. Let $w\in W(\Phi)$ avoid patterns 3142 and 2413. The goal is to find a pivot of $w$ (see Definition~\ref{def:separable}). Then induction will complete the proof. We do this in two steps. First, we show that all small roots (Definition~\ref{def:small}) admit a candidate pivot, in a sense to be made precise later. Next, we go up in the root poset to argue that the color of a root depends only on its support. Throughout the argument, the following lemma plays a key role.
\begin{lemma}\label{lem:simply-ind-main}
Let $\Phi$ be a simply-laced irreducible root system and let $w\in W(\Phi)$ avoid 3142 and 2413. Assume $\alpha,\beta,\gamma\in\Phi^+$ span a root system of type $A_3$ as simple roots (in this order), i.e. $(\alpha,\beta)=(\beta,\gamma)=-1$, $(\alpha,\gamma)=0$ where $(,)$ denote the inner product that comes with the root system. Then if $\beta$, $\alpha+\beta$ and $\beta+\gamma$ are colored the same (w.r.t $w$), then $\alpha+\beta+\gamma$ has the same color as well.
\end{lemma}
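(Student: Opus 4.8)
The plan is to localize the whole statement inside the rank-three subsystem cut out by $\alpha,\beta,\gamma$, where it becomes a one-line fact about the permutations $2413$ and $3142$.

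First I would set $U:=\spn_\R\{\alpha,\beta,\gamma\}$ and $\Phi':=\Phi\cap U$, so that $\Phi'$ is a genuine (Billey--Postnikov) subsystem of $\Phi$. Since $\Phi$ is simply-laced, every root of $\Phi'$ has the same length as the roots of $\Phi$, so $\Phi'$ is a reduced, crystallographic, rank-three root system all of whose roots have equal length; the only such systems are $A_1^{\oplus 3}$ (six roots), $A_2\oplus A_1$ (eight roots), and $A_3$ (twelve roots). On the other hand, the reflections $s_\alpha,s_\beta,s_\gamma$ already generate a copy of the $A_3$ root system sitting inside $\Phi'$: using the given inner products one checks that $\alpha+\beta$, $\beta+\gamma$ and $\alpha+\beta+\gamma=s_\gamma(\alpha+\beta)$ are roots, while $\alpha+\gamma$ is not, giving twelve roots in $\Phi'$. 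Hence $\Phi'\cong A_3$, its positive roots are exactly $\alpha,\beta,\gamma,\alpha+\beta,\beta+\gamma,\alpha+\beta+\gamma$, and $\alpha,\beta,\gamma$ are its simple roots in that order. Fixing the identification $W(\Phi')\cong\S_4$ given by $\alpha\mapsto e_1-e_2$, $\beta\mapsto e_2-e_3$, $\gamma\mapsto e_3-e_4$, Proposition~\ref{prop:biconvex} tells us $I_\Phi(w)\cap U$ is the inversion set of a unique $w|_{\Phi'}\in\S_4$.

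Now I would assume for contradiction that $\alpha+\beta+\gamma$ receives the color opposite to the common color of $\beta,\alpha+\beta,\beta+\gamma$. By the black/white color-swap symmetry of Section~\ref{sec:proof-of-pattern-thm} (replace $w$ by $w_0w$, which negates all colors; the avoidance hypothesis is preserved by Lemma~\ref{lem:complement-pattern} since $4321\cdot 3142=2413$), it is enough to treat the case where $\beta,\alpha+\beta,\beta+\gamma$ are black and $\alpha+\beta+\gamma$ is white. Since $I_\Phi(w)$ is biconvex and $\alpha+\beta+\gamma=(\alpha+\beta)+\gamma=\alpha+(\beta+\gamma)$ with $\alpha+\beta$ and $\beta+\gamma$ black, convexity forces $\gamma$ and $\alpha$ to be white (otherwise $\alpha+\beta+\gamma$ would be black). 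Thus $I_{\Phi'}(w|_{\Phi'})=\{\beta,\alpha+\beta,\beta+\gamma\}$, which under the identification above is precisely the inversion set of the permutation $2413$, so $(w|_{\Phi'},\Phi')$ is isomorphic to the pattern $(2413,A_3)$, contradicting the hypothesis that $w$ avoids $2413$. (The swapped case instead produces $I_{\Phi'}(w|_{\Phi'})=\{\alpha,\gamma,\alpha+\beta+\gamma\}$, the inversion set of $3142$.)

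The only step that is not a routine check is showing $\Phi\cap U$ is honestly an $A_3$ subsystem and not something larger; I expect that to be the main obstacle, and it is dispatched by the classification of rank-$\leq 3$ simply-laced root systems together with the explicit twelve roots generated by $\alpha,\beta,\gamma$. Everything after that is a two-case biconvexity argument and an identification of a small inversion set with a specific element of $\S_4$. Note that the lemma itself needs no induction — it is a self-contained local statement that will feed the induction elsewhere in Section~\ref{sec:simply-laced}.
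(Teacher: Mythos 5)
Your proof is correct and follows essentially the same route as the paper's: reduce by the color-swap symmetry to the all-black case, use biconvexity of $I_\Phi(w)$ to force $\alpha$ and $\gamma$ white, and recognize the resulting inversion set $\{\beta,\alpha+\beta,\beta+\gamma\}$ as the pattern $2413$. The only difference is that you explicitly verify (via the classification of rank-three simply-laced systems) that $\Phi\cap\spn_\R\{\alpha,\beta,\gamma\}$ is exactly the $A_3$ generated by $\alpha,\beta,\gamma$, a point the paper's proof leaves implicit; this is a welcome extra check, not a change of approach.
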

\begin{proof}
Without loss of generality assume that $\beta$, $\alpha+\beta$ and $\beta+\gamma$ are black, meaning that all of them are in $I_{\Phi}(w)$. If $\alpha$ is black, then $\alpha+(\beta+\gamma)$ is black by biconvexity. Similarly if $\gamma$ is black, then $(\alpha+\beta)+\gamma$ is black. Now we assume $\alpha$ and $\gamma$ are white. If $\alpha+\beta+\gamma$ is white as well, then $w|_{\Phi'}$ is exactly 2413, where $\Phi'$ is the root subsystem generated by $\alpha,\beta,\gamma$, which is a contradiction. Therefore in all cases, $\alpha+\beta+\gamma$ is colored black. 
\end{proof}

\begin{lemma}\label{lem:simply-step1}
Let $\Phi$ be a simply-laced irreducible root system and $w\in W(\Phi)$ avoid 3142 and 2413. Then there exists a simple root $\alpha_t\in\Delta$, such that all small roots whose supports contain $\alpha_t$ are colored the same as $\alpha_t$.
\end{lemma}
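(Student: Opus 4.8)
The plan is to recast the statement in terms of the Dynkin diagram $\Gamma$ of $\Phi$ and argue by a minimal counterexample. Recall that the small roots of an irreducible $\Phi$ correspond bijectively to the connected subgraphs (subtrees) of $\Gamma$, with a node $i$ lying in the support of a small root $\beta$ exactly when $\alpha_i$ occurs in $\beta$. So the task is to find a node $t$ of $\Gamma$ such that every subtree $R\ni t$ has $\alpha_R$ the same color (with respect to $w$) as $\alpha_t$; call such a $t$ \emph{good}. I would first suppose no node is good, so that every node $s$ has a \emph{witness}: a subtree $f(s)\ni s$ with $\alpha_{f(s)}$ colored oppositely to $\alpha_s$. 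Choose each $f(s)$ of minimal size, and fix a pair $(s_0,S)$ with $|S|$ smallest over all such pairs.

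The first real step is to determine the shape of $S$ from biconvexity (Proposition~\ref{prop:biconvex}) together with Lemma~\ref{lem:simply-ind-main}. If $S$ had two leaves $v,v'$ different from $s_0$, then $S\setminus v$, $S\setminus v'$, and $S\setminus\{v,v'\}$ are smaller subtrees still containing $s_0$, hence (by minimality) all colored like $\alpha_{s_0}$; applying Lemma~\ref{lem:simply-ind-main} to the $A_3$ spanned by $\alpha_v,\ \alpha_{S\setminus\{v,v'\}},\ \alpha_{v'}$ would force $\alpha_S$ to be that color too, a contradiction. Hence $s_0$ is a leaf of $S$; let $p$ be its unique $S$-neighbor. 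Since $\alpha_S=\alpha_{s_0}+\alpha_{S\setminus s_0}$ is colored oppositely to $\alpha_{s_0}$, biconvexity prevents $\alpha_{S\setminus s_0}$ (and therefore $\alpha_p$, as $S\setminus s_0$ is smaller) from matching $\alpha_{s_0}$. But then if $|S|>2$ the edge $\{s_0,p\}$ is a subtree of size $2<|S|$ whose two nodes have opposite colors, so $\alpha_{\{s_0,p\}}$ cannot match both, contradicting minimality of $|S|$. Thus $S=\{s_0,p\}$ is a single edge, with $\alpha_{s_0},\alpha_p$ of opposite colors and $\alpha_{s_0}+\alpha_p$ matching $\alpha_p$.

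Finally I would propagate this along $\Gamma$ to reach a contradiction. Because every node is bad, starting from $\{s_0,p\}$ one builds a walk $v_0=s_0,v_1=p,v_2,\dots$ in which each $\{v_i,v_{i+1}\}$ is a (minimal, size-two) bad witness for $v_i$; biconvexity forces the colors of the $\alpha_{v_i}$ to alternate, and the walk never immediately backtracks, since $v_{i+2}=v_i$ would make the edge $\{v_i,v_{i+1}\}$ colored both as $c_{v_i}$ and as $c_{v_{i+1}}$. In the finite tree $\Gamma$ such a nonbacktracking walk is a genuine path, so the construction must stop: some node $v^\ast$ on it has \emph{no} size-two bad witness, i.e.\ its minimal witness $R^\ast$ has $|R^\ast|\geq 3$. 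Re-running the structural analysis on $(v^\ast,R^\ast)$ and then expanding $R^\ast$ one node at a time from the edge $\{v^\ast,p^\ast\}$ (where $p^\ast\in R^\ast$ is the neighbor of $v^\ast$) up to $R^\ast$, one isolates the step at which the color of the growing subtree flips; the three roots involved span an $A_3$, and Lemma~\ref{lem:simply-ind-main} (or just the biconvexity half of Proposition~\ref{prop:biconvex}) applied there yields the contradiction, which unwinds to an explicit $3142$ or $2413$. The hard part is exactly this last step: arranging the case analysis on the Dynkin tree (which way each witness points, whether the relevant endpoint colors agree) so that the roots one writes down genuinely span a type-$A_3$ subsystem and have a color pattern forbidden by Lemma~\ref{lem:simply-ind-main}; tracking which sums of small roots are again roots — where irreducibility and the simply-laced hypothesis are used — is the delicate bookkeeping.
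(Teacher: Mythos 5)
Your opening two stages are essentially correct: for a globally minimal bad pair $(s_0,S)$, the two-leaf argument via Lemma~\ref{lem:simply-ind-main} does force $S$ to be a path ending at $s_0$, and global minimality plus biconvexity then forces $|S|=2$; the alternating non-backtracking walk and its termination at some $v^\ast$ with no size-two witness are also fine. The genuine gap is the endgame at $v^\ast$, which you explicitly defer, and it is exactly where all the content of the lemma lives. Two concrete problems. First, ``re-running the structural analysis'' on $(v^\ast,R^\ast)$ only gives you minimality over witnesses of the single vertex $v^\ast$, i.e.\ control of subtrees containing $v^\ast$; the parts of your stage two that colored $\alpha_p$ and that excluded $|S|>2$ used minimality at the \emph{other} endpoint as well, so they do not transfer. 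What does transfer is only this: $R^\ast$ is a path $v^\ast=u_0,u_1,\dots,u_k$ with $k\ge 2$, every proper prefix colored like $\alpha_{v^\ast}$, while $\alpha_{R^\ast}$, $\alpha_{u_k}$ and $\alpha_{\{u_1,\dots,u_k\}}$ have the opposite color. Second, the promised contradiction from a single $A_3$ at the color flip does not materialize. The flip necessarily occurs at the last step, and taking $\alpha=\alpha_{v^\ast}$, $\beta=\alpha_{\{u_1,\dots,u_{k-1}\}}$, $\gamma=\alpha_{u_k}$, the hypothesis of Lemma~\ref{lem:simply-ind-main} fails (here $\alpha+\beta$ and $\beta+\gamma$ have opposite colors), and the restricted element on this $A_3$ is $3214$ or $3124$ depending on the color of $\beta$ --- neither is forbidden. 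Indeed the whole local configuration at $v^\ast$ is realizable with no contradiction: in type $A_3$ the separable permutation $3214$ has $\alpha_1$ black, $\alpha_1+\alpha_2$ black, $\alpha_1+\alpha_2+\alpha_3$ white, which is precisely your terminal configuration. So the contradiction can only come from the global hypothesis that \emph{every} vertex (in particular the would-be pivot) is bad, and your sketch gives no mechanism for importing that information.

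This is exactly what the paper's proof supplies, by a different route: induction on rank, quoting the type $A$ case as known, viewing the type $D/E$ diagram as a chain plus the extra leaf $\alpha_n$ at the trivalent node $\alpha_m$, working with the full set $P$ of pivots of the restriction to the chain, and, in the hardest case, passing to the ``merged'' type $A_{n-1}$ system on $\Delta\setminus\{\alpha_m,\alpha_n\}\cup\{\alpha_m+\alpha_n\}$; its applications of Lemma~\ref{lem:simply-ind-main} combine roots near $\alpha_n$ with roots supported on far-away pivots, not roots confined to a neighborhood of one bad vertex. Note also that your minimal-counterexample framework, having no induction on rank, would have to reprove the type $A$ statement from scratch. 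As written, the proposal is an honest reduction to a configuration at $v^\ast$, but the decisive argument is missing, and the specific tool you name for it does not suffice.
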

\begin{proof}
We proceed by induction on the rank of $\Phi$. The case that $\Phi$ is of type $A$ is well-known (see e.g. \cite{Wei2012Product}). Now suppose $\Phi$ is of type $D$ or type $E$ of rank $n$. In all cases, the Dynkin diagram of $\Phi$ can be viewed as a chain of length $n-1$ (a root subsystem of type $A_{n-1}$), together with another leaf $\alpha_n$. Let $\Phi'$ be the root subsystem generated by $\alpha_1,\ldots,\alpha_{n-1}$, which is irreducible of type $A_{n-1}$. By the induction hypothesis, $w|_{\Phi'}$ avoids 3142 and 2413 so it is separable. Let $P=\{\alpha_{j_1},\ldots,\alpha_{j_k}\}$, $k\geq1$, be the set of all possible pivots of $w|_{\Phi'}$. Since these pivots must have the same sign as $\alpha_1+\cdots+\alpha_{n-1}$, let us assume that they are all colored black. 

Let $\alpha_m$ be the simple root that is connected to $\alpha_n$ in the Dynkin diagram. We know that $\alpha_m$ is the unique vertex in the Dynkin diagram with valence 3.

If $\alpha_n$ is colored black, then we claim that all small roots supported on $\alpha_{j}$ are colored black, for any $\alpha_j\in P$. We already know that all small roots supported on $\alpha_{j}$ but not on $\alpha_n$ are black, and for those small roots $\alpha$ supported on both $\alpha_{j}$ and $\alpha_n$, we see that $\alpha-\alpha_n$ is a small root supported on $\alpha_{j}$ but not on $\alpha_n$, which is black. By biconvexity, $\alpha=(\alpha-\alpha_n)+\alpha_n$ must be black. From now on, assume that $\alpha_n$ is colored white. The data we get so far can be seen in Figure~\ref{fig:simply-step1-topwhite}.
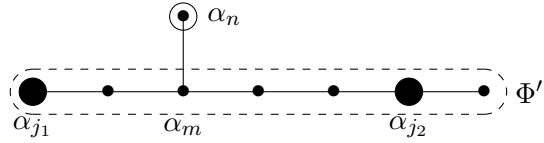
\begin{figure}[ht]
\centering
\begin{tikzpicture}[scale=1.0]
\node[draw,shape=circle,fill=black,scale=1.0,label=below:{$\alpha_{j_1}$}] at (0,0) {};
\node at (1,0) {$\bullet$};
\node[label=below:{$\alpha_m$}] at (2,0) {$\bullet$};
\node at (3,0) {$\bullet$};
\node at (4,0) {$\bullet$};
\node[draw,shape=circle,fill=black,scale=1.0,label=below:{$\alpha_{j_2}$}] at (5,0) {};
\node at (6,0) {$\bullet$};
\node[draw,shape=circle,fill=white,scale=1.0,label=right:{$\alpha_{n}$}] at (2,1) {};
\node at (2,1) {$\bullet$};
\draw(0,0)--(6,0);
\draw(2,1)--(2,0);
\draw[dashed](0,0.3)arc(90:270:0.3);
\draw[dashed](6,-0.3)arc(-90:90:0.3);
\draw[dashed](0,0.3)--(6,0.3);
\draw[dashed](0,-0.3)--(6,-0.3);
\node at (6.6,0) {$\Phi'$};
\end{tikzpicture}
\caption{A simply-laced irreducible root system with some coloring}
\label{fig:simply-step1-topwhite}
\end{figure}

Since the Dynkin diagram is a tree, for any two simple roots $\alpha_i$ and $\alpha_j$, there exists a unique path that connects them. Write $r(\alpha_i,\alpha_j)=r(\alpha_j,\alpha_i)$ as the unique small root whose support is exactly this path.

If $r(\alpha_{j},\alpha_n)$ is black, for some $\alpha_j\in P$, then we claim that all small roots supported on $\alpha_j$ are black. Again, it suffices to restrict our attention to those small roots supported on both $\alpha_n$ and $\alpha_j$. Without loss of generality, assume that $\alpha_j$ is on the right side of $\alpha_m$ (or is $\alpha_m$). If the small root of interest is of the form $r(\alpha_j,\alpha_n)+\gamma$, where $\gamma$ is a small root corresponding to a path on the right of $\alpha_j$, then we can let $\alpha=r(\alpha_j,\alpha_n)-\alpha_j$, $\beta=\alpha_j$. In this setting, $\alpha,\beta,\gamma$ form the simple roots of a type $A_3$ subsystem. Moreover, $\beta$ and $\beta+\alpha_j$ are black since $\beta$ is a pivot in $\Phi'$ and $\alpha+\beta$ is black by assumption. The conditions of Lemma~\ref{lem:simply-ind-main} are satisfied, and thus $\alpha+\beta+\gamma=r(\alpha_j,\alpha_n)+\gamma$ is black. Similarly, if the small root of interest is of the form $r(\alpha_j,\alpha_n)+\alpha$, where $\alpha$ is a small root whose support is a path to the left of $\alpha_m$, then we can let $\beta=r(\alpha_j,\alpha_n)-\alpha_n$ and $\gamma=\alpha_n$. Again, conditions of Lemma~\ref{lem:simply-ind-main} are satisfied, so $\alpha+\beta+\gamma=r(\alpha_j,\alpha_n)+\alpha$ is black. These two cases are shown in Figure~\ref{fig:rblack}.
\begin{figure}[ht]
\centering
\begin{tikzpicture}[scale=1.0]
\node at (0,0) {$\bullet$};
\node at (1,0) {$\bullet$};
\node at (2,0) {$\bullet$};
\node at (3,0) {$\bullet$};
\node at (4,0) {$\bullet$};
\node[draw,shape=circle,fill=black,scale=1.0,label=below:{$\alpha_{j}$}] at (5,0) {};
\node at (6,0) {$\bullet$};
\node[draw,shape=circle,fill=white,scale=1.0] at (2,1) {};
\node at (2,1) {$\bullet$};
\draw(0,0)--(6,0);
\draw(2,1)--(2,0);

\draw[dashed](2.3,1)arc(0:180:0.3);
\draw[dashed](4,-0.3)arc(-90:90:0.3);
\draw[dashed](1.7,0)arc(180:270:0.3);
\draw[dashed](1.7,0)--(1.7,1);
\draw[dashed](2,-0.3)--(4,-0.3);
\draw[dashed](2.3,1)--(2.3,0.3)--(4,0.3);
\draw[dashed](5.3,0)arc(0:360:0.3);
\draw[dashed](6.3,0)arc(0:360:0.3);
\node at (2.5,0.5) {$\alpha$};
\node at (5,0.5) {$\beta$};
\node at (6,0.5) {$\gamma$};
\end{tikzpicture}
\begin{tikzpicture}[scale=1.0]
\node at (0,0) {$\bullet$};
\node at (1,0) {$\bullet$};
\node at (2,0) {$\bullet$};
\node at (3,0) {$\bullet$};
\node at (4,0) {$\bullet$};
\node[draw,shape=circle,fill=black,scale=1.0,label=below:{$\alpha_{j}$}] at (5,0) {};
\node at (6,0) {$\bullet$};
\node[draw,shape=circle,fill=white,scale=1.0] at (2,1) {};
\node at (2,1) {$\bullet$};
\draw(0,0)--(6,0);
\draw(2,1)--(2,0);

\draw[dashed](0,0.3)arc(90:270:0.3);
\draw[dashed](1,-0.3)arc(-90:90:0.3);
\draw[dashed](2,0.3)arc(90:270:0.3);
\draw[dashed](5,-0.3)arc(-90:90:0.3);
\draw[dashed](2.3,1)arc(0:360:0.3);
\draw[dashed](0,0.3)--(1,0.3);
\draw[dashed](0,-0.3)--(1,-0.3);
\draw[dashed](2,0.3)--(5,0.3);
\draw[dashed](2,-0.3)--(5,-0.3);
\node at (0.5,0.5){$\alpha$};
\node at (2.5,1){$\gamma$};
\node at (3.5,0.4){$\beta$};
\end{tikzpicture}
\caption{The usage of Lemma~\ref{lem:simply-ind-main} in the case where some $r(\alpha_j,\alpha_n)$ is black.}
\label{fig:rblack}
\end{figure}
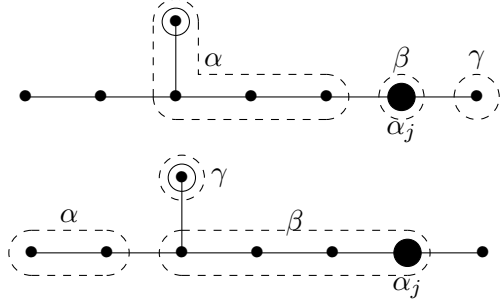
Finally, in general, if the small root of interest has the form $\alpha+r(\alpha_j,\alpha_n)+\gamma$, where $\alpha$ corresponds to a path to the left of $\alpha_m$ and $\gamma$ corresponds to a path to the right of $\alpha_j$, we can let $\beta=r(\alpha_j,\alpha_n)$ and apply Lemma~\ref{lem:simply-ind-main} in the same way. As a result, all small roots supported on $\alpha_j$ are black.

The final remaining case allows us to assume that $r(\alpha_j,\alpha_n)$ is white for all $\alpha_j\in P$. We first notice that $\alpha_n+\alpha_m$ is white. If not, then $\alpha_m\notin P$ and take any $\alpha_j\in P$ and assume without loss of generality that $\alpha_j$ is on the right of $\alpha_m$. Let $\alpha_k$ be the simple root that is directly on the right side of $\alpha_m$. Then $r(\alpha_j,\alpha_n)=(\alpha_n+\alpha_m)+r(\alpha_k,\alpha_j)$, which must be black, a contradiction. This means $\alpha_n+\alpha_m$ is white. Let $\Phi''$ be the root subsystem of type $A_{n-1}$, that consists of $\Delta'':=\Delta\setminus\{\alpha_m,\alpha_n\}\cup\{\alpha+n+\alpha_m\}$ as a base. Intuitively, we merge $\alpha_n$ and $\alpha_m$ together. By induction hypothesis, $w|_{\Phi''}$ avoids 3142 and 2413 so it is separable. We claim that its pivot is white. It suffices to show that the union of supports of all white roots of $\Phi''$ is $\Delta''$. Then it suffices to show that for any black (simple) root in $\Delta''$, it is smaller than some white root in $\Phi''$. Take such a black root in $\Delta''$, as $\alpha_n+\alpha_m$ is white, we can assume that such a root is $\alpha_i\in\Delta\setminus\{\alpha_n,\alpha_m\}$. If $\alpha_i\in P$, then $r(\alpha_i,\alpha_n)$ is white and is supported on $\alpha_i$. If $\alpha_i\notin P$, by definition of $P$, there exists a white root $r$ in $\Phi'$ that is supported on $\alpha_i$. Either $r$ is a root in $\Phi''$ or $r+\alpha_n$ is a root in $\Phi''$ and it is white in both cases. As a result, we know that the pivot of $\Phi''$ is white.

If we know that $\alpha_n+\alpha_m$ is a pivot of $\Phi''$, then every small root of $\Phi$ supported on $\alpha_n$ is either $\alpha_n$ itself, or some small root of $\Phi''$, which are all white. So in the end, we are going to derive a contradiction, assuming $\alpha_n+\alpha_m$ is not a pivot of $\Phi''$. Let $\alpha_k$ be a white pivot of $\Phi''$, that lies somewhere on the right side of $\alpha_m$, without loss of generality. By definition of $P$, for any $\alpha_j\in P$, roots $r(\alpha_j,\alpha_k)$ are black, so $r(\alpha_j,\alpha_k)$ cannot be a positive root of $\Phi''$, meaning that $\alpha_j$ lies weakly to the left of $\alpha_m$. See Figure~\ref{fig:rwhite} for a possible scenario. 
\begin{figure}[ht]
\centering
\begin{tikzpicture}[scale=1.0]
\node at (0,0) {$\bullet$};
\node[draw,shape=circle,fill=black,scale=1.0,label=below:{$\alpha_{j_1}$}] at (1,0) {};
\node[draw,shape=circle,fill=black,scale=1.0,label=below:{$\alpha_{j_2}$}] at (2,0) {};
\node at (3,0) {$\bullet$};
\node at (4,0) {$\bullet$};
\node[draw,shape=circle,fill=white,scale=1.0,label=below:{$\alpha_{k}$}] at (5,0) {};
\node at (5,0) {$\bullet$};
\node at (6,0) {$\bullet$};
\node[draw,shape=circle,fill=white,scale=1.0,label=right:{$\alpha_n$}] at (2,1) {};
\node at (2,1) {$\bullet$};
\draw(0,0)--(6,0);
\draw(2,1)--(2,0);

\draw[dashed](2.3,1)arc(0:180:0.3);
\draw[dashed](1.7,0)arc(180:360:0.3);
\draw[dashed](1.7,0)--(1.7,1);
\draw[dashed](2.3,0)--(2.3,1);
\node at (3,0.5){$\alpha_n+\alpha_m$};
\end{tikzpicture}
\caption{The case where $r(\alpha_j,\alpha_n)$ are white for all $\alpha_j\in P$.}
\label{fig:rwhite}
\end{figure}
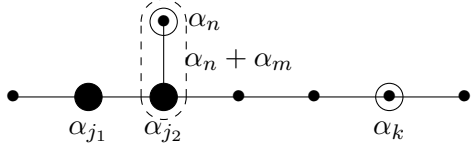

If $\alpha_n+\alpha_m$ is not a pivot for $w|_{\Phi''}$, then there exists $\alpha_q\in\Delta\setminus\{\alpha_n,\alpha_m\}$ such that $r(\alpha_n,\alpha_q)$ is black. We know from assumption that $\alpha_q\notin P$. Moreover, $\alpha_q$ must lie strictly to the left of $\alpha_k$ as $\alpha_k$ is a white pivot of $\Phi''$. We have the following two cases:

\noindent\textbf{Case 1:} $\alpha_q$ is on the left of $\alpha_m$. Take any $\alpha_j\in P$. If $\alpha_j$ is not in the support of $r(\alpha_n,\alpha_q)$, or in other words, $\alpha_j$ is on the left of $\alpha_q$, then $r(\alpha_j,\alpha_q)-\alpha_q$ is a black small root. But $r(\alpha_j,\alpha_n)=(r(\alpha_j,\alpha_q)-\alpha_q)+r(\alpha_n,\alpha_q)$ is black, a contradiction. This means $\alpha_j$ lies between $\alpha_q$ and $\alpha_m$. Let $\beta=r(\alpha_q,\alpha_m)$, $\alpha=\alpha_n$ and $\gamma=r(\alpha_m,\alpha_k)-\alpha_m$ (Figure~\ref{fig:rwhite-qleft}). Then $\alpha,\beta,\gamma$, as simple roots, span a root subsystem of type $A_3$. Moreover, $\alpha+\beta=r(\alpha_q,\alpha_n)$ is black, $\beta$ and $\beta+\gamma$ are black because they are in $\Phi'$ and are supported on $\alpha_j$. This means $\alpha+\beta+\gamma$ is black by Lemma~\ref{lem:simply-ind-main}, contradicting the fact that $\alpha_k$ is a white pivot of $\Phi''$.
\begin{figure}[ht]
\centering
\begin{tikzpicture}[scale=1.0]
\node at (0,0) {$\bullet$};
\node at (0,-0.45) {$\alpha_q$};
\node[draw,shape=circle,fill=black,scale=1.0,label=below:{$\alpha_{j}$}] at (1,0) {};
\node at (2,0) {$\bullet$};
\node at (2,-0.45) {$\alpha_m$};
\node at (3,0) {$\bullet$};
\node at (4,0) {$\bullet$};
\node[draw,shape=circle,fill=white,scale=1.0,label=below:{$\alpha_{k}$}] at (5,0) {};
\node at (5,0) {$\bullet$};
\node at (6,0) {$\bullet$};
\node[draw,shape=circle,fill=white,scale=1.0,label=right:{$\alpha_n$}] at (2,1) {};
\node at (2,1) {$\bullet$};
\draw(0,0)--(6,0);
\draw(2,1)--(2,0);

\draw[dashed](0,0.3)arc(90:270:0.3);
\draw[dashed](2,-0.3)arc(-90:90:0.3);
\draw[dashed](0,0.3)--(2,0.3);
\draw[dashed](0,-0.3)--(2,-0.3);
\draw[dashed](2.3,1)arc(0:360:0.3);
\draw[dashed](3,0.3)arc(90:270:0.3);
\draw[dashed](5,-0.3)arc(-90:90:0.3);
\draw[dashed](3,0.3)--(5,0.3);
\draw[dashed](3,-0.3)--(5,-0.3);
\node at (0.5,0.5) {$\beta$};
\node at (1.5,1) {$\alpha$};
\node at (4,0.5) {$\gamma$};
\end{tikzpicture}
\caption{The case where $\alpha_q$ is on the left of $\alpha_m$, and $r(\alpha_j,\alpha_n)$ are white for all $\alpha_j\in P$.}
\label{fig:rwhite-qleft}
\end{figure}

\noindent\textbf{Case 2:} $\alpha_q$ is on the right of $\alpha_m$. We already know that $\alpha_q$ is on the left of $\alpha_k$. Similarly as above, take any $\alpha_j\in P$. Let $\alpha=\alpha_n$, $\beta=r(\alpha_j,\alpha_q)$, $\gamma=r(\alpha_q,\alpha_k)-\alpha_q$. We see that $\beta$ and $\beta+\gamma$ are black since they are small roots in $\Phi'$ supported on $\alpha_j$, and $\alpha+\beta$ is black since it equals $(r(\alpha_j,\alpha_m)-\alpha_m)+r(\alpha_n,\alpha_q)$, which is either $r(\alpha_n,\alpha_q)$ or the sum of two black roots. By Lemma~\ref{lem:simply-ind-main}, $\alpha+\beta+\gamma$ is black, contradicting $\alpha_k$ being the white pivot in $\Phi''$. This case is shown in Figure~\ref{fig:rwhite-qright}.
\begin{figure}[ht]
\centering
\begin{tikzpicture}[scale=1.0]
\node at (0,0) {$\bullet$};
\node at (3,-0.45) {$\alpha_q$};
\node[draw,shape=circle,fill=black,scale=1.0,label=below:{$\alpha_{j}$}] at (1,0) {};
\node at (2,0) {$\bullet$};
\node at (2,-0.45) {$\alpha_m$};
\node at (3,0) {$\bullet$};
\node at (4,0) {$\bullet$};
\node[draw,shape=circle,fill=white,scale=1.0,label=below:{$\alpha_{k}$}] at (5,0) {};
\node at (5,0) {$\bullet$};
\node at (6,0) {$\bullet$};
\node[draw,shape=circle,fill=white,scale=1.0,label=right:{$\alpha_n$}] at (2,1) {};
\node at (2,1) {$\bullet$};
\draw(0,0)--(6,0);
\draw(2,1)--(2,0);

\draw[dashed](1,0.3)arc(90:270:0.3);
\draw[dashed](3,-0.3)arc(-90:90:0.3);
\draw[dashed](1,0.3)--(3,0.3);
\draw[dashed](1,-0.3)--(3,-0.3);
\draw[dashed](2.3,1)arc(0:360:0.3);
\draw[dashed](4,0.3)arc(90:270:0.3);
\draw[dashed](5,-0.3)arc(-90:90:0.3);
\draw[dashed](4,0.3)--(5,0.3);
\draw[dashed](4,-0.3)--(5,-0.3);
\node at (0.5,0.5) {$\beta$};
\node at (1.5,1) {$\alpha$};
\node at (4.5,0.5) {$\gamma$};
\end{tikzpicture}
\caption{The case where $\alpha_q$ is on the right of $\alpha_m$, and $r(\alpha_j,\alpha_n)$ are white for all $\alpha_j\in P$.}
\label{fig:rwhite-qright}
\end{figure}
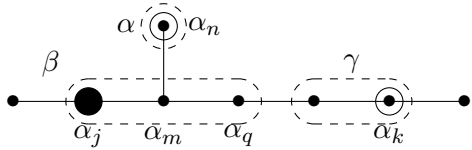
All cases are finished.
\end{proof}

Lemma~\ref{lem:simply-step1} deals with all small roots (Definition~\ref{def:small}). From there on, we will utilize the following lemma to show by induction that the color of a root depends only on its support.
\begin{lemma}\label{lem:simply-step2}
Let $\Phi$ be a simply laced irreducible root system that is not of type $A$. Let $r\in\Phi^+$ be a root of full support, i.e. $\Supp(r)=\Delta$, and let $\alpha_i\in\Delta$ be an arbitrary simple root. Then at least one of the following is true:
\begin{enumerate}
    \item $r=\beta_1+\beta_2$, where $\beta_1,\beta_2\in\Phi^+$ whose support contain $\alpha_i$;
    \item $r=\alpha+\beta+\gamma$, where $\alpha,\beta,\gamma\in\Phi^+$, $(\alpha,\beta)=(\beta,\gamma)=-1$, $(\alpha,\gamma)=0$ (as in Lemma~\ref{lem:simply-ind-main}), and $\alpha_i\in\Supp(\beta)$. 
\end{enumerate}
\end{lemma}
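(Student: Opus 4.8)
The plan is to argue according to the coefficient $c_i$ of $\alpha_i$ in the expansion $r=\sum_j c_j\alpha_j$, using two standard facts about a simply-laced system, normalized so that $(\alpha,\alpha)=2$ for each root: if $r$ is not simple then $(r,\alpha_j)=2c_j-\sum_{\alpha_k\sim\alpha_j}c_k\in\{-1,0,1\}$, and $r-\alpha_j\in\Phi^+$ exactly when this equals $1$; and if $\mu,\nu\in\Phi^+$ with $(\mu,\nu)=-1$ then $\mu+\nu\in\Phi^+$. I will also use that the Dynkin diagram of an irreducible simply-laced system not of type $A$ is a tree with a unique trivalent node and three legs, the shortest of which is a single leaf, and the classical description of positive roots by their (necessarily connected) supports. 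The dichotomy is forced by the observation that conclusion (1) requires $c_i\geq 2$ (if $r=\beta_1+\beta_2$ with $\alpha_i$ in both supports then $c_i\geq 1+1$): so for $c_i\geq 2$ I aim for a decomposition of the form (1), and for $c_i=1$ for a configuration of the form (2).

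When $c_i\geq 2$, the goal is to peel off a positive root $\beta_1$ with $\alpha_i\in\Supp\beta_1$ such that $r-\beta_1\in\Phi^+$; then $c_i(\beta_1)<c_i$ forces $\alpha_i\in\Supp(r-\beta_1)$ as well, and $(\beta_1,\,r-\beta_1)$ is the pair we want. The easiest case is $(r,\alpha_i)=1$, where one may take $\beta_1=r-\alpha_i$ and $\beta_2=\alpha_i$ (and $\Supp\alpha_i=\{\alpha_i\}$); when $(r,\alpha_i)\in\{0,-1\}$ one peels off a suitable longer root through $\alpha_i$. In type $D_n$ I will simply exhibit the splitting in the $\pm e_a\pm e_b$ model: the full-support roots are exactly $e_1+e_j$ for $2\leq j\leq n-1$, the coefficient $c_i$ equals $2$ precisely when $j\leq i\leq n-2$, and then $e_1+e_j=(e_1-e_{i+1})+(e_j+e_{i+1})$ has $\alpha_i$ in both supports.

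When $c_i=1$, conclusion (2) is equivalent --- via Lemma~\ref{lem:simply-ind-main} and the facts above --- to finding perpendicular positive roots $\mu_1,\mu_3$ with $r-\mu_1,\ r-\mu_3,\ r-\mu_1-\mu_3$ all in $\Phi^+$ and $\alpha_i\in\Supp(r-\mu_1-\mu_3)$: one sets $\beta=r-\mu_1-\mu_3$ and $\{\alpha,\gamma\}=\{\mu_1,\mu_3\}$, the relations $(\alpha,\beta)=(\beta,\gamma)=-1$ following because $r-\mu_1,r-\mu_3\in\Phi^+$ forces $(\mu_1,r)=(\mu_3,r)=1$. Put differently, $r$ must be the highest root $\alpha+\beta+\gamma$ of an $A_3$-subsystem whose middle simple root $\beta$ contains $\alpha_i$ in its support. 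In type $D_n$: for $i<j$ use $e_1+e_j=(e_j-e_c)+(e_1-e_j)+(e_j+e_c)$ with $j<c\leq n$ (middle root $e_1-e_j$); for $\alpha_i=\alpha_{n-1}$ use $e_1+e_j=(e_1-e_p)+(e_p-e_n)+(e_j+e_n)$; and for $\alpha_i=\alpha_n$ use $e_1+e_j=(e_1-e_p)+(e_p+e_n)+(e_j-e_n)$, with $p\in\{2,\dots,n-1\}\setminus\{j\}$ chosen so that all three summands are positive; a short computation of inner products verifies the $A_3$-configuration and the support condition in each case, and these choices exhaust the simple roots with $c_i=1$.

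Finally, for $E_6,E_7,E_8$ (and watching small cases such as $D_4$, where some of the index ranges above degenerate and one must pick the summands a little differently) there are only finitely many full-support roots --- those in the root-poset interval between the small full-support root $\sum_{\alpha\in\Delta}\alpha$ and the highest root --- and the coefficient constraint $(r,\alpha_j)\in\{-1,0,1\}$ pins down the possible coefficient patterns tightly, so (1) or (2) can be checked by a direct finite computation carried out by similar arguments for the three diagrams. I expect this exceptional-type bookkeeping to be the main obstacle: for each full-support root and each of the up to eight simple roots one has to produce either the two-term splitting or the $A_3$-subsystem while keeping track of positivity of the summands and the support conditions, and the $c_i=1$ subcases with $\alpha_i$ a leaf of $E_7$ or $E_8$ appear the most delicate.
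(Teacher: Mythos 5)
Your proposal is correct and takes essentially the same route as the paper: an explicit case analysis in type $D_n$ split according to whether the coefficient of $\alpha_i$ in $r$ is $2$ (giving a two-term splitting as in (1)) or $1$ (giving an $A_3$-configuration as in (2)), which matches the paper's Cases 1--3 written in the $\pm e_a\pm e_b$ coordinates instead of simple-root labels, with the exceptional types $E_6,E_7,E_8$ left to a finite verification, exactly as the paper checks them by computer. Your explicit $D_n$ decompositions do verify for all $n\ge 4$, including $D_4$, so the hedge about degenerate index ranges there is unnecessary.
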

\begin{proof}
The proof is case by case. For the exceptional types $E_6$, $E_7$ or $E_8$, the claim has been verified by computer. If $\Phi$ is of type $D_n$, $n\geq4$, we do a case check as follows. Let $\Delta=\{\alpha_1,\ldots,\alpha_n\}$, where $\alpha_1$ is the unique vertex in the Dynkin diagram of valence 3, whose three branches consist of $\{\alpha_2,\ldots,\alpha_{n-2}\}$, $\{\alpha_{n-1}\}$ and $\{\alpha_n\}$. See Figure~\ref{fig:typeD}.
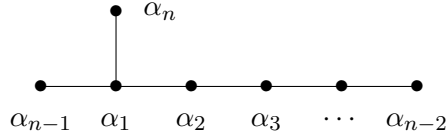
\begin{figure}[ht]
\centering
\begin{tikzpicture}
\draw(0,0)--(5,0);
\draw(1,0)--(1,1);
\node[label=below:{$\alpha_{n-1}$}] at (0,0)
{$\bullet$};
\node[label=below:{$\alpha_{1}$}] at (1,0)
{$\bullet$};
\node[label=below:{$\alpha_{2}$}] at (2,0)
{$\bullet$};
\node[label=below:{$\alpha_{3}$}] at (3,0)
{$\bullet$};
\node[label=below:{$\cdots$}] at (4,0)
{$\bullet$};
\node[label=below:{$\alpha_{n-2}$}] at (5,0)
{$\bullet$};
\node[label=right:{$\alpha_{n}$}] at (1,1)
{$\bullet$};
\end{tikzpicture}
\caption{A labeling for the Dynkin diagram of type $D_n$}
\label{fig:typeD}
\end{figure}

Let $r_0=\alpha_1+\cdots+\alpha_n$, which is small. If $r=r_0$, then if $\alpha_i=\alpha_n$ (or similarly $\alpha_{n-1}$), take $\alpha=\alpha_{n-1}$, $\beta=\alpha_1+\alpha_n$, $\gamma=\alpha_2+\cdots+\alpha_{n-2}$, which satisfy (2). If $\alpha_i\in\{\alpha_1,\ldots,\alpha_{n-2}\}$, take $\alpha=\alpha_n$, $\beta=\alpha_1+\cdots+\alpha_{n-2}$ and $\gamma=\alpha_{n-1}$ analogously. For the more general case, we can write $r=r_0+(\alpha_1+\cdots+\alpha_k)$, where $k\leq n-3$.

\noindent\textbf{Case 1:} $\alpha_i\in\{\alpha_1,\ldots,\alpha_k\}$. Let $\beta_1=r$ and $\beta_2=\alpha_1+\cdots+\alpha_k$ so (1) is satisfied.

\noindent\textbf{Case 2:} $\alpha_i=\alpha_n$ (or similarly $\alpha_{n-1}$). Let $\alpha=\alpha_{n-1}+\alpha_1+\cdots+\alpha_{k}$, $\beta=\alpha_n+\alpha_1+\cdots+\alpha_{k-1}$, $\gamma=\alpha_k+\alpha_{k+1}+\cdots+\alpha_{n-2}$ so (2) is satisfied.

\noindent\textbf{Case 3:} $\alpha_i\in\{\alpha_{k+1},\ldots,\alpha_{n-2}\}$. Let $\alpha=\alpha_{n-1}+\alpha_1+\cdots+\alpha_k$, $\gamma=\alpha_n+\alpha_1+\cdots+\alpha_k$, $\beta=\alpha_{k+1}+\cdots+\alpha_{n-2}$ so (2) is satisfied.
\end{proof}
\begin{remark}
Lemma~\ref{lem:simply-step2} is not true in type $A_{n-1}$.  Taking $r$ to be the highest root $e_1-e_n$, it is easy to verify that no decomposition of the kinds described in the Lemma exists.
\end{remark}

We are now ready to finish the pattern classification for the simply-laced root systems.
\begin{proof}[Proof of Theorem~\ref{thm:pattern-classification} for irreducible simply-laced root systems]
The type $A$ case is well-known (see \cite{Wei2012Product} for example). Let $\Phi$ be an irreducible simply-laced root system that is not of type $A$. By Lemma~\ref{lem:simply-step1}, there exists a simple root $\alpha_t$ such that all small roots supported on $\alpha_t$ have the same color. Assume it is black without loss of generality. Recall that the height $\htt(\alpha)$ of a positive root $\alpha=\sum c_i\alpha_i$ is $\sum c_i$. We now use induction on $\htt(r)$ to show that if $r$ is supported on $\alpha_t$, then $r$ is black. We can assume $r$ is not small. Then $\Supp(r)$ is simply-laced and is not of type $A$. Therefore, by Lemma~\ref{lem:simply-step2}, there are two cases. First, there exists $\beta_1,\beta_2\in\Phi^+$ such that $r=\beta_1+\beta_2$ and the supports of $\beta_1,\beta_2$ both contain $\alpha_t$. By induction hypothesis, both $\beta_1$ and $\beta_2$ are black since they have strictly smaller height so by biconvexity, $r$ is black. Second, there exists $\alpha,\beta,\gamma$ which span a root subsystem of type $A_3$ such that $\alpha+\beta+\gamma=r$ and $\alpha_t\in\Supp(\beta)$. By induction hypothesis on the height, $\beta$, $\alpha+\beta$ and $\beta+\gamma$ are all black so by Lemma~\ref{lem:simply-ind-main}, $r=\alpha+\beta+\gamma$ is black as well. As a result, the inductive step goes through so we are done.
\end{proof}
\begin{remark}
The proof presented in this section partially unifies type $D$ and $E$. It is possible to provide a more direct argument for type $D$ but checking the exceptional type $E_8$ via a search through all of its Weyl group elements is computationally infeasible. We therefore take an intermediate step by Lemma~\ref{lem:simply-step1}; checking Lemma~\ref{lem:simply-step2} for the exceptional types is much more feasible since it only requires searching through the root systems and not the Weyl groups.
\end{remark}

\subsection{The non-simply-laced cases} \label{sec:not-simply-laced}

\subsubsection{Type $B_n$} \label{sec:type-B}

Let $\Phi$ denote the root system of type $B_n$; it will be convenient to argue in terms of the root poset $Q$ of $\Phi$ which is shown for $n=4$ in Figure \ref{fig:B4}.  Let $Q_A$ be the order ideal generated by $\sum_i \alpha_i$ in $Q$; that is, $Q_A$ is the induced subposet of $Q$ on all of the small roots.  This poset is isomorphic to the root poset of type $A_n$ under the map induced by $\alpha_i \mapsto e_i-e_{i+1}$.  For a root $\beta=\sum_i c_i \alpha_i$, let $\widehat{\beta}=\sum_{i: c_i \neq 0} \alpha_i$ be the small root with the same support as $\beta$.

Now suppose $w \in W(\Phi)$ avoids the patterns from Theorem \ref{thm:pattern-classification} (i) and (ii) (no patterns of type $G_2$ may occur because the ratio of the lengths of the long and short roots is $\sqrt{2}$ in type $B_n$ and $\sqrt{3}$ in type $G_2$).  We wish to show that $w$ has a pivot $\alpha \in \Delta$.

\begin{lemma} \label{lem:same-color-as-hat}
For any $\beta \in \Phi^+$, the roots $\beta, \widehat{\beta}$ have the same color.
\end{lemma}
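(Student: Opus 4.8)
The plan is to understand the structure of the type $B_n$ root poset $Q$ relative to the small roots $Q_A$, and to leverage pattern avoidance of the $B_2$ patterns to propagate colors from small roots upward and downward. The key structural observation is that every non-small positive root $\beta=\sum_i c_i\alpha_i$ in type $B_n$ has the form $e_i+e_j$ with $i<j$ (in the standard realization), and that the interval between $\widehat{\beta}$ and $\beta$ in $Q$ is controlled: one can pass from $\widehat\beta=e_i-e_j$ up to $e_i+e_j$ by a chain of roots, and at each step the relevant local configuration of roots is either a biconvexity triple ($\alpha+\beta'=\gamma$ with all three roots present) or a copy of the type $B_2$ subsystem. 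I would first record, as a sublemma, the precise list of covering relations in $Q$ and identify which triples $\{\alpha,\beta',\alpha+\beta'\}$ of positive roots occur and which four-element subsets span a $B_2$ subsystem; this is a finite, explicit check using the coordinates $e_i\pm e_j$, $e_i$.

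Next I would argue by induction on $\htt(\beta)-\htt(\widehat\beta)$ (the ``distance'' of $\beta$ above its associated small root). The base case is $\beta=\widehat\beta$, which is trivial. For the inductive step, given $\beta$ not small, I would exhibit a root $\beta'$ with the same support as $\beta$, with $\htt(\beta')<\htt(\beta)$, such that either (a) $\beta=\beta'+\delta$ with $\delta$ a positive root whose support is contained in $\Supp(\beta)$, so that biconvexity (Proposition~\ref{prop:biconvex}) forces $\beta$ to have the same color as $\beta'$ once $\beta'$ and $\delta$ are shown to agree in color with $\widehat\beta$; or (b) $\beta,\beta'$ together with two short roots sit inside a $B_2$ subsystem $\Phi''$ generated by two roots of $\Phi$, in which case pattern-avoidance of the two $B_2$ patterns (Theorem~\ref{thm:pattern-classification}(ii)), combined with the known colors of the other roots of $\Phi''$ (which lie strictly lower, hence are handled by induction or are small), pins down the color of $\beta$ to equal that of $\beta'$. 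In case (b) I must also handle the possibility that the forced color from biconvexity inside $\Phi''$ already determines things without invoking the forbidden pattern — both sub-cases give the desired conclusion, using the symmetry (swapping the roles of black and white, i.e. replacing $w$ by $w_0 w$) to avoid doing each color separately.

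The main obstacle I anticipate is the bookkeeping in case (b): I need to be sure that for every non-small $\beta$ there genuinely is a lower root $\beta'$ of the same support which is ``visible'' from $\beta$ through one such $B_2$-window, and that the two short roots completing that window have already-known colors. Concretely, writing $\beta=e_i+e_j$, the natural choice is $\beta'=e_i-e_j$ (which is small, so $\beta'=\widehat\beta$), and the $B_2$ subsystem is the one on the short roots $e_i,e_j$ and long roots $e_i\pm e_j$; but $e_i,e_j$ need not be simple, so I must check this is a bona fide subsystem generated by roots of $\Phi$ and that $e_i,e_j$ — being lower in $Q$ and of smaller support or handled by the simply-laced/height induction internal to their own supports — have determined colors. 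Once this single window suffices, the induction collapses: I do not even need a long chain, just the one comparison $\beta$ versus $\widehat\beta$ inside that $B_2$. I would therefore streamline the argument to: fix $\beta=e_i+e_j$, consider the $B_2$ on $\{e_i,e_j,e_i-e_j,e_i+e_j\}$, note $e_i,e_j$ and $e_i-e_j$ all have colors already understood (the first two by a short separate argument about short roots, $e_i-e_j$ as a small root), and conclude via Theorem~\ref{thm:pattern-classification}(ii) plus biconvexity that $e_i+e_j$ matches $e_i-e_j=\widehat\beta$ in color.
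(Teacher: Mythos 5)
Your proposal has a concrete error at its core: you misidentify $\widehat{\beta}$. For a non-small root $\beta=e_i+e_j=\alpha_i+\cdots+\alpha_{j-1}+2(\alpha_j+\cdots+\alpha_n)$, the support is $\{\alpha_i,\ldots,\alpha_n\}$, so the small root with the same support is $\widehat{\beta}=\alpha_i+\cdots+\alpha_n=e_i$, \emph{not} $e_i-e_j=\alpha_i+\cdots+\alpha_{j-1}$, whose support stops at $\alpha_{j-1}$. Consequently the statement you end up aiming at --- that $e_i+e_j$ and $e_i-e_j$ share a color --- is not the lemma, and it is in fact false for pattern-avoiding elements: already in $B_2$ the separable element with inversion set $\{\alpha_2,\alpha_1+\alpha_2,\alpha_1+2\alpha_2\}$ (see Figure~\ref{fig:B2}) has $e_1+e_2=\alpha_1+2\alpha_2$ black but $e_1-e_2=\alpha_1$ white, while $e_1+e_2$ and $\widehat{e_1+e_2}=e_1=\alpha_1+\alpha_2$ agree, as the lemma asserts.

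You did find the right window: the rank-two subsystem on $\{e_i,\,e_j,\,e_i-e_j,\,e_i+e_j\}$, which is exactly the $\Psi$ used in the paper (it is spanned by $\beta$ and $\widehat\beta$, so there is no issue about $e_i,e_j$ being non-simple). But your plan to first establish the colors of the short roots $e_i,e_j$ by ``a short separate argument'' is both unavailable at this stage (the lemma is precisely what lets one reduce all color information to the small roots) and unnecessary. The paper's argument is a direct contradiction inside this one $B_2$: assume $\beta=e_i+e_j$ and $\widehat\beta=e_i$ have different colors, say $\beta$ white and $\widehat\beta$ black; biconvexity then forces $e_j=\beta-\widehat\beta$ to be white (else $e_i+e_j$ would be black) and $e_i-e_j=2\widehat\beta-\beta$ to be black (else $e_i$ would be white), so the restriction of $w$ to this $B_2$ has exactly two inversions, which is one of the forbidden length-two $B_2$ patterns of Theorem~\ref{thm:pattern-classification}(ii). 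No induction on height, no chain of windows, and no prior knowledge of short-root colors is needed. To repair your write-up, replace $e_i-e_j$ by $e_i$ as $\widehat\beta$, drop the auxiliary claims about short roots, and run the contradiction argument above.
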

\begin{proof}
If $\beta \in Q_A$, then $\widehat{\beta}=\beta$ so the claim is trivial, so assume otherwise.  In this case 
\begin{align*}
    \beta &= \alpha_i+ \cdots \alpha_{j-1}+2(\alpha_j + \cdots + \alpha_n) \\
    \widehat{\beta} &= \alpha_i+ \cdots \alpha_{j-1}+\alpha_j + \cdots + \alpha_n
\end{align*}
for some $i\leq j < n$.  Then the subsystem spanned by $\beta,\widehat{\beta}$ is 
\[
\Psi^+ = \{\beta, \widehat{\beta}, \beta-\widehat{\beta}, 2\widehat{\beta}-\beta\} 
\]
with simple roots $\beta-\widehat{\beta}$ and $2\widehat{\beta}-\beta$.  Now suppose $\beta$ and $\widehat{\beta}$ have different colors; without loss of generality, $\beta$ is white and $\widehat{\beta}$ is black.  By biconvexity, we must have that $\beta-\widehat{\beta}$ is white and $2\widehat{\beta}-\beta$ is black.  But then $(w|_{\Psi}, \Psi)$ is a pattern of type $B_2$ with two inversions, violating our assumption that $w$ avoided such patterns.  Thus $\beta,\widehat{\beta}$ have the same color.
\end{proof}

By Lemma \ref{lem:same-color-as-hat}, the colors of the elements of $Q \setminus Q_A$ are determined by those of $Q_A$.  Thus it suffices to find $\alpha \in \Delta$ such that all elements of $\{\beta \in Q_A \: | \: \beta \geq \alpha \}$ have the same color, and this $\alpha$ will be a pivot.

Let $\Phi_A$ be the root system of type $A_n$; it is clear that the isomorphism of posets $\phi: Q_A \to \Phi_A^+$ given by $\alpha_i \mapsto e_i-e_{i+1}$ preserves the property of a set of positive roots being biconvex, so to every $u \in W(\Phi)$ there exists a unique element $u_A \in W(\Phi_A)$ such that $I_{\Phi_A}(u_A)=\phi(I_{\Phi}(u) \cap Q_A)$.  Note that $\Phi_A$ is \emph{not} a subsystem of $\Phi$ and thus $u_A$ does not occur as a pattern in $(u,\Phi)$.  However, the next lemma shows that $u_A$ does behave like a pattern in $u$ with respect to the avoidance of the type $A_3$ patterns in question.

\begin{lemma}
If $w \in W(\Phi)$ avoids the patterns from Theorem \ref{thm:pattern-classification} (i) and (ii), then so does $w_A \in W(\Phi_A)$.
\end{lemma}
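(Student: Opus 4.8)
The plan is to argue by contradiction, assuming $w_A$ contains a forbidden pattern and manufacturing a forbidden configuration inside $w$ itself. First I would dispose of the type $B_2$ patterns of Theorem~\ref{thm:pattern-classification}~(ii) cheaply: every root of $\Phi_A$ has the same length, hence so does every root of any subsystem $\Phi_A \cap U$, so no subsystem of $\Phi_A$ is isomorphic to the non-simply-laced $B_2$; thus $w_A$ automatically avoids those patterns and it remains to show $w_A$ avoids $3142$ and $2413$. So suppose $w_A$ contains one of these, say on indices $i_1 < i_2 < i_3 < i_4$ in $\{1,\dots,n+1\}$; recording which of the roots $e_{i_a}-e_{i_b}$ lie in $I_{\Phi_A}(w_A)$ gives the inversion set of $3142$ (resp.\ $2413$) in this copy of $A_3$. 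I would then split into two cases according to whether $i_4 \le n$ or $i_4 = n+1$.

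In the case $i_4 \le n$, all the roots $e_{i_a}-e_{i_b}$ are small roots of $\Phi$, $\phi$ restricts to the identity on them, and they span a genuine type-$A_3$ subsystem $\Psi := \Phi \cap \spn_{\R}\{e_{i_1}-e_{i_2},e_{i_2}-e_{i_3},e_{i_3}-e_{i_4}\}$ of $\Phi$ — one checks that the only roots of $B_n$ in that span are the $\pm(e_{i_a}-e_{i_b})$, since $e_{i_a}+e_{i_b}$ and $e_{i_a}$ have nonzero coordinate sum. Because $I_{\Phi_A}(w_A)=\phi(I_{\Phi}(w)\cap Q_A)$ and the roots in question are small, $I_{\Phi}(w)\cap\Psi^+$ equals the inversion set of $3142$ (resp.\ $2413$) in $\Psi$, so $w$ contains that pattern, contradicting our hypothesis on $w$.

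In the case $i_4 = n+1$, the map $\phi^{-1}$ sends $e_{i_a}-e_{n+1}$ to the short root $e_{i_a}$, so the pattern in $w_A$ now tells us the colors (with respect to $w$) of the six small roots $e_{i_1}-e_{i_2},\,e_{i_1}-e_{i_3},\,e_{i_2}-e_{i_3},\,e_{i_1},\,e_{i_2},\,e_{i_3}$. Here I would invoke Lemma~\ref{lem:same-color-as-hat}: since $\widehat{e_{i_1}+e_{i_2}}=e_{i_1}$ and $\widehat{e_{i_2}+e_{i_3}}=e_{i_2}$ (both have support $\{\alpha_{i_1},\dots,\alpha_n\}$, resp.\ $\{\alpha_{i_2},\dots,\alpha_n\}$), the long roots $e_{i_1}+e_{i_2}$ and $e_{i_2}+e_{i_3}$ have the same colors as $e_{i_1}$ and $e_{i_2}$ respectively. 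The identity $(e_{i_1}-e_{i_3})+(e_{i_2}+e_{i_3})=e_{i_1}+e_{i_2}$ then contradicts the biconvexity of $I_{\Phi}(w)$ (Proposition~\ref{prop:biconvex}): if $w_A$ contains $3142$ then $e_{i_1}-e_{i_3}$ and $e_{i_2}+e_{i_3}$ are noninversions while their sum $e_{i_1}+e_{i_2}$ is an inversion, violating condition (2); if $w_A$ contains $2413$ all colors are reversed, so the two summands are inversions and the sum a noninversion, violating condition (1). Either way we reach the desired contradiction.

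The main obstacle is precisely this last case: because $\Phi_A$ is not a subsystem of $\Phi$, a forbidden pattern of $w_A$ involving the ``extra'' coordinate $n+1$ does not localize to an honest subsystem of $\Phi$, so the clean argument of the first case is unavailable. The device that resolves it is Lemma~\ref{lem:same-color-as-hat}, which transfers color information from the short roots $e_i$ up to the long roots $e_i+e_j$ and thereby exposes a biconvexity violation; once one notices the relation $(e_{i_1}-e_{i_3})+(e_{i_2}+e_{i_3})=e_{i_1}+e_{i_2}$, the remainder is bookkeeping.
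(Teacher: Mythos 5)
Your proof is correct and follows essentially the same route as the paper: split on whether the largest index equals $n+1$, handle the first case by localizing to a genuine type $A_3$ subsystem of $\Phi$, and in the second case use Lemma~\ref{lem:same-color-as-hat} to transfer colors to $e_{i_1}+e_{i_2}$ and $e_{i_2}+e_{i_3}$ and derive the same biconvexity violation $(e_{i_1}-e_{i_3})+(e_{i_2}+e_{i_3})=e_{i_1}+e_{i_2}$, with $2413$ handled by color reversal and the $B_2$ patterns ruled out for $w_A$ since $\Phi_A$ is simply laced.
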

\begin{proof}
Suppose without loss of generality that $(w_A, \Phi_A)$ contains the pattern 3142 (the argument for 2413 is obtained by reversing the colors; $w_A$ can never contain the type $B_2$ patterns), but that $(w,\Phi)$ avoids all four patterns.  This means that there exist $1 \leq i<j<k<\ell \leq n+1$ such that $e_i-e_j, e_k - e_{\ell},$ and $e_i-e_{\ell}$ are inversions of $w_A$ and $e_j-e_k, e_i-e_k,$ and $e_j-e_{\ell}$ are noninversions.  If $\ell<n+1$ then all of these are contained in the type $A_{n-1}$ subsystem of both $\Phi_A$ and $\Phi$, meaning that $w$ also contains 3142, a contradiction, so assume $l=n+1$.  In this case, the definition of $w_A$ implies that $e_i-e_j, e_k,$ and $e_i$ are inversions of $w$ and $e_i-e_k, e_j-e_k,$ and $e_j$ are not.  By Lemma \ref{lem:same-color-as-hat}, this means that $e_j+e_k$ is a noninversion, since $\widehat{e_j+e_k}=e_j$ and $e_i+e_j$ is an inversion, since $\widehat{e_i+e_j}=e_i$.  But this violates biconvexity: $(e_i-e_k)+(e_j+e_k)=e_i+e_j$ but the first two are noninversions while the last is an inversion.
\end{proof}

Now, since we have assumed $(w,\Phi)$ avoids the forbidden patterns, so does $w_A$.  By the type $A$ case of the Theorem, $w_A$ has a pivot $\gamma$, and by construction (recalling Lemma \ref{lem:same-color-as-hat}) $\phi^{-1}(\gamma)$ is a pivot for $w$.  The subsystems of $\Phi$ which remain after removing this pivot are of type $A$ or of type $B_k$ for $k<n$, and so $w$ is separable as desired.

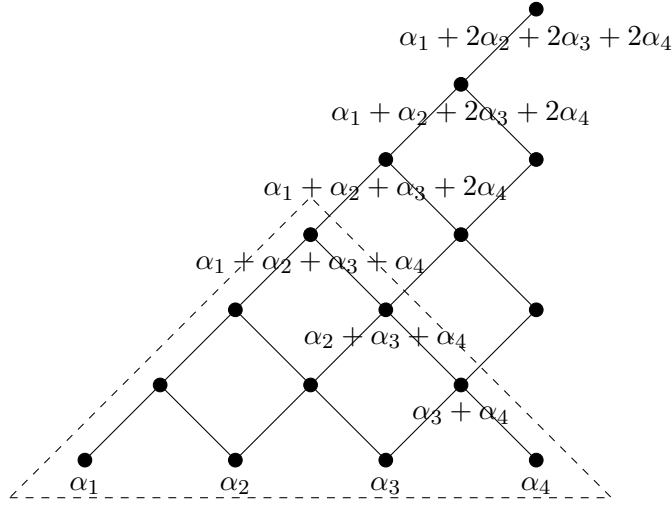
\begin{figure}[ht]
    \begin{tikzpicture}
    \node[draw,shape=circle,fill=black,scale=0.5](a)[label=below: {$\alpha_1$}] at (-4,0) {};
    \node[draw,shape=circle,fill=black,scale=0.5](b)[label=below: {$\alpha_2$}] at (-2,0) {};
    \node[draw,shape=circle,fill=black,scale=0.5](c)[label=below: {$\alpha_3$}] at (0,0) {};
    \node[draw,shape=circle,fill=black,scale=0.5](d)[label=below: {$\alpha_4$}] at (2,0) {};
    
    \node[draw,shape=circle,fill=black,scale=0.5](e) at (-3,1) {};
    \node[draw,shape=circle,fill=black,scale=0.5](f) at (-1,1) {};
    \node[draw,shape=circle,fill=black,scale=0.5](g)[label=below: {$\alpha_3+\alpha_4$}] at (1,1) {};
    
    \node[draw,shape=circle,fill=black,scale=0.5](h) at (-2,2) {};
    \node[draw,shape=circle,fill=black,scale=0.5](i)[label=below: {$\alpha_2+\alpha_3+\alpha_4$}] at (0,2) {};
    \node[draw,shape=circle,fill=black,scale=0.5](j) at (2,2) {};
    
    \node[draw,shape=circle,fill=black,scale=0.5](k) at (-1,3)[label=below: {$\alpha_1+\alpha_2+\alpha_3+\alpha_4$}] {};
    \node[draw,shape=circle,fill=black,scale=0.5](l) at (1,3) {};
    
    \node[draw,shape=circle,fill=black,scale=0.5](m)[label=below: {$\alpha_1+\alpha_2+\alpha_3+2\alpha_4$}] at (0,4) {};
    \node[draw,shape=circle,fill=black,scale=0.5](n) at (2,4) {};
    
    \node[draw,shape=circle,fill=black,scale=0.5](o)[label=below: {$\alpha_1+\alpha_2+2\alpha_3+2\alpha_4$}] at (1,5) {};
    
    \node[draw,shape=circle,fill=black,scale=0.5](p)[label=below: {$\alpha_1+2\alpha_2+2\alpha_3+2\alpha_4$}] at (2,6) {};
    
    \draw (a)--(e)--(b)--(f)--(c)--(g)--(d);
    \draw (e)--(h)--(f)--(i)--(g)--(j);
    \draw (h)--(k)--(i)--(l)--(j);
    \draw (k)--(m)--(l)--(n);
    \draw (m)--(o)--(n);
    \draw (o)--(p);
    
    \draw[dashed] (-5,-.5)--(3,-.5)--(-1,3.5)--(-5,-.5);
    
    \end{tikzpicture}
    \caption{The root poset $Q$ for type $B_4$; the type-$A_4$-like subset $Q_A$ is enclosed in dashed lines.}
    \label{fig:B4}
\end{figure}

\subsubsection{Type $C_n$}

Let $\Phi_B$ and $\Phi_C$ denote the root systems of types $B_n$ and $C_n$ respectively, viewed as subsets of the same vector space $V$; the type $C_n$ case will be proven by reduction to the type $B_n$ case, which is discussed in Section \ref{sec:type-B}.  

The roots of $\Phi_B$ and $\Phi_C$ differ only by scaling, so $W(\Phi_B)=W(\Phi_C)$ as subgroups of $GL(V)$; let $W$ denote this group.  Let $\tau: \Phi_B \to \Phi_C$ be defined by 
\begin{align*}
    \pm e_i \pm e_j &\mapsto \pm e_i \pm e_j \\
    \pm e_i &\mapsto \pm 2e_i
\end{align*}
for all $1 \leq i<j \leq n$.  It is clear from the definition that $\tau(I_{\Phi_B}(w))=I_{\Phi_C}(w)$ for $w \in W$.

Now, suppose that $(w,\Phi_C)$ avoids the patterns from Theorem \ref{thm:pattern-classification} (i) and (ii); we wish to show that $w$ has a pivot with respect to $\Phi_C$.

\begin{lemma} \label{lem:C-avoids-implies-B-avoids}
If $(w,\Phi_C)$ avoids the patterns from Theorem \ref{thm:pattern-classification} (i) and (ii) then so does $(w,\Phi_B)$.
\end{lemma}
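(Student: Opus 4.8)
The plan is to prove the contrapositive: assuming $(w,\Phi_B)$ contains one of the forbidden patterns from Theorem~\ref{thm:pattern-classification} (i) or (ii), I will produce such a pattern in $(w,\Phi_C)$. As in Section~\ref{sec:type-B}, no pattern of type $G_2$ can occur (the long/short length ratio is $\sqrt2$, not $\sqrt3$, in $C_n$), so the contained pattern is either of type $A_3$ (the permutation $3142$ or $2413$) or of type $B_2$, and I will treat these two cases separately. The guiding observation is that $\tau$ fixes every root of the form $\pm e_i\pm e_j$ and preserves the support of every root, while $\tau(I_{\Phi_B}(w))=I_{\Phi_C}(w)$.

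For the $A_3$ case, I would first note that a subsystem of $\Phi_B$ isomorphic to $A_3$ cannot involve any short root $\pm e_k$: $A_3$ is irreducible of rank $3$ with all roots of one length, whereas the short roots $\pm e_k$ are pairwise orthogonal and hence span only reducible systems. So the contained pattern $(u,\Psi)$ has $\Psi\subseteq\{\pm e_i\pm e_j\}$, a set of roots common to $\Phi_B$ and $\Phi_C$. Writing $\Psi=\Phi_B\cap U$, the subspace $U$ contains no coordinate vector $e_k$ (else $e_k\in\Psi$), so $\Phi_C\cap U=\Psi$ as well; and since $\tau$ fixes each root of $\Psi$, the equality $\tau(I_{\Phi_B}(w))=I_{\Phi_C}(w)$ gives $I_{\Phi_C}(w)\cap U=I_{\Phi_B}(w)\cap U=I_\Psi(u)$. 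Thus $(w,\Phi_C)$ contains the same pattern $(u,\Psi)$, a contradiction.

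For the $B_2$ case, the contained pattern $(u,\Psi)$ has $\ell(u)=2$ (these are the two patterns of type (ii)). The two positive short roots of $\Psi$ are distinct positive short roots $e_i,e_j$ of $\Phi_B$, and since $e_i\pm e_j$ also lie in $U=\spn_\R(\Psi)$ we get $U=\spn_\R(e_i,e_j)$ and $\Psi=\{\pm e_i,\pm e_j,\pm(e_i\pm e_j)\}$, while $\Phi_C\cap U=\{\pm 2e_i,\pm 2e_j,\pm(e_i\pm e_j)\}$ is of type $C_2$, with Weyl group isomorphic to that of $B_2$. Since $\tau$ preserves supports, $\beta\in U\iff\tau(\beta)\in U$ for every $\beta\in\Phi_B$, so $I_{\Phi_C}(w)\cap U=\tau\bigl(I_{\Phi_B}(w)\cap U\bigr)=\tau(I_\Psi(u))$, a set of size $2$. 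Hence $w|_{\Phi_C\cap U}$ is a length-two element of a Weyl group of type $B_2$, and by Example~\ref{ex:B2} (see Figure~\ref{fig:B2}) every length-two element there is one of the two forbidden $B_2$ patterns. So $(w,\Phi_C)$ again contains a forbidden pattern, completing the contrapositive.

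The step I expect to require the most care is the $B_2$ case: one must check that the subsystem $\Psi$ is necessarily the ``coordinate'' $B_2$ spanned by two standard basis vectors, that $\tau$ then restricts to a support-preserving bijection onto a $C_2$ subsystem so that the number of inversions is unchanged, and finally invoke the explicit description of $W(B_2)$ to see that having two inversions already forces a forbidden pattern. The $A_3$ case is comparatively routine once one observes that $A_3$ subsystems live entirely inside the $\{\pm e_i\pm e_j\}$ roots, which $\tau$ fixes.
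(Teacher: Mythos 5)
Your proposal is correct and follows essentially the same route as the paper: it proves the contrapositive case by case, showing that an $A_3$ pattern in $\Phi_B$ consists only of roots $\pm e_i\pm e_j$ and hence survives unchanged as $\Phi_C\cap U$, and that a forbidden $B_2$ pattern is carried by $\tau$ to a two-inversion element of a $B_2$-type subsystem of $\Phi_C$, which is automatically forbidden. The extra details you supply (why short roots cannot appear in the $A_3$ case, why $\tau$ preserves membership in $U$, and the appeal to Example~\ref{ex:B2}) are just fuller versions of steps the paper leaves implicit.
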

\begin{proof}
Let $U$ be a subspace of $V$ and suppose that $(w|_U, \Phi_B \cap U)$ is a type $A_3$ pattern.  Then all roots in $\Phi_B \cap U$ have the same length and no pair is orthogonal, so they must all be of the form $\pm e_i \pm e_j$.  Thus $\Phi_C \cap U = \Phi_B \cap U$ and $(w, \Phi_C)$ also contains the pattern.

Next suppose that $(w|_U, \Phi_B \cap U)$ is one of the forbidden type $B_2$ patterns.  Up to some immaterial signs, a type $B_2$ subsystem of $\Phi_B$ has positive roots $\Psi_B^+=\{e_i, e_j, e_i+e_j,e_i-e_j\}$ for some $i,j$.  Thus $I_{\Phi_B}(w) \cap U$ contains exactly two elements from $\Psi_B^+$.  Now, $\tau(\Psi_B^+)=\{2e_i, 2e_j, e_i+e_j, e_i-e_j\}$ is the set of positive roots $\Psi_C^+$ of a type $B_2$ subsystem of $\Phi_C$.  But then $I_{\Phi_C}(w) \cap U = \tau(I_{\Phi_B}(w) \cap U)$ also has size two, meaning that $(w,\Phi_C)$ also contains the pattern.
\end{proof}

By Lemma \ref{lem:C-avoids-implies-B-avoids} $(w,\Phi_B)$ avoids the forbidden patterns, and so by the result of Section \ref{sec:type-B}, $w$ has a pivot with respect to $\Phi_B$.  This means that for some $\alpha \in \Delta_B$ all roots in the dual order ideal $\mathcal{I}=\{ \beta \in \Phi_B \: | \: \beta \geq_B \alpha\}$ have the same color.  It is straightforward to see that $\tau(\mathcal{I})$ is the dual order ideal in the root poset of $\Phi_C$ generated by the simple root $\tau(\alpha)$, and so $w$ also has a pivot with respect to $\Phi_C$.  Thus, since the parabolic subsystems which remain after removing $\tau(\alpha)$ are of type $A$ or of type $C_k$ with $k<n$, we conclude that $w$ is separable.   

\subsubsection{Type $F_4$}
Let $\Phi$ be the root system of type $F_4$.  In order to conclude that there are no minimal non-separable elements $(w,\Phi)$, it suffices to show that any element of $W(\Phi)$ avoiding the two patterns of type $B_2$ from Theorem~\ref{thm:pattern-classification}(ii) is in fact separable.  There are 1152 elements of $W(\Phi)$ and 18 subsystems of $\Phi^+$ isomorphic to $B_2$.  It has been checked by computer that the elements $(w,\Phi)$ such that $|I_{\Phi'}(w|_{\Phi'})|\neq 2$ for all such subsystems $\Phi'$ (that is, the elements in $W(\Phi)$ avoiding the patterns in question) all have a pivot, and are thus separable.

\subsubsection{Type $G_2$}
Let $\Phi$ denote the root system of type $G_2$, with simple roots $\alpha_1$ and $\alpha_2$ (see Figure \ref{fig:G2}).  Since any separable element must have a pivot, it is clear that the only possible inversion sets $I_{\Phi}(w)$ with $(w,\Phi)$ separable are $\emptyset, \{\alpha_1\}, \{\alpha_2\}$ and their complements in $\Phi^+$.  Thus the six elements with inversions sets of sizes two, three, and four are minimal non-separable elements, agreeing with Theorem \ref{thm:pattern-classification}.

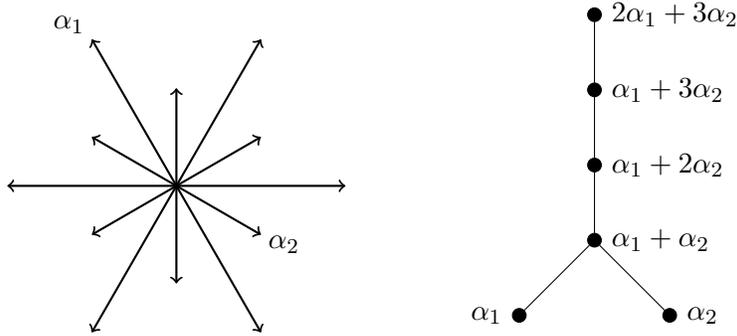
\begin{figure}[ht]
    \begin{tikzpicture}[scale=1.3]
    \draw[thick, ->] (0,0)--(1.73,0);
    \draw[thick, ->] (0,0)--(.865,1.5);
    \draw[thick, ->] (0,0)--(-.865,1.5);
    \draw[thick, ->] (0,0)--(-1.73,0);
    \draw[thick, ->] (0,0)--(-.865,-1.5);
    \draw[thick, ->] (0,0)--(.865,-1.5);
    
    \draw[thick, ->] (0,0)--(.865,.5);
    \draw[thick, ->] (0,0)--(0,1);
    \draw[thick, ->] (0,0)--(-.865,.5);
    \draw[thick, ->] (0,0)--(-.865,-.5);
    \draw[thick, ->] (0,0)--(0,-1);
    \draw[thick, ->] (0,0)--(.865,-.5);
    
    \node at (-1.1,1.65) {$\alpha_1$};
    \node at (1.1,-.6) {$\alpha_2$};
    \end{tikzpicture} \hspace{0.5in}
    \begin{tikzpicture}
    \node[draw,shape=circle,fill=black,scale=0.5](a)[label=left: {$\alpha_1$}] at (-1,0) {};
    \node[draw,shape=circle,fill=black,scale=0.5](b)[label=right: {$\alpha_2$}] at (1,0) {};
    \node[draw,shape=circle,fill=black,scale=0.5](c)[label=right: {$\alpha_1+\alpha_2$}] at (0,1) {};
    \node[draw,shape=circle,fill=black,scale=0.5](d)[label=right: {$\alpha_1+2\alpha_2$}] at (0,2) {};
    \node[draw,shape=circle,fill=black,scale=0.5](e)[label=right: {$\alpha_1+3\alpha_2$}] at (0,3) {};
    \node[draw,shape=circle,fill=black,scale=0.5](f)[label=right: {$2\alpha_1+3\alpha_2$}] at (0,4) {};
    
    \draw (a) -- (c) -- (d) -- (e) -- (f);
    \draw (b) -- (c);
    \end{tikzpicture}
    \caption{The root system of type $G_2$ and its root poset.}
    \label{fig:G2}
\end{figure}

\subsection{Completing the proof}

We now complete the proof of Theorem \ref{thm:pattern-classification}.

\begin{proof}[Proof of Theorem \ref{thm:pattern-classification}]
Let $w$ be an element in the Weyl group of an arbitrary finite root system $\Phi$, and suppose that $(w,\Phi)$ avoids the patterns from Theorem \ref{thm:pattern-classification}.  Then, decomposing $\Phi$ as a direct some of irreducible root systems $\Phi_1 \oplus \cdots \oplus \Phi_k$ it is clear that $(w|_{\Phi_i}, \Phi_i)$ also avoids these patterns for all $i$.  By the results of Sections \ref{sec:simply-laced} and \ref{sec:not-simply-laced}, each $(w|_{\Phi_i},\Phi_i)$ is therefore separable, and thus $(w,\Phi)$ is separable.  The reverse direction is given by Proposition \ref{prop:characterized-by-avoidance}.  For the claim in Theorem \ref{thm:pattern-classification} about simply laced types, just note that no element of a Weyl group of simply-laced type may contain one of the forbidden patterns of type $B_2$ or $G_2$, since all roots have the same length.
\end{proof}

\section*{Acknowledgements}
The authors wish to thank Alexander Postnikov, Victor Reiner, and Anders Bj\"{o}rner for helpful comments.

\bibliographystyle{plain}
\bibliography{arxiv-v2}
\end{document}